\DeclareRobustCommand{\SkipTocEntry}[5]{}
\definecolor{blue}{rgb}{.255,.41,.884} 
\definecolor{red}{rgb}{1, 0, 0} 
\definecolor{green}{rgb}{.196,.804,.196} 
\definecolor{yellow}{rgb}{1,.648,0} 
\definecolor{pink}{rgb}{1,0.5,0.5}
\newtheorem{theorem}{Theorem}[section]
\newtheorem{lemma}[theorem]{Lemma}
\newtheorem{proposition}[theorem]{Proposition}
\newtheorem{corollary}[theorem]{Corollary}
\theoremstyle{definition}
\newtheorem{definition}[theorem]{Definition}
\theoremstyle{remark}
\newtheorem{remark}[theorem]{Remark}
\newtheorem{problem}[theorem]{Problem}
\newcommand{\be}{\begin{equation}}
\newcommand{\ee}{\end{equation}}
\newcommand{\II}{{\rm  I\hspace{-.2mm}I}}
\newcommand{\IIo}{\hspace{0.4mm}\mathring{\rm{ I\hspace{-.2mm} I}}{\hspace{.0mm}}}
\newcommand{\iio}{\hspace{0.4mm}\mathring{{ \iota\hspace{-.41mm} \iota}}{\hspace{.0mm}}}
\newcommand{\Fo}{ {\hspace{.6mm}} \mathring{\!{ F}}{\hspace{.2mm}}}
\newcommand{\IIIo}{{\mathring{{\bf\rm I\hspace{-.2mm} I \hspace{-.2mm} I}}{\hspace{.2mm}}}{}}
\newcommand{\IVo}{{\mathring{{\bf\rm I\hspace{-.2mm} V}}{\hspace{.2mm}}}{}}
\newcommand{\ba}{\begin{array}}
\newcommand{\ea}{\end{array}}
\newcommand{\beq}{\begin{eqnarray}}
\newcommand{\eeq}{\end{eqnarray}}
\newtheorem{lm}{lemma}
\newtheorem{thee}{theorem}
\newtheorem{proo}{proposition}
\newtheorem{co}{corollary}
\newtheorem{rem}{remark}
\newtheorem{deff}{definition}
\newcommand{\bd}{\begin{deff}}
\newcommand{\ed}{\end{deff}}
\newcommand{\bl}{\begin{lm}}
\newcommand{\el}{\end{lm}}
\newcommand{\bp}{\begin{proo}}
\newcommand{\ep}{\end{proo}}
\newcommand{\bt}{\begin{thee}}
\newcommand{\et}{\end{thee}}
\newcommand{\bc}{\begin{co}}
\newcommand{\ec}{\end{co}}
\newcommand{\brm}{\begin{rem}}
\newcommand{\erm}{\end{rem}}
\def\frak{\mathfrak}
\def\Cal{\mathcal}
\newcommand{\bS}{\mathbb{S}}
\newcommand{\newc}{\newcommand}
\let\ccdot.
\newc{\aR}{\mbox{\boldmath{$ R$}}}
\newc{\aS}{\mbox{\boldmath{$ S$}}}
\newc{\aT}{\mbox{\boldmath{$ T$}}}
\newc{\aW}{\mbox{\boldmath{$ W$}}}
\newc{\aD}{\mbox{\boldmath{$ D$}}\hspace{-.2mm}}
\newc{\aK}{\mbox{\boldmath{$ K$}}}
\newc{\aL}{\mbox{\boldmath{$ L$}}}
\newcommand{\ce}{{\Cal E}}
\newcommand{\ct}{{\Cal T}}
\newcommand{\bT}{{\Bbb T}}
\newcommand{\Rho}{{\it P}}
\let\hash=\sharp  
\let\i=\iota
\newcommand{\nn}[1]{(\ref{#1})}
\newcommand{\bg}{\mbox{\boldmath{$ g$}}}
\newc{\obstrn}[2]{B^{#1}_{#2}}
\newcommand{\rpl}                         
{\mbox{$
\begin{picture}(12.7,8)(-.5,-1)
\put(0,0.2){$+$}
\put(4.2,2.8){\oval(8,8)[r]}
\end{picture}$}}
\newcommand{\lpl}                         
{\mbox{$
\begin{picture}(12.7,8)(-.5,-1)
\put(2,0.2){$+$}
\put(6.2,2.8){\oval(8,8)[l]}
\end{picture}$}}
\newc{\tensor}[1]{#1}
\newc{\Mvariable}[1]{\mbox{#1}}
\newc{\down}[1]{{}_{#1}}
\newc{\up}[1]{{}^{#1}}
\newc{\JulyStrut}{\rule{0mm}{6mm}}
\newc{\midtenPan}{\mbox{\sf S}}
\newc{\midten}{\mbox{\sf T}}
\newc{\midtenEi}{\mbox{\sf U}}
\newc{\ATen}{\mbox{\sf E}}
\newc{\BTen}{\mbox{\sf F}}
\newc{\CTen}{\mbox{\sf G}}
\def\sideremark#1{\ifvmode\leavevmode\fi\vadjust{\vbox to0pt{\vss
 \hbox to 0pt{\hskip\hsize\hskip1em
 \vbox{\hsize2cm\tiny\raggedright\pretolerance10000
  \noindent #1\hfill}\hss}\vbox to8pt{\vfil}\vss}}}
\numberwithin{equation}{section}
\newcommand{\hh}{{\hspace{.3mm}}}
\newcommand{\cc}{\boldsymbol{c}}
\newcommand{\pdot}{{\boldsymbol{\cdot}}}
\newcommand{\bn}{\bar{\nabla}}
\newcommand{\sss}{\scriptscriptstyle}
\renewcommand\geq{\geqslant}
\renewcommand\leq{\leqslant}
\DeclareMathOperator{\EXT}{d}
\newcommand{\ext}{{\EXT\hspace{.01mm}}}
\DeclareMathOperator{\Vol}{Vol}
\newcommand\reallywidehat[1]{%
\savestack{\tmpbox}{\stretchto{%
  \scaleto{%
    \scalerel*[\widthof{\ensuremath{#1}}]{\kern-.6pt\bigwedge\kern-.6pt}%
    {\rule[-\textheight/2]{1ex}{\textheight}}
  }{\textheight}%
}{0.5ex}}%
\stackon[1pt]{#1}{\tmpbox}%
}
\begin{document}

\subjclass[2010]{
53C18, 53A55, 53C21, 58J32.
}

\renewcommand{\today}{}
\title{
{Generalized Willmore Energies, $Q$-curvatures,
Extrinsic Paneitz Operators, and Extrinsic Laplacian Powers
}}
%
%
%

\author{ Samuel Blitz${}^\flat$, A. Rod Gover${}^\sharp$ \&  Andrew Waldron${}^\natural$}

\address{${}^\flat$
  Center for Quantum Mathematics and Physics (QMAP)\\
  Department  of Physics\\ 
  University of California\\
  Davis, CA95616, USA} 
   \email{shblitz@ucdavis.edu}
 
\address{${}^\sharp$
  Department of Mathematics\\
  The University of Auckland\\
  Private Bag 92019\\
  Auckland 1142\\
  New Zealand,  and\\
  Mathematical Sciences Institute, Australian National University, ACT 
  0200, Australia} \email{gover@math.auckland.ac.nz}
  
  \address{${}^{\natural}$
  Center for Quantum Mathematics and Physics (QMAP)\\
  Department of Mathematics\\ 
  University of California\\
  Davis, CA95616, USA} \email{wally@math.ucdavis.edu}

\vspace{10pt}

\renewcommand{\arraystretch}{1}

\begin{abstract}
Over forty years ago, Paneitz, and independently Fradkin and Tseytlin, discovered a fourth-order conformally-invariant  differential operator, intrinsically defined on a conformal manifold, mapping scalars to scalars.
This operator is a special case of the so-termed {\it extrinsic Paneitz operator} defined in the case when the conformal manifold is itself a conformally embedded hypersurface. In particular, this
encodes the obstruction to smoothly solving the
five-dimensional scalar Laplace equation, and suitable higher dimensional analogs, on conformally compact structures with constant scalar curvature.
Moreover, the extrinsic Paneitz operator can act on tensors  
of general type by dint of being defined on tractor bundles.
Motivated by a host of applications, 
we explicitly compute the extrinsic Paneitz operator. We apply this formula to obtain:
an extrinsically-coupled~$Q$-curvature for embedded four-manifolds, the anomaly in renormalized volumes for conformally compact five-manifolds with negative constant scalar curvature, Willmore energies for embedded four-manifolds, the local obstruction to smoothly solving the five-dimensional singular Yamabe problem, and   new extrinsically-coupled  fourth and sixth order operators for embedded surfaces and four-manifolds, respectively.

\vspace{10cm}

\noindent
{\sf \tiny Keywords: 
Conformal geometry, extrinsic conformal geometry and hypersurface embeddings, conformally compact, $Q$-curvature,
singular Yamabe problem,
   renormalized volume and anomaly, 
     Willmore energy}

\end{abstract}


\maketitle

\pagestyle{myheadings} \markboth{Blitz, Gover, \& Waldron}{Extrinsic Paneitz Operators}

\newpage

\tableofcontents

\newcommand{\balpha}{{\bm \alpha}}
\newcommand{\balphas}{{\scalebox{.76}{${\bm \alpha}$}}}
\newcommand{\bnu}{{\bm \nu}}
\newcommand{\blambda}{{\bm \lambda}}
\newcommand{\bnus}{{\scalebox{.76}{${\bm \nu}$}}}
\newcommand{\bnuss}{\hh\hh\!{\scalebox{.56}{${\bm \nu}$}}}

\newcommand{\bmu}{{\bm \mu}}
\newcommand{\bmus}{{\scalebox{.76}{${\bm \mu}$}}}
\newcommand{\bmuss}{\hh\hh\!{\scalebox{.56}{${\bm \mu}$}}}

\newcommand{\btau}{{\bm \tau}}
\newcommand{\btaus}{{\scalebox{.76}{${\bm \tau}$}}}
\newcommand{\btauss}{\hh\hh\!{\scalebox{.56}{${\bm \tau}$}}}

\newcommand{\bsigma}{{\bm \sigma}}
\newcommand{\bsigmas}{{{\scalebox{.8}{${\bm \sigma}$}}}}
\newcommand{\bbeta}{{\bm \beta}}
\newcommand{\bbetas}{{\scalebox{.65}{${\bm \beta}$}}}

\renewcommand{\bS}{{\bm {\mathcal S}}}
\newcommand{\bB}{{\bm {\mathcal B}}}
\renewcommand{\bT}{{\bm {\mathcal T}}}
\newcommand{\bM}{{\bm {\mathcal M}}}

\newcommand{\go}{{\mathring{g}}}
\newcommand{\nuo}{{\mathring{\nu}}}
\newcommand{\alphao}{{\mathring{\alpha}}}

\newcommand{\Ell}{\mathscr{L}}
\newcommand{\density}[1]{[g\, ;\, #1]}

\renewcommand{\Dot}{{\scalebox{2}{$\cdot$}}}

\newcommand{\PanE}{P_{4}^{\sss\Sigma\hookrightarrow M}}
\newcommand\eqSig{ \mathrel{\overset{\makebox[0pt]{\mbox{\normalfont\tiny\sffamily~$\Sigma$}}}{=}} }
\renewcommand\eqSig{\mathrel{\stackrel{\Sigma\hh}{=}} }
\newcommand\eqtau{\mathrel{\overset{\makebox[0pt]{\mbox{\normalfont\tiny\sffamily~$\tau$}}}{=}}}
\newcommand{\hd }{\hat{D}}
\newcommand{\hdb}{\hat{\bar{D}}}
\newcommand{\Two}{{{{\bf\rm I\hspace{-.2mm} I}}{\hspace{.2mm}}}{}}
\newcommand{\TwoN}{{\mathring{{\bf\rm I\hspace{-.2mm} I}}{\hspace{.2mm}}}{}}
\newcommand{\Fn}{\mathring{\mathcal{F}}}
\newcommand{\csdot}{\hspace{-0.75mm} \cdot \hspace{-0.75mm}}
\newcommand{\IdD}{(I \csdot \hd)}
\newcommand{\Kd}{\dot{K}}
\newcommand{\Kdd}{\ddot{K}}
\newcommand{\Kddd}{\dddot{K}}

 \newcommand{\bdot }{\mathop{\lower0.33ex\hbox{\LARGE$\cdot$}}}

\definecolor{ao}{rgb}{0.0,0.0,1.0}
\definecolor{forest}{rgb}{0.0,0.3,0.0}
\definecolor{red}{rgb}{0.8, 0.0, 0.0}


\section{Introduction}


The Paneitz operator is the fourth-order, conformally covariant, squared-Laplacian
operator on functions 
of an $n\geq 3$ dimensional Riemannian manifold,
\begin{equation} \label{P4-intr}
 P_4  :=
 \Delta^2 + \nabla_a \circ (4 P^{ab} - (n-2) J g^{ab}) \circ \nabla_b + \tfrac{n-4}{2} \hh {\mathcal Q}_{n}(g)\, , 
 \end{equation}
where (see Section~\ref{stRm} for standard tensor definitions and conventions)
$${\mathcal Q}_{n}(g) := -\Delta J-2P^2
+  \tfrac{n}{2} J^2  \, .
$$
It was discovered in independent contexts by Paneitz~\cite{Paneitz} as well as Fradkin and Tseytlin~\cite{Fradkin}. Its
perceived importance was elevated significantly by its connection to
the conformal transformation of the local Riemannian 4-manifold invariant known as
(Branson's)~$Q$-curvature $Q_4(g):={\mathcal Q}_{4}(g)$. If~$\hat{g}$ and~$g$ are metrics conformally related by
$\hat{g}=e^{2\omega}g$, for some smooth function~$\omega$, then
\begin{equation}\label{Qt}
e^{4\omega} Q_4(\hat g) = Q_4(g)+P_4\hh \omega  .
\end{equation}
In dimension four,~$P_4\hh\omega$ is a divergence while the factor~$e^{4\omega}$ exactly balances the change in the metric measure arising from the conformal change,  and so~$\int \! Q$ is a global
conformal invariant of closed Riemanian four-manifolds.  This curvature was
introduced in \cite{BO} to help understand  Polyakov-type formul\ae~\cite{Poly1}
for the conformal variant of functional determinants. It and its
higher dimensional analogs were subsequently found to also have 
applications in, for example, geometric scattering~\cite{GZscatt,StChang} and curvature prescription
problems, see~\cite{Origins,WhatQ,ChYa} for reviews and further references.

\smallskip

Perhaps the greatest importance of the Paneitz operator, the
$Q$-curvature, and their higher order analogs is through their links
to (and origins in \cite{BransonSharp,GJMS}) a class of problems that
fall under the umbrella of {\em geometric holography}. For example, for
the~$Q$-curvature on a closed 4-manifold~$(\Sigma,g_\Sigma)$, the integral~$\int_\Sigma
Q$ gives the log coefficient (the so-called anomaly term) in the
volume asymptotics of the associated Poincar\'{e}-Einstein five-manifold
that has~$\Sigma$ as the boundary at infinity~\cite{HS,HS1,GrVol}. On the
other hand the functional gradient of this integral, with respect to
metric variations, is the Bach tensor (which orginated in conformal
gravity~\cite{Bach}). The Bach tensor also arises as the obstruction
to {\em smoothly} solving for a five-dimensional  Poincar\'e-Einstein metric~\cite{FGbook}, given some boundary data~$(\Sigma,g_\Sigma)$. So overall there is a
rather beautiful and powerful picture.

From a geometric holography viewpoint this picture has a natural
and rich extension which  motivates the current
work. 
This begins with dropping the restriction to
Poincar\'{e}-Einstein manifolds. First recall that a  conformal class $\cc$ 
is an equivalence class of metrics where $\hat g \sim g\in \cc$ when $\hat g=\Omega^2 g$ for some $0<\Omega\in C^\infty M$. Then, 
given a general conformal manifold $(M,\cc)$,
with boundary $\Sigma$,
one might hope, in the spirit of the Poincar\'e--Einstein
program just mentioned, to study the conformal geometry of the
boundary {\em including its extrinsic data} $\Sigma\hookrightarrow (M,\cc)$ by finding, from the
conformal class $\cc$ restricted to  the manifold
interior, a canonically determined metric~$g^o$. 
In fact, we are interested in a  version of this set-up for general hypersurfaces embedded in any conformal manifold $(M,\cc)$, where one then studies metrics $g^o$ on $M\backslash \Sigma$ for which $\Sigma$ is a conformal infinity.
Typically, there is no Einstein metric in the conformal
class~$\cc$. 
When~$\cc$ does contain an
Einstein metric with conformal infinity~$\Sigma$, the hypersurface embedding 
 is necessarily totally umbilic~\cite{LeBrun,Goal} and satisfies further
extremely restrictive related higher order conditions~\cite{BGW}. Thus instead it is natural to
seek, from the conformal class on $M\backslash \Sigma$, a metric that has
constant scalar curvature and $\Sigma$ as a conformal infinity.
Since Einstein metrics have constant scalar
curvature by dint of the Bianchi identities, this is strictly a
weakening of the Poincar\'e--Einstein condition. By rough counting this
{\it singular Yamabe problem} also has the right degrees of freedom---one positive function (the rescaling of the metric $g^o$) fixes one
scalar curvature quantity determined by the metric. The key question then is whether such metrics are uniquely determined by the  embedding data~$\Sigma\hookrightarrow (M,\cc)$. A result of Andersson, Chru\'sciel and Friedrich~\cite{ACF} shows that, at least to a certain order,  asymptotics  of~$g^o$ are uniquely and locally determined by the conformal embedding; see Theorem~\ref{Sc1} below.

In~\cite{WillB,Will1} it was demonstrated that the Andersson--Chru\'sciel--Friedrich result could be employed to study conformal hypersurface embeddings. In particular, those studies yielded extrin\-sically-coupled analogs of the Paneitz operator and its higher Laplacian power counterparts which also extend to higher order operators on tensors~\cite{GW,Will2}. This extension was achieved by employing the
natural invariant calculus on conformal manifolds
and hypersurfaces known as
  tractor calculus~\cite{Thomas, BEG}. 
Fundamental objects therein are weight $w$  tensor  tractor bundles~$\ct^\Phi M[w]$.
Tractor bundles are explained in  Section~\ref{Bground}.
For any vector bundle ${\mathcal V}M$ over $M$,  we denote the product ${\mathcal V}M\otimes\ce M[w]$ by 
${\mathcal V}M[w]$, where $\ce M[w]$ is  the bundle of weight $w$ conformal densities. A section of $\ce M[w]$ is an   
equivalence class~$[g;\varphi]$ where
$(\hat g,\hat \varphi) \sim (g,\varphi)\in f$ when $\hat g=\Omega^2 g$ and $\hat \varphi=\Omega^w \varphi$ for some $0<\Omega\in C^\infty M$;
again see Section~\ref{Bground}.

Our central Theorem~\ref{Big_Formaggio} gives an explicit formula for the extrinsically-coupled Paneitz operator~$\PanE$ introduced in~\cite{GW,Will1,Will2} (see Section~\ref{prufs}) acting on general tractors:
\begin{equation}\label{magna_carta}
\PanE
:\Gamma\Big(\ct^\Phi M\Big[\tfrac{5-d}2\Big]\Big|_\Sigma\Big)\longrightarrow\Gamma\Big(
\ct^\Phi M\Big[\tfrac{-3-d}2\Big]\Big|_\Sigma\Big)\, .
\end{equation}
This is a fourth order (Laplacian-squared type) operator defined along $\Sigma$, and uniquely determined by the conformal embedding $\Sigma\hookrightarrow (M,\cc)$.
Importantly our formula is strongly invariant, in the sense that the above bundles can be tensored with any vector bundle over $M$ equipped with a connection. Twisting the tractor connection appearing in $\PanE$ with that connection
still yields a conformally invariant operator. The same statement does not apply, for example, to the original Paneitz operator of Equation~\nn{P4-intr}---indeed  this could not be the case without 
violating known non-existence results  for conformally invariant cubic Laplacian powers~\cite{GraNon} discussed in Section~\ref{doIcogito}.
Partly because of its strong invariance, our central  result for the extrinsically-coupled Paneitz operator  underpins a slew of applications, which we now describe.

\smallskip
Specializing to scalars, the extrinsic Paneitz operator can be expressed as a product of four degenerate Laplace-type operators: If $\psi$ is any smooth extension of $\overline \psi\in C^\infty \Sigma$, denoting by~$\Delta^o$ the Laplacian of the metric $g^o=s^{-2}g$
for $s\in C^\infty M$ any solution to conditions {\it (i-iii)} of Theorem~\ref{Sc1}, then  the scalar quantity 
$$
\scalebox{.86}{$
s^{-\frac{3+d}2}
\Big(\Delta^o+\dfrac14
(d-3)(d+1)\Big)
\circ
\Big(\Delta^o+\dfrac14
(d-1)^2\Big)
\circ
\Big(\Delta^o+\dfrac14
(d+1)(d-3)\Big)
\circ
\Big(\Delta^o+\dfrac14
(d+3)(d-5)\Big)\circ \!s^{\frac{d-5}2}\psi\, ,
$}
$$
defined along $M\backslash \Sigma$, continues smoothly to $\Sigma$.
Its extension to $\Sigma$ is a fourth order differential operator that acts
tangentially to the boundary.  The above is a rewriting of the fourth
power of the Laplace--Robin operator; see Remark~\ref{PIB}. The above property
of the Laplace--Robin operator and its {\it r\^ole} played in general
conformal boundary problems are explained in~\cite{GW}.  Indeed its
restriction to $\Sigma$ is independent of the choice of
extension the~$\psi$ of $\bar \psi$ and thus defines $ \PanE \big(\overline\psi \hh\big)$.  Moreover it
only depends on the conformal embedding data and enjoys the conformal
covariance 
$$
\bar\Omega^{\frac{d+3}{2}} \Big(
P_4^{\Sigma \hookrightarrow M}(\Omega^2 g) \Big)\big(\bar\Omega^{\frac{5-d}2}\overline \psi\hh\big)=\Big(P_4^{\Sigma \hookrightarrow M}(g)\Big) \big(\overline\psi\hh\big)\, ,
$$
where $\bar\Omega=\Omega|_\Sigma$.

Another interesting application of the extrinsically coupled Paneitz operator, and its generalization  given in~\cite[Theorem 4.1]{GW}, to {\it conformally compact structures}
$(M,s,g^o)$ (where~$s^2 g^o$ is a Riemannian metric on $M$ and $s$
 obeys Condition~(\ref{cond1}) of Theorem~\ref{Sc1}) is to asymptotic solutions of the five-dimensional Laplace equation
 \begin{equation}\label{siren}
 \Delta^o f = 0 \, .
 \end{equation}
 Here the function $f\in  C^\infty M$ has prescribed Dirichlet data $f|_{\partial M}=:\bar f$. When $g^o$ has constant negative scalar curvature, the formal solution to this problem beyond order $s^3$ in $f$ is obstructed
by the extrinsic Paneitz operator acting on $\bar f$, or more generally its conformally compact analog given in~\cite{GW}. In fact this picture generalizes to Laplace-type equations for  higher tensors and tractors in general dimensions.

\smallskip

Our first appplication of 
Theorem~\ref{Big_Formaggio} expresses the extrinsic Paneitz operator acting on scalars as the sum of the intrinsic Paneitz operator of Equation~\nn{P4-intr} and hypersurface operators:


\begin{corollary}
\label{forIamamerecorollary} 
Let~$d\geq 5$. Acting on conformal densities of weight $\frac{5-d}{2}$ along $\Sigma$, 
the extrinsic Paneitz operator is given by
$$
\PanE  \!= \bar{\Delta}^2 +
\bar{\nabla}^a \!\circ \big(4 \bar{P}_{ab} - (d\hh\!-\hh\!3) \bar{J} \bar{g}_{ab} + 8 \IIo_{ab}^2 + \tfrac{d^2 - 4d-1}{2(d-1)(d-2)} K \bar{g}_{ab} + 4(d\!\hh-\hh\!2) \Fo_{ab} \big) \circ \bar\nabla^b 
- \tfrac{5-d}2 \, {\mathcal Q}^{\Sigma \hookrightarrow M}_{d-1}\!(g)\, , 
$$
where the multiplication operator
\begin{align}
\begin{split}
\!\!\!\!\!\!{\mathcal Q}^{\sss\Sigma \hookrightarrow M}_{d-1}&(g) \,:=\:\!
- \bar{\Delta} \bar{J} -2\bar{P}^2
+  \tfrac{d-1}{2} \bar{J}^2\\
& 
+\! \tfrac{2}{d-1} \IIo \csdot (\bar{\Delta} \IIo) 
\\&
+\!\tfrac{1}{d-4}\Big\{\:\:\:
 2(d-2)\bar{\nabla} \csdot \bar{\nabla} \csdot \Fo
 +\scalebox{.9}{$\tfrac{3d^2 - 9d + 4}{2(d-1)(d-2)}$} \hh\bar{\Delta} K 
+ 4\bn^a (\IIo_a \csdot \bn \csdot \IIo)\\&
\phantom{oppera}
-\!2(d-2)\IIo \csdot C_{\hat n}^\top 
- \tfrac{6(d-2)}{d-1} \IIo^{ab} \bn^c W_{cab\hat{n}}^\top \\[1mm]
&
\phantom{oppera}
- \!2(d-2)(d-5) \Fo \csdot \bar{P} 
- 4(d-6) \IIo \csdot \bar{P} \csdot \IIo 
- \tfrac{d^3 + 2d^2 - 27d+44}{2(d-1)(d-2)} \bar{J} K  \\[1mm]
&
\phantom{oppera}
+ \!2(d-2)(d-3) H \IIo \csdot \Fo
- 2(d-2) H \IIo^3 \\
&
\phantom{oppera}
+\! \tfrac{2(d+2)}{d-1} \IIo^{ad}\IIo^{bc}  \bar{W}_{abcd} 
\!+\! 2(d\!-\!2)\big((d\!-\!2)\Fo^2\!+\!
\IIo \csdot \Fo \csdot \IIo\big)
\!+\! \scalebox{.9}{$\tfrac{17d^3 -86d^2 + 133d - 52}{8(d-1)(d-2)^2 }$} K^2 \Big\}
 \, .
\end{split} \label{ext-Q}
\end{align}
\end{corollary}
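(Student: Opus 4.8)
The plan is to derive the corollary as a specialization of the master formula of Theorem~\ref{Big_Formaggio} to the trivial tractor type---that is, to conformal densities of weight $\tfrac{5-d}{2}$ along $\Sigma$---and then to reorganize the output into the displayed ``Paneitz normal form''. This is legitimate because, as recorded just above the corollary, on scalars $\PanE$ is the fourth power of the Laplace--Robin operator, and that operator acts on a tractor manufactured from a density by a canonical differential splitting operator. Hence the scalar operator is obtained from the tractor operator of Theorem~\ref{Big_Formaggio} by pre-composing with this splitting $\ce M[\tfrac{5-d}{2}]\to\ct^\Phi M[\,\cdot\,]$ (taken with $\Phi$ trivial) and post-composing with the bottom-slot projection; this step is purely formal once the tractor formula is in hand. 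The factorization of the scalar operator into the four shifted Laplacians $\Delta^o+\tfrac14(\cdots)$ recorded above gives a second, independent route to, and a cross-check on, the scalar case, at the cost of also needing the asymptotics of the singular Yamabe defining function of Theorem~\ref{Sc1} to the relevant order.

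The substantive work is the substitution itself. Every tractor quantity appearing in the Theorem~\ref{Big_Formaggio} formula must be rewritten in terms of the intrinsic Levi-Civita data of $(\Sigma,g_\Sigma)$: the tractor-$D$ operator and the tractor connection acting on densities collapse---via the standard prolongation identities---into combinations of $\bar\Delta$, $\bar\nabla$, $\bar P$ and $\bar J$; the tractor curvature and its divergences become the intrinsic Weyl $\bar W$ and Cotton tensors together with normal and tangential components of the ambient Weyl $W$ and Cotton $C$ restricted to $\Sigma$; and the normal tractor with its iterated tractor derivatives unpack into the unit conormal $\hat n$, the mean curvature $H$, the trace-free second fundamental form $\IIo$, the Fialkow tensor $\Fo$ and the hypersurface invariant $K$. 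The ambient curvature components restricted to $\Sigma$ are then reduced using the Gauss and Codazzi--Mainardi equations, so that only intrinsic data, $\IIo$, $\Fo$, $K$, and the irreducible purely-extrinsic ambient pieces $W^\top_{cab\hat n}$, $C^\top_{\hat n}$ and $\bar W_{abcd}$ survive.

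The final step is the reorganization. One first isolates the genuinely fourth- and second-order parts; using commutator identities for $\bar\nabla$ to move undifferentiated curvature factors past derivatives (which shifts some contributions into lower order), the second-order part is brought to the form $\bar\nabla^a\circ(\cdots)_{ab}\circ\bar\nabla^b$, and the endomorphism then works out to be $4\bar P_{ab}-(d-3)\bar J\bar g_{ab}+8\IIo_{ab}^2+\tfrac{d^2-4d-1}{2(d-1)(d-2)}K\bar g_{ab}+4(d-2)\Fo_{ab}$, whose first two terms, together with $\bar\Delta^2$, reproduce the intrinsic Paneitz operator at weight $\tfrac{5-d}{2}$ on the $(d-1)$-manifold $\Sigma$. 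Subtracting the intrinsic ${\mathcal Q}_{d-1}(g_\Sigma)$ from the remaining zeroth-order multiplication operator leaves the extrinsic coupling, which is massaged into~\nn{ext-Q} using the contracted Codazzi and second Bianchi identities, the Cotton identity, the defining relations for $\Fo$ and $K$, and the conformal decompositions of $\bar W$.

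The main obstacle is precisely this last, zeroth-order, accounting: it is where the greatest number of curvature identities interact, where all the $d$-dependent rational coefficients---including the $\tfrac{1}{d-4}$ factor, which forces the hypothesis $d\geq5$---must come out exactly, and where there is genuine freedom in how the answer is written, so the conventions for $\IIo$, $\Fo$, $K$, the sign of $\bar\Delta$, and the normalizations of $W$ and $C$ must be tracked with care. We would guard against error with several independent checks: conformal covariance of the assembled operator (the density weights must balance as in~\nn{magna_carta}); reduction to the intrinsic Paneitz operator and Branson's $Q_{d-1}$ when $\IIo=0$ and the ambient is conformally flat, so that $\Fo$, $K$ and the extrinsic Weyl and Cotton components all vanish; agreement with the scalar results of~\cite{GW,Will1} in the boundary case $d=5$; and, if needed, a symbolic computer-algebra verification of the full identity.
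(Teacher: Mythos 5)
Your plan matches the paper's proof in both outline and substance: specialize Theorem~\ref{Big_Formaggio} to the trivial tractor type, convert the tangential Thomas-$D$ operators $\hd^T$ to the hypersurface operator $\hdb$ via the Gau\ss--Thomas formula of Theorem~\ref{thesecondhorcrux}, unpack the remaining tractor quantities into Riemannian hypersurface data using the Gau\ss, Codazzi and Fialkow identities, and reorganize into Paneitz normal form. The paper carries out exactly this substitution, with the two heavy computations (first $\hdb^A\circ P_2^{\sss\Sigma\hookrightarrow M}\circ\hdb_A$, then the subleading terms) handled by symbolic computer algebra in FORM; your ``if needed, a symbolic computer-algebra verification'' is in fact the primary engine of the argument.

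Two small points worth flagging. First, the opening paragraph's appeal to a ``canonical differential splitting operator'' with a ``bottom-slot projection'' is a distraction: for trivial $\Phi$ the bundle $\ct^\Phi M[\tfrac{5-d}{2}]$ \emph{is} $\ce M[\tfrac{5-d}{2}]$, so the operator of Theorem~\ref{Big_Formaggio} already acts directly on conformal densities---no injection into or projection out of a larger tractor bundle is involved. Second, you attribute the hypothesis $d\geq 5$ to the $\tfrac{1}{d-4}$ factor, but that is incidental; the substantive issue is that Theorem~\ref{Big_Formaggio} is stated only for $d\geq 8$, and getting down to $d=5,6,7$ requires a dimensional-continuation argument (the paper discusses the poles at $d=6,7$ explicitly after the theorem). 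For the scalar corollary specifically, the paper's proof notes that one must first rewrite $\IVo_{ab}$ via Equation~\nn{fourth} in terms of $C^\top_{\hat n(ab)}$ before the resulting expression is manifestly regular at $d=5$; this step, needed to make the continuation and the subsequent specialization in Theorem~\ref{BQC} valid, is missing from your account.
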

\noindent
The above result has been verified using an independent method in~\cite[Theorem 1]{Jagain}.
  The extrinsic curvatures appearing in the above corollary are defined in Section~\ref{conbed}. For now note that $\IIo$
   is the trace-free second fundamental form  while $\mathring F$
    is the trace-free part of the so-called {\it Fialkow tensor}. The latter is an example of a third fundamental form, in the sense defined in~\cite{BGW}; again see Section~\ref{conbed}. The weight $-2$ conformal density $K$ is the square of the trace-free second fundamental form and is termed the {\it rigidity density} because it was employed in~\cite{Polyakov} to describe rigid fundamental strings. The mean curvature of $\Sigma\hookrightarrow (M,g)$ is denoted by~$H$.
Also~$\bar \nabla$ is the Levi-Civita connection of the metric $\bar g$ induced along $\Sigma$ by $g\in \cc$ and $\bar \Delta$ the corresponding  Laplacian. The Schouten tensor of this metric is denoted  $\bar \Rho$ while $\bar J$ is its trace. Also,
a relatively simple computation shows that residue of the $d=4$ pole in Equation~\nn{ext-Q} reproduces the functional gradient 
of the integrated three-manifold extrinsic  
$Q$-curvature computed in~\cite{hypersurface_old}.
This is an odd dimensional boundary  example of a conjecture made in~\cite[Remark 5.5]{Will2} and verified for two dimensional boundaries.
A proof for all even boundary dimensions may be found in~\cite{Jodd}.

Just as the intrinsic Paneitz operator is linked to a $Q$-curvature,  the same holds for the extrinsic Paneitz operator.
The four-manifold {\it extrinsic $Q$-curvature} is defined by extending the extrinsic Paneitz operator to act on log densities~\cite{GW,BGW}  (see also~\cite{FEFF-HIR} for an ``ambient metric'' approach to the intrinsic case), 
\begin{equation}\label{creatine}
Q_4^{\Sigma \hookrightarrow M}(g_\tau):=\PanE\log \tfrac1\tau\big|_{\Sigma,g_\tau}\, .
\end{equation} 
Note that the choice of a positive conformal density $\tau=[g;t]=[g_\tau;1]$ is completely equivalent to that of a Riemannian metric $g_\tau\in \cc$.
The right hand side of the above display is a weight $-4$ density evaluated along the boundary in the scale $g_\tau$.
Also,  log densities are explained in Section~\ref{Bground}.
By construction,
the extrinsic $Q$-curvature obeys the linear shift property
\begin{equation}\label{linearshift}
Q_4^{\Sigma\hookrightarrow M}(e^{2\omega} g)=e^{-4\omega} \big[
Q_4^{\Sigma\hookrightarrow M}(g)
+P^{\Sigma\hookrightarrow M} \bar\omega\big]\, ,
\end{equation}
where $\omega \in C^\infty M$ and $\bar \omega=\omega|_\Sigma$.
In the above we used that the extrinsic Paneitz operator for embedded four-manifolds acts on weight zero densities, and that these are equivalent to smooth functions.
In particular, for $\bar f\in C^\infty \Sigma$, we have
\begin{align}\label{Pfab}
\PanE \bar f = \bar{\Delta}^2\bar  f +
\bar{\nabla}^a \circ \big(4 \bar{P}_{ab} - 2 \bar{J}\hh  \bar{g}_{ab} + 8 \IIo_{ab}^2  + 12 \Fo_{ab} + \tfrac{1}{6} K \bar{g}_{ab}\big) \circ \bar\nabla^b \bar f \, .
\end{align}
The first three terms displayed on the right hand side are precisely the Paneitz operator intrinsic to the conformal 4-manifold~$(\Sigma,\cc_\Sigma)$. 
Just as for its intrinsic counterpart, the extrinsic~$Q$-curvature  can also be determined from ${\mathcal Q}_4^{\Sigma \hookrightarrow M}$ in Corollary~\ref{forIamamerecorollary} (see Lemma~\ref{fiatlux}); this is recorded in the following theorem. 
\begin{theorem}\label{BQC}
Let~$\Sigma\hookrightarrow (M,\cc_\Sigma)$ be a conformally embedded hypersurface.
Then, given~$g\in \cc$ the extrinsically-coupled~$Q$-curvature is 
\begin{align*}
Q_4^{\Sigma\hookrightarrow M}( g) = Q^\Sigma_4 + W\! m + U+Qe\, ,
\end{align*}
where
\begin{align*}
Q^\Sigma_4 \hh &\, =
-\bar{\Delta} \bar{J}
- 2 \bar{P}^2
+2\bar{J}^2   \, ,\\
 W\! m  &:=\:
\phantom{+}\tfrac12 \IIo \csdot \bar{\Delta} \IIo 
+\tfrac43\bar{\nabla}^a \big(\IIo_{a} \csdot \bar{\nabla} \csdot \IIo \big)
+\tfrac32 \bar \Delta K\\  
&\phantom{=}\:\:\: 
  -6 \IIo \csdot C_n^{\top}
  + 4 \IIo \csdot \bar{P} \csdot \IIo
  - \tfrac72 \bar{J} K 
 -6 H \IIo^3
  + 12 H \IIo \csdot \Fo  \, ,
  \\[1mm] 
U\:\: &:=
 \phantom{+}18 \Fo \csdot \Fo
+6 \IIo \csdot \Fo \csdot \IIo
+ \tfrac{49}{24} K^2  
 -\tfrac92 \IIo^{ab} \bar{\nabla}^c W_{cab \hat{n}}^\top 
+ \tfrac72 \IIo^{ad} \IIo^{bc} \bar{W}_{abcd} \color{black}\, ,\\[1mm] 
 Qe\:\hh
 &:=\:
 \phantom{+}\tfrac83\bar{\nabla}^a \big(\IIo_{a} \csdot \bar{\nabla} \csdot \IIo \big)
 +6 \bar{\nabla} \csdot \bar{\nabla} \csdot \Fo 
 - \tfrac{1}{12} \bar{\Delta} K  \, ,
\end{align*}
and~$W\! m$ and~$U$ 
are conformally invariant   weight~$-4$ conformal densities. 
\end{theorem}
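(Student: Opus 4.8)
The plan is to read the formula off Corollary~\ref{forIamamerecorollary} and then, separately, establish the two claimed conformal invariances.

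\emph{Reduction to Corollary~\ref{forIamamerecorollary}.} Applying the operator of Corollary~\ref{forIamamerecorollary} to the constant density $1$ annihilates every derivative term and leaves $\PanE\,1 = -\tfrac{5-d}{2}\,{\mathcal Q}^{\Sigma\hookrightarrow M}_{d-1}(g)$. Regarding $\PanE$ as a family in $d$ acting on densities of weight $\tfrac{5-d}{2}$, and setting that weight to $-\epsilon$ (so $d = 5 + 2\epsilon$), the log--density definition \eqref{creatine}, evaluated in the scale $g_\tau$, unpacks --- this being precisely the dimensional--continuation statement of Lemma~\ref{fiatlux} --- as
$$
Q_4^{\Sigma\hookrightarrow M}(g) \;=\; \partial_\epsilon\bigl(\epsilon\,{\mathcal Q}^{\Sigma\hookrightarrow M}_{d-1}(g)\big|_{d=5+2\epsilon}\bigr)\big|_{\epsilon=0} \;=\; {\mathcal Q}^{\Sigma\hookrightarrow M}_{d-1}(g)\big|_{d=5}\,.
$$
Every rational coefficient in \eqref{ext-Q} is regular at $d=5$, so I would substitute $d=5$, simplify, and collect terms; sorting the result into the intrinsic piece $Q^\Sigma_4$, the divergences $Qe$, and the two remaining combinations $W\!m$ and $U$ is then a finite algebraic check.

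\emph{Conformal invariance of $W\!m$ and $U$.} Here I would first cut the work down using the shift property \eqref{linearshift} together with its intrinsic analogue $Q^\Sigma_4(e^{2\bar\omega}\bar g) = e^{-4\bar\omega}\bigl(Q^\Sigma_4(\bar g) + P_4^\Sigma\bar\omega\bigr)$, where $P_4^\Sigma = \bar\Delta^2 + \bar\nabla^a\circ(4\bar P_{ab} - 2\bar J\bar g_{ab})\circ\bar\nabla^b$ is the intrinsic four--manifold Paneitz operator. By \eqref{Pfab} one has $P^{\Sigma\hookrightarrow M} - P_4^\Sigma = \bar\nabla^a\circ\bigl(8\IIo_{ab}^2 + 12\Fo_{ab} + \tfrac16 K\bar g_{ab}\bigr)\circ\bar\nabla^b$, so subtracting the two shift relations shows that $W\!m + U + Qe$ acquires under $\bar g\mapsto e^{2\bar\omega}\bar g$ only the anomaly $\bar\nabla^a\bigl(8\IIo_{ab}^2 + 12\Fo_{ab} + \tfrac16 K\bar g_{ab}\bigr)\bar\nabla^b\bar\omega$ (modulo the tautological $e^{-4\bar\omega}$). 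Since $\IIo$ and $\Fo$ are conformally invariant and $K$ has conformal weight $-2$ (Section~\ref{conbed}), the conformal transformation of $Qe = \tfrac83\bar\nabla^a(\IIo_a\csdot\bar\nabla\csdot\IIo) + 6\,\bar\nabla\csdot\bar\nabla\csdot\Fo - \tfrac1{12}\bar\Delta K$ is governed entirely by the change of the Levi--Civita connection $\bar\nabla$ on weighted tensors; I would compute it and check that it reproduces exactly the anomaly above. That establishes invariance of $W\!m + U$, and a single further direct computation --- transforming $U$, say, with the conformal laws for $H$, $C^\top_{\hat n}$, $\bar\nabla^c W^\top_{cab\hat n}$ and $\bar W_{abcd}$ from Section~\ref{conbed} --- then isolates the invariance of $U$, hence also of $W\!m$. (One may instead transform $W\!m$, $U$ and $Qe$ all directly; this needs the same input but does not invoke \eqref{linearshift}.)

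\emph{Expected main obstacle.} The crux is this last point: $W\!m$ and $U$ are each assembled from objects --- the mean curvature $H$, the connection $\bar\nabla$, the Schouten data $\bar P, \bar J$, and the scale--dependent restrictions $C^\top_{\hat n}$, $W^\top_{cab\hat n}$ of the tractor curvature --- none of which is conformally invariant, so their invariance rests on a cancellation that must be tracked to second order in $\bar\omega$, with the Codazzi and Fialkow(-type) identities entering to handle the $\IIo\csdot C^\top_{\hat n}$ and $\IIo^{ab}\bar\nabla^c W^\top_{cab\hat n}$ terms. A comparatively minor point to be careful about is the dimensional continuation used above: one should confirm that $\PanE$ really extends to a $d$--family that is pole--free near $d=5$ and that differentiating in the weight parameter never touches the Laplacian--squared and divergence terms --- which holds because those terms annihilate constants.
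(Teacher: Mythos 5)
Your first step coincides with the paper's: invoke Lemma~\ref{fiatlux} and set $d=5$ in Equation~\eqref{ext-Q}, so the identification of $Q_4^{\Sigma\hookrightarrow M}$ with $Q^\Sigma_4+W\!m+U+Qe$ is the same finite algebraic check in both arguments. Where you genuinely diverge is in proving the invariance of $W\!m$ and $U$. The paper never computes a conformal variation: it observes that the one non-manifestly-invariant term in $U$, namely $\IIo^{ab}\bar\nabla^cW^\top_{cab\hat n}$, is invariant because $L_{AB}\bar\square_Y L^{AB}=\IIo^{ab}\bar\nabla^cW^\top_{cab\hat n}+\IIo^{ad}\IIo^{bc}\bar W_{abcd}$ (a consequence of Codazzi--Mainardi), and it handles $W\!m$ by exhibiting $W\!m$ plus an explicit linear combination of manifestly invariant scalars as $\tfrac32\ddot K=\tfrac32\,I\csdot\hd^2(\hd I)^2$, which is invariant by construction. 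Your route---transferring the anomaly of $W\!m+U+Qe$, known from \eqref{linearshift}, \eqref{Qt} and \eqref{Pfab}, onto $Qe$, and then performing one direct variation to split $W\!m$ from $U$---is logically sound and more elementary; the $Qe$ check is moreover a useful consistency test of Corollary~\ref{forIamamerecorollary}. What it costs you is the full $\Upsilon$-bookkeeping for $H$, $C^\top_{\hat n}$, $\bar\Delta$ and $\bar\nabla$ on weighted tensors, including the cancellation of the normal-derivative ($\Upsilon_{\hat n}$) contributions from the $H\IIo^3$ and $H\IIo\csdot\Fo$ terms, all of which the tractor identities sidestep; it also does not produce Equation~\eqref{LboxL}, which the paper reuses later (e.g.\ in Theorem~\ref{P4N}). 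One small correction: $H$ and $C^\top_{\hat n}$ occur in $W\!m$, not in $U$; the only term of $U$ whose invariance is not immediate is $\IIo^{ab}\bar\nabla^cW^\top_{cab\hat n}$, which---as the paper's remark after the proof notes---is also the pairing of $\IIo$ with the image of the invariant density $W^\top_{c(ab)\hat n}$ under a standard conformally invariant divergence on four-manifolds; that is probably the cleanest way to carry out your ``direct computation'' for $U$.
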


This theorem leads to a formula for the anomaly in the renormalized volume of the manifold~$(M\backslash\Sigma,s^{-2}g)$  as described in Section~\ref{Suitable}. A second application is to higher Willmore energies for conformally embedded hypersurfaces. 
These applications both rely on the fact that the linear shift Property~\nn{linearshift} 
together with Equation~\nn{Pfab}
imply that the integral 
\begin{equation}\label{WE1}
\int_\Sigma \ext \! \Vol(\bar g)
\hh Q^{\Sigma\hookrightarrow M}(g)
\end{equation}
is independent of the choice of $g\in \cc$. Note that here
$\bar g$ is the metric induced along $\Sigma$ by $g$ and  $\ext \! \Vol(\bar g)$ is the corresponding volume element.
Indeed, we shall  define higher Willmore energies  as follows (see~\cite{Will1}):
\begin{definition}\label{WillDef} Let the  dimension~$d$ of $M$ be odd. Then any functional of conformal embeddings given by
$$
E\big[\Sigma\hookrightarrow (M,\cc)\big]
=\int_\Sigma 
\ext \! \Vol(\bar g)\,
{\mathcal E}(g,\Sigma)\,   ,
$$
where~$g\in \cc$ and~$\bar g\in \cc_\Sigma$ is the corresponding induced metric,
is called a {\it higher Willmore energy} if
\begin{enumerate}[(i)]
\item the energy functional~${ E}$ only depends on the conformal embedding~$\Sigma\hookrightarrow (M,\cc)$, and 
\smallskip
\item \label{property} the functional gradient
of~$E$  with respect to variations of the embedding~$\Sigma\hookrightarrow M$
has leading linear term (in  any scale with non-vanishing mean curvature)
 proportional to~$\bar\Delta^{\frac{d-1}2} H^g$.
\end{enumerate}
\end{definition}

\noindent 
Note that if Condition~(\ref{property})
is suitably relaxed, it is possible to consider also generalizations of Willmore energies to even dimensional host spaces~$M$, see~\cite{Will1,hypersurface_old,JO}.

\smallskip

There are various 
 constructions  of higher Willmore energies in the literature. Indeed   Graham and Witten~\cite{GrWi} recovered the original Willmore energy in a study of submanifold observables in the AdS/CFT correspondence. 
 Higher Willmore equations were proposed in~\cite{WillB,Will1} and have subsequently been shown to arise as the Euler--Lagrange equation of higher Willmore energies~\cite{GrSing,RenVol}.
 We relate and discuss all of these energies  in Section~\ref{WE}. A key result, which may be established as a corollary of Theorem~\ref{Big_Formaggio} (a direct proof is given at the end of Section~\ref{Suitable}), is the following higher Willmore:

\begin{theorem}\label{AWm}
Let~$\Sigma^4\hookrightarrow (M^5,\cc_\Sigma)$ be a conformally embedded hypersurface.
Given~$g\in \cc$,  let 
${W\!\hh m}(g,\Sigma)$ be as given in Theorem~\ref{BQC}. Then 
 the functional  
$$
\int_\Sigma 
\ext \! \Vol(\bar g)\ {W\!\hh m}(g,\Sigma)\, ,
$$
is a higher Willmore energy.

\end{theorem}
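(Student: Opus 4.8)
The plan is to verify, for $d=5$, the two defining properties of a higher Willmore energy in Definition~\ref{WillDef} for $E\big[\Sigma\hookrightarrow(M,\cc)\big]:=\int_\Sigma\ext\!\Vol(\bar g)\,{W\!\hh m}(g,\Sigma)$, using the explicit formula for ${W\!\hh m}$ recorded in Theorem~\ref{BQC}.

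Property (i) is essentially immediate. By Theorem~\ref{BQC}, ${W\!\hh m}$ is a conformally invariant weight $-4$ conformal density, while $\ext\!\Vol(\bar g)$ is the weight $+4$ volume density of the induced metric; hence, writing $\bar\omega:=\omega|_\Sigma$, under $g\mapsto e^{2\omega}g$ the factor $e^{4\bar\omega}$ produced by $\ext\!\Vol(\bar g)$ cancels the factor $e^{-4\bar\omega}$ produced by ${W\!\hh m}$, so $E$ is independent of the choice of $g\in\cc$; since ${W\!\hh m}$ is built solely from conformal embedding data, $E$ depends only on $\Sigma\hookrightarrow(M,\cc)$. Alternatively, one may write $\int_\Sigma\ext\!\Vol(\bar g)\,{W\!\hh m}=\int_\Sigma\ext\!\Vol(\bar g)\,Q_4^{\Sigma\hookrightarrow M}(g)-\int_\Sigma\ext\!\Vol(\bar g)\,Q_4^\Sigma-\int_\Sigma\ext\!\Vol(\bar g)\,U$, since the density $Qe$ of Theorem~\ref{BQC} is a sum of total divergences and so integrates to zero over the closed manifold $\Sigma$, and then invoke Equation~\nn{WE1}, conformal invariance of the intrinsic total $Q$-curvature, and Theorem~\ref{BQC}, respectively, for the three surviving integrals.

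For Property (ii), I would compute the first variation of $E$ under a normal deformation of the embedding generated by $w\,\hat n$, and extract, in the resulting Euler--Lagrange density, the term of highest differential order linear in the extrinsic curvature. Since the functional gradient of a local functional is a natural local density, this leading term is a universal expression and may be computed in the simplest model: take $M$ flat and $\Sigma$ a hyperplane, deformed by a compactly supported normal graph $w\,\hat n$. Then $\bar g$ is flat to order $w^2$, all host curvatures (hence $C_n^\top$, $W^\top$, $\bar W$) and $\bar P,\bar J$ are $O(w^2)$, and $\IIo_{ab}=\bar\nabla_a\bar\nabla_b w-\tfrac14\bar g_{ab}\bar\Delta w+O(w^3)$, $H=\tfrac14\bar\Delta w+O(w^3)$. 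Of the eight terms of ${W\!\hh m}$ in Theorem~\ref{BQC}, the two divergence terms $\tfrac43\bar\nabla^a(\IIo_a\csdot\bar\nabla\csdot\IIo)$ and $\tfrac32\bar\Delta K$ integrate to zero over closed $\Sigma$; the terms $-6\IIo\csdot C_n^\top$, $4\IIo\csdot\bar P\csdot\IIo$, $-\tfrac72\bar J K$, $-6H\IIo^3$, $12H\IIo\csdot\Fo$ are $O(w^4)$ or vanish identically in this model; and only $\tfrac12\IIo\csdot\bar\Delta\IIo$ survives at order $w^2$, giving, after two integrations by parts, $E=\tfrac38\int_\Sigma w\,\bar\Delta^3 w+O(w^4)$, a non-degenerate quadratic form in $w$. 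Its functional gradient is therefore $\tfrac34\bar\Delta^3 w+\cdots=3\,\bar\Delta^2\big(\tfrac14\bar\Delta w\big)+\cdots$, whose leading linear term is $3\,\bar\Delta^2 H$, a nonzero multiple of $\bar\Delta^{\frac{d-1}{2}}H^g$, as required. The same result follows by running the variation in a general scale: the only top-order, linear-in-extrinsic-curvature contribution to the gradient comes from $\tfrac12\IIo\csdot\bar\Delta\IIo$ and, after integration by parts together with the contracted Codazzi identity $\bar\nabla^a\IIo_{ab}=(d-2)\bar\nabla_b H+(\text{host curvature})$, equals $\pm(d-2)\,\bar\Delta^2 H+(\text{l.o.t.})$.

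The step I expect to be the main obstacle is precisely this bookkeeping: one must check that no cancellation occurs, and that no competing top-order, linear-in-extrinsic-curvature contribution to the gradient is produced either by the remaining terms of ${W\!\hh m}$ or by the variations of the volume element, the induced metric, or the Levi-Civita connection and Laplacian implicit inside the contractions. This is organized by a derivative/weight count: the leading piece of the gradient must be a natural conformal density, linear in the extrinsic curvature jet and of fourth differential order, and --- modulo the Codazzi identity, which exchanges $\bar\nabla\bar\nabla\bar\Delta\IIo$ for $\bar\Delta^2 H$ up to host-curvature and lower-order terms --- the only such invariant is a multiple of $\bar\Delta^2 H$; the flat linearization then certifies that this multiple, $\pm(d-2)=\pm3$, is nonzero. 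The signs in the first variation $\delta\II_{ab}=\pm\bar\nabla_a\bar\nabla_b w+\cdots$ and in Codazzi depend on the orientation and curvature conventions of Section~\ref{conbed}, but are irrelevant to the statement, which asks only for proportionality.
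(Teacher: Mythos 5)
Your proposal is correct and follows essentially the same route as the paper: conformal invariance of $\int_\Sigma \ext\!\Vol(\bar g)\,W\!\hh m$ comes directly from the invariance of $W\!\hh m$ established in Theorem~\ref{BQC}, and the variational property is reduced to the single term $\tfrac12\IIo\csdot\bar\Delta\IIo$, whose gradient has leading part a nonzero multiple of $\bar\Delta^2 H$ while the remaining terms are checked not to compete. Your explicit flat-model linearization (yielding $\tfrac38\int w\,\bar\Delta^3 w$, hence gradient $3\bar\Delta^2H+\cdots$) is consistent with the identity $-\tfrac1{24}\int\IIo\csdot\bar\Delta\IIo=\tfrac12\int|\bar\nabla H|^2+\cdots$ that the paper invokes together with the cited variation of $\int|\bar\nabla\IIo|^2$.
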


\noindent Higher Willmore energies yield   anomalies in renormalized volumes of suitable conformally compact manifolds~\cite{RenVol} and submanifolds of Poincar\'e--Einstein structures~\cite{GrWi}. See Sections~\ref{Suitable} and~\ref{WE}.

 Our next application of Theorem~\ref{Big_Formaggio} is to compute the functional gradient of higher Willmore energies with respect to variations of the embedding $\Sigma\hookrightarrow M$. For that, we use a result of Graham~\cite{GrSing} (see also~\cite{RenVol}) that shows the
 functional gradient of the higher Willmore energy appearing on the right hand side of Equation~\nn{volanom} above is given by the obstruction $B|_\Sigma$ to smoothly solving the singular Yamabe problem given in Condition~(\ref{fruity}) of Theorem~\ref{Sc1}.
 Our result for this obstruction, and hence the variation of the energy~\nn{WE1}, is another consequence of Theorem~\ref{Big_Formaggio}  and is given in Theorem~\ref{Voldemort}.

\medskip

Our final application of Theorem~\ref{Big_Formaggio}
is to sixth order conformally invariant operators.
An old result of Graham is that there is no conformally invariant, sixth order, Laplacian power acting on conformal densities 
for general four dimensional conformal manifolds~\cite{GraNon}. 
 In a similar vein, for  two dimensional conformal structures, there is no conformally invariant fourth order, (Paneitz-type) Laplacian power operator. However there does exist an operator
$$
 \bar{\Delta}^2 +\cdots\, ,
$$
which maps $\Gamma(\ce \Sigma[1])\to \Gamma(\ce \Sigma[-3])$ that is defined for conformally embedded surfaces $\Sigma\hookrightarrow (M^3,\cc)$; see Lemma~\ref{Horcrux}.
This is the first operator in a sequence of extrinsically-coupled Laplacian powers~\cite{Will2} whose existence relies on
conformal embedding data. For the case of conformally embedded four-manifolds, there in fact exists a conformally invariant, sixth order, Laplacian power mapping $\Gamma(\ce \Sigma[1])\to \Gamma(\ce \Sigma[-5])$. This has leading term 
$$
 \bar{\Delta}^3 +\cdots\
\, ,
$$
and is defined along conformally embedded hypersurfaces $\Sigma^4\hookrightarrow (M^5,\cc)$, see
Theorem~\ref{Ronkonkoma}.
Graham's non-existence result is circumvented here because the additional embedding data 
provides the necessary tensors absent in the intrinsic conformal geometry of  $\Sigma^4$.

\medskip
As we post this work, we notice that Juhl has  announced results~\cite{J} for some of the scalar results presented in this manuscript.

\subsection{Riemannian conventions}\label{stRm}
Throughout we take $M$ to be a (for simplicity) oriented $d$-manifold. Unless specifically indicated, we take $d\geq 3$. Also, we will assume that
all structures are smooth. Complicated tensor expressions will often be displayed in an abstract index notation where, for example, $v^a$ and $\omega_a$ respectively denote sections of $TM$ and $T^*M$ (not the components of such in a choice of coordinate system). In particular, the contraction $\omega(v):=v^a \omega_a = v(\omega)$. Given a metric $g$, we use shorthand notations such as $v^2 := g(v,v)=g_{ab} v^a v^b=:v_a v^a$ and $u.v=g(u,v)=u^a g_{ab} v^b=: u_a v^b$, where the metric has been used to raise and lower indices according to the canonical isomorphism it determines between  $TM$ and $T^*M$. 
While we  work exclusively in Riemannian signature, many results carry over to indefinite signature metrics upon making the obvious adjustments.
The Riemannian curvature tensor $R$ for the Levi-Civita connection determined by $g$ is defined by
$$R(u,v)w = [\nabla_u, \nabla_v] w - \nabla_{[u,v]} w\, ,$$
where the bracket notation $[\pdot,\pdot]$ is used both for the commutator of operators and Lie bracket of vector fields.
In the abstract index notation, the vector $R(u,v)w$ is denoted
by $u^a v^b R_{ab}{}^c{}_dw^d$. We will also employ a hybrid notation $R_{uv}{}^c{}_w$ for contractions such as these. The Riemann tensor decomposes into its trace-free and pure trace pieces according to 
$$
R_{abcd}=W_{abcd}+2g_{c[a} P_{b]d}-2g_{d[a} P_{b]c}\, .
$$
In the above, $W_{abcd}$ is the {\it Weyl tensor} and $P_{ab}$ the {\it Schouten tensor}. The latter is related to the {\it Ricci tensor} $Ric_{ab}:=R_{ca}{}^c{}_b$ by
$$
Ric_{ab} = (d-2) P_{ab} + g_{ab}J\, ,\quad J:=P_{ab} g^{ab}\, .
$$
Also note that a bracketed group of indices denotes that they are totally skew so that, for example $u^{[a} v^{b]}:=\frac12( u^a v^b-u^b v^a)$. Similarly, $u^{(a} v^{b)}:=\frac12( u^a v^b+u^b v^a)$ and a $\circ$ will be used to indicate the trace-free part of a tensor, so that $u^{(a}v^{b)\circ}=u^{(a} v^{b)}-\frac 1d g^{ab} g(u,v)$.
The contraction of the metric~$g_{ab}$ and its inverse $g^{bc}$  gives the identity endomorphism of $\Gamma(TM)$ denoted by a Kronecker delta symbol $\delta_a^c=g_{ab} g^{bc}$.
Sometimes, more complicated contractions such as $W_{abcd}P^{bc}$ will be assigned shorthand notations such as $W_{aPd}$ or even $W(\pdot,P,\pdot)$ in situations where no ambiguity can arise.
To handle the endomorphism action of  the Riemann curvature two-form on tensors, the following notation is useful. For any rank two tensor $X_{ab}$ and an arbitrary tensor $Y^{cde\cdots}$, we define
$$
X^\hash Y^{cde\cdots}=
X^c{}_b Y^{bde\cdots}+
X^d{}_b Y^{cbe\cdots}+
X^e{}_b Y^{cdb\cdots}+\cdots\, .
$$
For higher rank tensors $X$ we use bullets to indicate which indices give the endomorphism, so for example we may write $(X_{\cdots a\bullet b\cdots c\bullet d \cdots})^\hash$ for this. 
The covariant curl of the Schouten tensor gives the {\it Cotton tensor}
$$
C_{abc}:=2\nabla_{[a} P_{b]c}\, .
$$
Finally, constants of proportionality never vanish, so the statement ``$A$ is proportional to $B$'' means that $A=kB$ for some constant $k\neq 0$.

\section{Background}\label{Bground}

We  study conformally embedded hypersurfaces~$\Sigma\hookrightarrow (M,\cc)$ where $(M,\cc)$ is an conformal manifold. (For simplicity we assume $M$ oriented.) To that end, the following result, is central:
\begin{theorem}\label{Sc1}
Let $(M,\cc)$ be a conformal $d$-manifold and $\Sigma\hookrightarrow M$ a smoothly embedded hypersurface. Then, given $g\in \cc$ there exists 
a function $s\in C^\infty M$ such that the following conditions hold:
\begin{enumerate}[(i)]
\item \label{cond1}$s|_\Sigma = 0 \neq \ext s\big|_\Sigma$.\\[-1mm]
\item Along $M\backslash \Sigma$, $Sc^{s^{-2} g}
=-d(d-1)\big(1+s^d {\mathcal B})
$,
where ${\mathcal B}\in C^\infty M$.\\[-1mm]
\item The function $s$ is unique modulo the addition of terms $s^{d+1}A$ for any $A\in C^\infty M$.\\[-1mm] 
\item\label{fruity} The function ${\mathcal B}|_\Sigma\in C^\infty \Sigma$ is uniquely determined by the conformal embedding data $\Sigma\hookrightarrow (M,\cc)$. Replacing $g$ by $\hat g=\Omega^2 g$ with $0<\Omega\in C^\infty M$, it obeys
$
{\mathcal B}(\hat g) = 
\Omega^{-d}
{\mathcal B}(g)
$.
\end{enumerate}
\end{theorem}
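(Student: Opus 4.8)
The plan is to solve the singular Yamabe equation $Sc^{s^{-2}g} = -d(d-1)$ order-by-order along $\Sigma$ as a formal power series in a defining function, and then extract the obstruction coefficient. First I would fix $g\in\cc$ and let $x$ be the signed $g$-distance to $\Sigma$ (or any fixed defining function with $x|_\Sigma=0\neq\ext x|_\Sigma$); write the sought function as $s = x\,(1 + x\,u_1 + x^2 u_2 + \cdots)$ with the $u_k\in C^\infty\Sigma$ to be determined, or equivalently work with the conformal factor $e^{2\Upsilon}=s^{-2}g/g$ and expand $\Upsilon$. The standard formula for the scalar curvature under a conformal change, $Sc^{e^{2\Upsilon}g} = e^{-2\Upsilon}\big(Sc^g - 2(d-1)\Delta^g\Upsilon - (d-1)(d-2)|\ext\Upsilon|_g^2\big)$, turns the condition $Sc^{s^{-2}g}=-d(d-1)(1+s^d\mathcal B)$ into a nonlinear second-order PDE for $s$; the key algebraic fact is that when one substitutes $s=x+\cdots$ and collects the coefficient of $x^{k}$, the highest-derivative term contributes $u_k$ with a nonzero numerical coefficient $c_k = (k)(k-d)$ (up to normalization) times $u_k$, plus lower-order data built from $g$, $x$, and $u_1,\dots,u_{k-1}$. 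Hence for $1\le k\le d-1$ we have $c_k\neq 0$ and $u_k$ is uniquely and algebraically determined, which gives existence of $s$ satisfying (i)--(ii) and simultaneously the uniqueness statement (iii): at order $k=d$ the coefficient $c_d$ vanishes, so $u_d$ is free — corresponding exactly to the freedom $s\mapsto s+s^{d+1}A$ — while the order-$x^d$ equation instead becomes a constraint whose failure to vanish identically is precisely the definition of $\mathcal B|_\Sigma$.

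For part (iv) — naturality and the conformal weight — I would argue as follows. If $\hat g=\Omega^2 g$ and $\hat s$ is a solution of the analogous problem for $\hat g$, then $\hat s^{-2}\hat g = (\hat s/\Omega)^{-2} g$, so $\widetilde s:=\Omega^{-1}\hat s$ solves conditions (i)--(ii) for $g$ with the same scalar curvature normalization; by the uniqueness in (iii), $\widetilde s = s + s^{d+1}A$ for some $A\in C^\infty M$. Restricting to $\Sigma$, where $s=0$, gives $\hat s = \Omega\, s$ to the order that matters, and tracing this identification through the order-$x^d$ obstruction equation shows the two obstruction functions are related by $\mathcal B(\hat g)=\Omega^{-d}\mathcal B(g)$ along $\Sigma$ — i.e. $\mathcal B|_\Sigma$ is a conformal density of weight $-d$. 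That it depends only on the conformal embedding data (and not on the auxiliary choice of $x$ or of the representative $g$) then follows because every $u_k$ for $k<d$, and hence the order-$x^d$ obstruction, was produced by a universal natural formula in the jets of $g$ and the embedding; the weight-$(-d)$ transformation law is exactly the statement that these natural expressions assemble into a well-defined section of $\ce\Sigma[-d]$.

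The main obstacle is the bookkeeping at the critical order $k=d$: one must verify that the coefficient $c_d$ of $u_d$ genuinely vanishes (so that $u_d$ drops out and a genuine obstruction appears) rather than merely being small, and that what remains is a well-defined function on $\Sigma$ independent of all the choices made along the way. This is the usual phenomenon in these Fefferman--Graham-type expansions — an indicial-root resonance at the critical weight — and handling it cleanly requires either a careful induction tracking which pieces of $u_1,\dots,u_{d-1}$ can contribute at order $x^d$, or an invariant-theoretic argument showing the obstruction must be a natural conformal density of the stated weight and then identifying it. Since the statement we are asked to prove is quoted from Andersson--Chru\'sciel--Friedrich~\cite{ACF}, I would in fact invoke their result for the existence, uniqueness, and regularity claims (i)--(iii), and supply only the conformal-rescaling computation above to obtain the transformation law in (iv).
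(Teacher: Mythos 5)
Your outline is correct, and it matches what the paper actually does: Theorem~\ref{Sc1} is a quoted result, attributed to Andersson--Chru\'sciel--Friedrich, so deferring to \cite{ACF} for (i)--(iii) and supplying the rescaling computation for (iv) is exactly the level of argument the paper relies on. Your sketch of the formal expansion is also sound --- the indicial structure you describe (unique algebraic solvability of $u_k$ for $1\le k\le d-1$, resonance at $k=d$ producing the obstruction and the $s^{d+1}A$ freedom) is the right one, and your derivation of ${\mathcal B}(\hat g)=\Omega^{-d}{\mathcal B}(g)$ from uniqueness plus $\hat s^{-2}\hat g=(\Omega^{-1}\hat s)^{-2}g$ is the standard and correct argument. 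The one substantive difference from the route the paper emphasizes (the ``second proof'' of \cite{Will1}) is worth noting: instead of expanding the scalar-curvature PDE directly, the paper rewrites $Sc^{s^{-2}g}/(-d(d-1))$ as the tractor square $I_\sigma^2=h(I,I)$ with $I_\sigma=\hat D\sigma$, and iterates $\sigma\mapsto\sigma(1+\sigma^k f_k)$ using the identity $I\cdot\hat D\circ\sigma^k=\sigma^k\,I\cdot\hat D+(\cdots)\sigma^{k-1}$ from the $sl(2)$ boundary calculus; the indicial roots then fall out of commutation relations rather than a direct Laplacian computation, and --- the real payoff --- the conformal covariance in (iv) is automatic because $\sigma$, $I$, and $I^2$ are manifestly conformally invariant objects, so no separate rescaling argument is needed. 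Your version trades that structural economy for elementary explicitness; both are valid. The only small caution in your write-up is the indicial coefficient: a perturbation $s\mapsto s+x^{k+1}w$ shifts $I^2$ at order $x^k$ by a multiple of $(k+1)(d-k)w$, not $k(k-d)w$; the roots you need (nonvanishing for $1\le k\le d-1$, vanishing at $k=d$) are unaffected, but ``up to normalization'' should really be ``up to normalization and with the correct root set,'' since the roots are what the whole argument hinges on.
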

\noindent
In the above $\ext$ denotes the exterior derivative and $Sc^g$ the scalar curvature of the metric~$g$. 
The problem of finding a metric $g^o=s^{-2}g$ where $Sc^{g^o}=-d(d-1)$ and the function $s$ obeys Condition~(\ref{cond1}) above
is termed the {\it singular Yamabe problem}~\cite{Maz}.
The restriction of the function~$B$ to~$\Sigma$ is called the {\it obstruction density}. It is an example of a (local) {\it conformal hypersurface invariant}; generally these are defined as any section $V_\Sigma$ of a vector bundle over $\Sigma$ that is 
uniquely determined by the embedding data $\Sigma\hookrightarrow(M,\cc)$,  and subject to, for some {\it weight}~$w\in {\mathbb R}$,  the conformal covariance property
$$
V_\Sigma(\hat g) = \Omega^{w} V_\Sigma(g)\, ,
$$
where $\hat g=\Omega^2 g$ with $0<\Omega\in C^\infty M$; see~\cite{Will1} for the detailed 
definition. 
 
A version of the above theorem was  proved (in a slightly different setting) by Andersson, Chru\'sciel and Friedrich~\cite{ACF}. A second proof, given in~\cite{Will1}, shows how  the theorem can be employed to  study  conformal embeddings.  A first  key point  is that the uniqueness property of the function~$s$ implies that 
conformal hypersurface invariants can be encoded by its jets.
The result of~\cite{Will1}
relies on re-expressing the scalar curvature~$S\!\hh c^{\,s^{-2} g}$ (divided by $-d(d-1)$) as the ``square'' $h(I,I)$ of a section $I$ of a certain, rank $d+2$, vector bundle~$\ct M$ defined below.
This maneuver both gives a simple proof of the above theorem, and enables the use of the conformal boundary calculus of~\cite{GW}.

Before  defining the tractor bundle first recall, as alluded to in the introduction, that  
a conformal density~$\varphi$ of weight $w$ means an equivalence class $\varphi=[g;f]$ where
$(\hat g,\hat f) \sim (g,f)\in \varphi$ when $\hat g=\Omega^2 g$ and $\hat f=\Omega^w f$ for some $0<\Omega\in C^\infty M$. This is a section of an oriented associated line bundle $\ce M[w]\to M$, for the~${\mathbb R}_+$ group action on $t\in {\mathbb R}$ given by  $t\mapsto s^{-w} t$,  to the~${\mathbb R}_+$ principal bundle ${\mathcal Q}$ given by the ray subbundle of  $\odot^2 T^*M$ determined by the conformal class of metrics~$\cc$.
(At each $P\in M$, a pair of metrics $\hat g$ and $g\in \cc$ determine an element $s\in {\mathbb R}_+$ by the formula $s^d=\Vol(\hat g)(P)/\Vol(g)(P)$, where $\Vol(g)$ denotes the volume form of a metric $g$.)
Importantly, the {\it conformal metric} $\bm g_{ab}$ is the section of $\Gamma(\odot^2T^*M[2])$   tautologically defined by the equivalence class $\bm g_{ab}=[g;g_{ab}]$.  
Note that  the conformal density bundle $\ce M[w]$ is isomorphic to the $-(w/d)$th power of the bundle of volume forms ${(\mathcal Vol}M)^{-\frac wd}$  on $M$. 
Then the Levi-Civita connection 
extends to act on densities simply by writing these as sections $\varphi\in{(\mathcal Vol}M)^{-\frac wd}$.

Now, given a conformal $d$-manifold $(M,\cc)$ and a metric $g\in \cc$,  the standard tractor bundle~$\ct M$ 
is isomorphic to the rank $d+2$  
direct sum of vector bundles
$$
\ct M \stackrel{g}{\cong} \ce M[1]\oplus TM[-1]\oplus \ce M[-1]\, .
$$ 
Denoting a section $T$ of the above by $T^A\stackrel g=(\tau^+,\tau^a,\tau^-)$, changing to a conformally related metric $\hat g = \Omega^2 g$, the above isomorphism then changes according to 
$$T\stackrel {\hat g}=\big(\tau ^+,\tau^a+\Upsilon^a t^+,t^--\tau^a \Upsilon_a -\frac12 \Upsilon^a \Upsilon_a t^+\big)\, ,
$$
where $\Upsilon :=\ext \log \Omega \in \Gamma(T^*M)$
and indices are moved using the conformal metric.
Note that the first slot of $T$ in the above display defines a weight one
density $\tau^+\in \ce M[1]$
independently of the choice of $g$ giving the isomorphism.
When a weight one density $\tau>0$, we say that it is a {\it true scale}
because this determines the metric $g_\tau\in \cc$ for which $\tau = [g_\tau,1]$. 
When the weight one density determined by the first entry in $\Gamma(\ce M[1])$ of a standard tractor $T$ is a true scale, we say that $T$ {\it determines a scale}. 
The above display implies that the section $X\stackrel g= (0,0,1)\in \Gamma(\ct M[1])$ canonically  defines a tractor, 
termed the {\it canonical tractor}. Also, the above isomorphism between the tractor bundle and the direct sum given by a choice of $g\in \cc$ determines a pair of tractors~$Y\in \Gamma(\ct M[-1])$ and $Z\in \Gamma(T^*M\otimes \ct M[1])$---termed {\it injectors}---such that 
$$T^A\stackrel g=(\tau^+ ,\tau^a,\tau^-)= \tau^+ Y^A + Z^A_a \tau^a + X^A \tau^-\, .$$

The tractor bundle is naturally equipped with a  bundle metric $h:\Gamma(\odot^2 \ct M)\to C^\infty M$ defined acting on $T^A\stackrel g=(\tau^+,\tau^a,\tau^-)$ and $U^A\stackrel g=(\mu^+,\mu^a,\mu^-)$ by
$$
h(T,U)=h_{AB} T^A U^B \stackrel g=\tau^+ \mu^-+ {\bm g}(\tau,\mu) + \tau^- \mu^+\, .
$$
Note that $h$ is a fiberwise symmetric, non-degenerate but non-positive bilinear form; it is called the {\it tractor metric}. 
Weighted and tensor tractor bundles, denoted by $\ct^\Phi M[w]$, are defined in the obvous way. Here $\Phi$ labels some tensor product of the standard tractor bundle. The tractor metric is used to move abstract tractor indices in exactly the same way as for the metric and standard tensors.

The standard tractor bundle comes equipped with a canonical {\it tractor connection}~$\nabla^\ct$. Given $g\in \cc$, acting on a section $T^B\stackrel g=(\tau^+,\tau^b,\tau^-)$, this  is defined by
$$
\Gamma(T^*M\otimes \ct M)\ni \nabla^\ct_a T^B  \stackrel g= \big(\nabla_a\tau^+-\tau_a, \nabla_a \tau^b + \delta^b_a \tau^- + P^b_a\hh  \tau^+,\nabla_a \tau^- - P_a^b \tau_b\big)\, .
$$
On the right hand side above $\nabla$ is the Levi-Civita connection on densities described earlier and~$P$ is the Schouten tensor of $g$.

The tractor connection is particularly useful because of the following characterization of the Einstein condition (recall that a metric is {\it Einstein} iff multiplying it by some function yields the Ricci tensor; note that necessarily such a function is constant):
\begin{theorem}[Bailey--Eastwood--Gover~\cite{BEG}]
A conformal class of metrics admits an Einstein representative iff the standard tractor bundle equipped with its canonical tractor connection admits a parallel section that determines a scale.
\end{theorem}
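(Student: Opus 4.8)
The plan is to prove both implications by working in the trivialization of $\ct M$ determined by a choice of metric $g\in\cc$, using only the explicit formula for $\nabla^\ct$ recorded above together with the relation $\Ric_{ab}=(d-2)P_{ab}+g_{ab}J$.

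First I would treat the easy direction. Suppose $\cc$ contains an Einstein metric $g$, so $\Ric_{ab}=\lambda g_{ab}$ with $\lambda$ necessarily constant. Tracing the Schouten--Ricci relation gives $J=\tfrac{\lambda d}{2(d-1)}$ (constant) and $P_{ab}=\tfrac Jd g_{ab}$. I would then simply exhibit the candidate $I^A\stackrel g=(1,0,-\tfrac Jd)$ and check $\nabla^\ct_a I^B=0$ slot by slot: the top slot is $\nabla_a 1-0$, the middle slot is $-\tfrac Jd\hh\delta^b_a+P^b_a$, and the bottom slot is $\nabla_a(-\tfrac Jd)$, all of which vanish since $P$ is pure trace and $J$ is constant. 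Because the first slot is the constant density $1\in\Gamma(\ce M[1])$ associated to $g$, it is a true scale, so $I$ determines a scale.

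For the converse, let $I$ be a parallel standard tractor determining a scale. Since the weight-one density in the first slot of a tractor is defined independently of the choice of $g$, this slot is a well-defined true scale $\tau>0$; let $g:=g_\tau\in\cc$ be the metric it determines, so in the $g$-trivialization $I^A\stackrel g=(1,\mu_a,\rho)$. Writing $\nabla^\ct_a I^B=0$ in this scale, the top slot forces $\mu_a=\nabla_a 1=0$, the middle slot then gives $\rho\hh\delta^b_a+P^b_a=0$, i.e. $P_{ab}=-\rho\hh g_{ab}$, and the bottom slot gives $\nabla_a\rho=0$. Hence $P_{ab}$, and therefore $\Ric_{ab}=(d-2)P_{ab}+g_{ab}J$, is a constant multiple of $g$, so $g$ is Einstein.

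I do not expect a genuine obstacle: once $\nabla^\ct$ is expanded in components the computation is immediate in both directions. The only point needing care is conceptual rather than computational, namely that ``determines a scale'' is an invariant condition (it involves only the $\Gamma(\ce M[1])$ quotient of $\ct M$, which is scale independent), so one is entitled to pass to the distinguished metric $g_\tau$ in which a parallel tractor takes the normal form $(1,0,\rho)$; this is exactly what collapses the general prolongation of the parallel equation --- whose trace-free part is the almost-Einstein equation $(\nabla_a\nabla_b\sigma+P_{ab}\sigma)_\circ=0$ for the first-slot density $\sigma$ --- to the honest Einstein condition. If one preferred not to change scale, one would instead keep $I^A\stackrel g=(\sigma,\nabla_a\sigma,\rho)$ with $\sigma>0$, deduce $\rho=-\tfrac1d(\Delta\sigma+J\sigma)$ and $(\nabla_a\nabla_b\sigma+P_{ab}\sigma)_\circ=0$ from the remaining slots, and then check directly that $\sigma^{-2}g$ is Einstein.
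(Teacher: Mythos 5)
Your proof is correct; the computation in both directions is exactly the standard argument. Note that the paper does not prove this statement---it is quoted as a known theorem with a citation to Bailey--Eastwood--Gover~\cite{BEG}, where a proof along precisely the lines you give (prolong the parallel-tractor equation, specialize to the scale determined by the first slot, read off the Einstein condition from the pure-trace Schouten tensor) appears. So there is nothing to compare against in this paper itself; your argument fills the gap cleanly and matches the original reference.
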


There is also a very useful tractor characterization of the singular Yamabe problem. 
For that we need to introduce one further operator. To that end observe that the parallel condition~$\nabla^\ct T=0$ implies that
$$
T= \hat D \tau\, , 
$$
where the operator $\hat D:\ce M[1]\to \ct M$ is given by $1/d$ times  
%
the {\it Thomas  $D$-operator}, which is defined acting on weight $w$ tractors $T\in \ct^\Phi M[w]$ by
\begin{equation}\label{Dnolog}
D^A T\stackrel g=\Big(w(d+2w-2) T,
(d+2w-2) \nabla^\ct T, - \Delta^\ct T - wJ T\Big)
\, .
\end{equation}
Here the {\it tractor Laplacian} $\Delta^\ct:=\bm g^{ab} \nabla^\ct_a \nabla^\ct_b$. Also, for any weight $w\neq 1-\frac d2$, we denote~$\hat D:=(d+2w-2)^{-1} D$.
We term any standard tractor given by
$$
I_\sigma=\hat D \sigma\, ,
$$
where $\sigma\in \Gamma(\ce M[1])$, a {\it scale tractor} if
$I_\sigma$ is nowhere vanishing. In which case $\sigma$ is termed a {\it generalized scale}. 
Of special importance are generalized scales defined possibly in some neighborhood of a hypersurface $\Sigma$.
In particular
a generalized  scale $\sigma=[g;s]\in \Gamma(\ce M[1])$  with zero locus~$\Sigma$, 
such that any representative function $s$ is {\it defining} for $\Sigma$, meaning that $s$ obeys Condition~(\ref{cond1}) of Theorem~\ref{Sc1}, is termed a {\it defining density} for $\Sigma$. 
In these terms, the singular Yamabe problem is a normalization condition for scale tractors:
\begin{problem}\label{we have problems}
Given a conformal hypersurface embedding $\Sigma\hookrightarrow (M,\cc)$, find a defining density $\sigma$ for $\Sigma$ such that
$$
I^2_\sigma:=h(I,I)=1\, .
$$
\hfill$\blacksquare$
\end{problem}
\noindent
The equivalence of this problem to the singular Yamabe problem follows by noting that along~$M\backslash\Sigma$ one has
$
I^2_{[g;s]}\stackrel {s^{-2}g}=-d(d-1) Sc^{s^{-2}g}
$~\cite{Goal}. 
Theorem~\ref{Sc1} shows that Problem~\ref{we have problems} has an asymptotic solution 
\begin{equation}\label{stip}
I^2=1+\sigma^d B\, ,
\end{equation}
where $B$ is a weight $-d$ conformal density whose restriction to $\Sigma$ gives the obstruction density ${\mathcal B}_\Sigma$. In the case that the hypersurface $\Sigma$ is separating (so $M=M_-\sqcup \Sigma \sqcup M_+$), the existence of one-sided solutions along $M_+$ has been established by Loewner--Nirenberg when $M$ is a conformally flat sphere~\cite{Loewner} and for general structures in~\cite{Aviles,Maz,ACF}. 
Densities $\sigma$ solving Problem~\ref{we have problems} are termed {\it unit conformal defining densities} (we also use this terminology when only the data of $\sigma$ on $\Sigma\sqcup M_+$ is given). Densities $\sigma$ solving only Equation~\ref{stip} are termed \textit{asymptotic unit defining densities}.

Solutions to Problem~\ref{we have problems} only subject to $I^2=1+{\mathcal O}(\sigma^2)$ are also distinguished since then the tractor
$$
N:=I|_\Sigma \in \Gamma(\ct M|_\Sigma)
$$
gives an analog of the unit conormal---but now determined by the conformal embedding $\Sigma\hookrightarrow(M,\cc)$~\cite{Goal}. This recovers the  {\it normal tractor}~$N$ of~\cite{BEG}. 
For a choice of $g\in \cc$, one has $N\stackrel g= (0,\hat n,-H)$ where $\hat n$ and $H$ are the unit conormal and mean curvature determined by $g$.

The normal tractor can be used to establish an isomorphism between the bundle $\ct^\perp M|_\Sigma$ and~$\ct \Sigma$~\cite{Goal}, where $\perp$ refers to tractors $T$ along $\Sigma$ subject to $h(T,N)=0$ while $\ct \Sigma$ is the tractor bundle of the conformal structure $\cc_\Sigma$ induced along $\Sigma$ from $\cc$. For this reason the normal tractor is fundamental to the study of conformal hypersurface invariants.

We have now assembled all the ingredients required to build the extrinsic Paneitz operator in Display~\nn{magna_carta}. In fact, more generally, there exists a sequence of extrinsically-coupled Laplacian powers defined along conformally embedded hypersurfaces:

\begin{theorem}[Gover--Waldron~\cite{GW}]\label{Pk}
Let $\sigma$ be a unit conformal defining density for the embedding
$\Sigma\hookrightarrow (M,\Sigma)$,  and let $k\leq d-1$ be even. Then 
the operator
$$
P^{\Sigma\hookrightarrow M}_{k}:=
\Big[(I\csdot \hat D)^{k} \, \hh\scalebox{.5}{$\bullet$}\, \Big]\Big|_\Sigma 
$$
gives a well-defined map (termed an extrinsically-coupled conformal Laplacian power) 
$$
P^{\Sigma\hookrightarrow M}_{k}
:\Gamma\Big(\ct^\Phi M\Big[\frac{k-d+1}{2}\Big]\Big)\Big|_\Sigma\to
\Gamma\Big(\ct^\Phi M\Big[\frac{-k-d+1}{2}\Big]\Big)
\Big|_\Sigma\, ,
$$
determined by the local embedding data.
Moreover, $P^{\Sigma\hookrightarrow M}_{k}$ is given by a non-zero multiple of the~$k/2$ power of the hypersurface tractor Laplacian plus lower derivative order terms.
\end{theorem}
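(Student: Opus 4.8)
The plan is to reduce the statement to an operator identity coming from an $\mathfrak{sl}(2)$-algebra generated by the Laplace--Robin operator $\bm\delta:=I\csdot D$ and multiplication by $\sigma$, to read off well-definedness and tangentiality from that identity at the critical weight, and then to obtain the leading term by passing to a flat, totally geodesic model.

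\emph{Set-up.} By Theorem~\ref{Sc1} and Problem~\ref{we have problems} there is, near $\Sigma$, a unit conformal defining density $\sigma$, determined to order $\sigma^d$ by the embedding. I would fix $g\in\cc$ and, on $M\backslash\Sigma$, put $g^o=\sigma^{-2}g$, so $\sigma=[g^o;1]$ and, since $I^2_\sigma=1$, also $J^{g^o}=-d/2$. A direct computation from the definition~\nn{Dnolog} of the Thomas operator, from $I=\hat D\sigma$, and from the tractor metric then gives, acting on weight-$w$ tensor tractors,
$$
I\csdot D \;\stackrel{g^o}{=}\; -\Delta^\ct + w(d+w-1)\,,
\qquad
I\csdot D \;\stackrel{g}{=}\; -\sigma\,\Delta^\ct + (d+2w-2)\,(\nabla^a\sigma)\,\nabla^\ct_a + (\text{zeroth order})\,.
$$
In particular $\bm\delta:=I\csdot D$ preserves smoothness of scale-$g$ representatives, so $\bm\delta^k$ followed by restriction to $\Sigma$ is meaningful; moreover $I\csdot\hat D=(d+2w-2)^{-1}\bm\delta$ on weight $w$, where the factors $d+2(w-j)-2=k-1-2j$, $0\le j\le k-1$, are all nonzero since $k$ is even.

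\emph{The algebra and tangentiality (the crux).} Writing $x$ for multiplication by $\sigma$ and $N$ for the weight operator, I would deduce from the formulae above (together with $D\sigma=d\,I$, so $I\csdot D\sigma=d\,I^2_\sigma$) the relations $[\bm\delta,x]=d+2N$, $[N,x]=x$, $[N,\bm\delta]=-\bm\delta$, an $\mathfrak{sl}(2)$ with $e=\bm\delta$, $f=x$, $h=-(d+2N)$. An induction on $k$ then yields, for $T$ of weight $w-1$ (so $\sigma T$ has weight $w$),
$$
\bm\delta^k(\sigma T)\;=\;\sigma\,\bm\delta^k T\;+\;k\bigl(d+2(w-1)-k+1\bigr)\,\bm\delta^{k-1}T\,.
$$
At $w=\tfrac{k-d+1}{2}$ the scalar coefficient is $k\bigl(d+(k-d+1)-2-k+1\bigr)=0$, so $\bm\delta^k(\sigma T)$ vanishes on $\Sigma$. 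Hence if $\Psi,\Psi'\in\Gamma(\ct^\Phi M[\tfrac{k-d+1}{2}])$ agree along $\Sigma$, then $\Psi-\Psi'=\sigma T$ and $\bm\delta^k(\Psi-\Psi')|_\Sigma=0$; so $[\bm\delta^k\,\bullet\,]|_\Sigma$, equivalently $[(I\csdot\hat D)^k\,\bullet\,]|_\Sigma$, descends to a well-defined operator $\Gamma(\ct^\Phi M[\tfrac{k-d+1}{2}]|_\Sigma)\to\Gamma(\ct^\Phi M[\tfrac{-k-d+1}{2}]|_\Sigma)$ (the target weight since each $I\csdot\hat D$ lowers weight by one). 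Naturality and conformal invariance of $\hat D$, $I_\sigma$ and the tractor metric, together with $\sigma$ being an invariant of the embedding, then give dependence only on local embedding data. The step I expect to be the main obstacle is precisely here: $I^2_\sigma=1$, and hence the $\mathfrak{sl}(2)$ relations and the displayed identity, hold only modulo the $\sigma^dB$ defect of~\nn{stip} (and the $\sigma^{d+1}$-ambiguity in $\sigma$ of Theorem~\ref{Sc1}(iii)), and one must check that after the remaining copies of $\bm\delta$ --- each lowering $\sigma$-order by at most one --- act on such a term, the outcome still vanishes on $\Sigma$; this is exactly what forces $k\le d-1$.

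\emph{The leading term.} From the first displayed formula, in the scale $g^o$ one has $(I\csdot\hat D)^k=C\prod_{j=0}^{k-1}\bigl(-\Delta^\ct+\mu_{w-j}\bigr)$, with $C=\prod_{j=0}^{k-1}(k-1-2j)^{-1}\neq0$ and $\mu_v:=v(d+v-1)$ --- a degree-$k$ polynomial in $\Delta^\ct$. I would pin down its symbol along $\Sigma$ by specializing to $(M,g)=(\mathbb{R}^d,\text{flat})$ with $\Sigma=\mathbb{R}^{d-1}$ totally geodesic and $\sigma=[g;x_d]$ (for which $I^2_\sigma\equiv1$ and $g^o=x_d^{-2}g$ is hyperbolic space): the restriction of $\prod_j(-\Delta^\ct+\mu_{w-j})$ to the boundary is, by the standard scattering/GJMS computation with exactly these indicial parameters, a nonzero multiple of $\bar\Delta^{k/2}$ on the conformally flat boundary, acting trivially on the tractor index. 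Since $P^{\Sigma\hookrightarrow M}_k$ is a natural operator, its top tangential symbol is therefore $(-|\xi|^2_{\bar g})^{k/2}$ times a nonzero constant; as $\bar\Delta^\ct$ agrees with $\bar\Delta$ to top order, this gives the claimed statement that $P^{\Sigma\hookrightarrow M}_k$ equals a nonzero multiple of the $k/2$-th power of the hypersurface tractor Laplacian plus terms of lower derivative order. The only remaining bookkeeping is to see that no cancellation occurs in this symbol, which is transparent in the hyperbolic model.
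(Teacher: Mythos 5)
Your proposal is correct and follows exactly the route the paper indicates for this (cited) theorem: tangentiality at the critical weight via the $\mathfrak{sl}(2)$ solution-generating algebra of $I\csdot D$ and multiplication by $\sigma$, with the commutator coefficient $k(d+2w-k-1)$ vanishing precisely at $w=\tfrac{k-d+1}{2}$ modulo the $\sigma^{d}B$ defect, and the leading term read off from the factorization of $(I\csdot\hat D)^k$ into shifted Laplacians of $g^o$. The paper defers the proof to \cite{GW} but explicitly names the $\mathfrak{sl}(2)$ argument you use, so the approach is essentially the same.
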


\begin{remark}
In~\cite{Will1},
the operator ${\sf P}_k^{\sss\Sigma\hookrightarrow M}$ was
 defined exactly as above, but the operator $(I\csdot D)^k$, 
 was used so that the
 above theorem extends to odd values of $k$, 
save for a modification of the final hypersurface Laplacian power statement.
In the same work, it was shown that for $k\geq d$, there exist 
extrinsically-coupled conformal Laplacian powers; these are discussed in Section~\ref{doIcogito}.
\end{remark}

When $k=4$ in the above theorem, the 
 extrinsically-coupled conformal squared Laplacian $\PanE$ becomes  the extrinsically-coupled Paneitz operator,  
 a formula for which, acting on general tractors along~$\Sigma$, is given  and proved in Section~\ref{prufs}. 

\smallskip

Theorem~\ref{Pk}
 embodies a general operator phenomenon: namely, when an operator ${\sf Op}$ acting on the section space of any weighted vector bundle ${\mathcal V}M[w]={\mathcal V}M\otimes \ce M[w]$ satisfies
 $$
 {\sf Op}\circ \sigma = \sigma \circ {\sf Op}'\, ,
 $$
for some 
${\sf Op}'$ is a smooth operator on $\Gamma({\mathcal V}M[w-1])$\, ,
and the unit conformal density is viewed as a multiplicative operator $\Gamma({\mathcal V}M[w-1])\to {\mathcal V}M[w]$, it holds that ${\sf Op}$ canonically gives a well-defined
operator on the space
$
\Gamma({\mathcal V}M[w])\big|_\Sigma\, .
$
In this case, we shall call the operator ${\sf Op}$ {\it tangential}. That the operator $ (I\csdot \hat D)^{k}$ acting on $\Gamma\big(\ct^\Phi M\big[\frac{k-d+1}{2}\big]\big)$ is  tangential
can be easily established using the~${\it sl}(2)$
solution generating algebra described in~\cite{GW}.

\begin{remark}\label{PIB}
Acting on functions $f\in \Gamma(\ce M_+[0])\cong C^\infty M_+$, the
Laplacian of the singular metric $g^o=\bm g/\sigma^2$
can be re-expressed in terms of the Thomas $D$-operator and the scale tractor $I$ of $\sigma$ according to
$$
\Delta^{g^o}\!f=-\sigma I\csdot D f\, .
$$
Notice that the operator $I\csdot D$ continues smoothly to the boundary, so it is propitious to study the boundary problem $I\csdot D f=0$ on  $M$ where  $f\in \Gamma(\ce M[0])$. More generally, when $f\in \Gamma(\ce M[w])$,  along $M_+$ we may consider $\varphi=f/\sigma^w \in C^\infty M_+$. Then
$$
\sigma^{w}
\Big(\Delta^{g^o}+\frac{w(d+w-1)}{d(d-1)}\hh Sc^{g^o}\Big)
 \varphi
 =
 -\sigma I\csdot D f 
 \, .
$$
%
%
%
%
%
%
In~\cite{GW}, the generalization 
$$I\csdot D\hh T=0$$
of the Laplace Equation~\nn{siren} to tractors $T\in \Gamma(\ct M[0])$ is studied. Also 
in that setting, the obstruction to smooth solutions with prescribed values of $T|_\Sigma$ was found to be given, for dimension~$d=5$  conformally compact structures,  by
$$
\Big(\frac1 {I^2} I\csdot D\Big)^4 T\big|_\Sigma\, .
$$ 
When the singular metric $g^o$ has constant scalar curvature $-d(d-1)$, the operator appearing above is proportional to the dimension-five, extrinscally-coupled Paneitz operator $\PanE$.
\end{remark}

Our first application of $\PanE$ to anomalies (in Section \ref{Suitable}) relies on an extension
of $\PanE$ to log densities; such densities are elaborated on
in~\cite{GW}.  The log density bundle of weight $w$ is the associated
line bundle ${\mathcal F}M[w]\to M$ to ${\mathcal Q}$ for the
${\mathbb R}_+$ group action on $t\in {\mathbb R}$ given by $t\mapsto
t- w \log s$. Sections are equivalence classes $[g;\ell]$ where $(\hat
g,\hat \ell) \sim (g,\ell)\in f$ when $\hat g=\Omega^2 g$ and $\hat
\ell=\ell + w \log \Omega$ for some $0<\Omega\in C^\infty M$. In
particular, for any weight $w=1$ conformal density $0<\tau=[g;t]\in
\Gamma(\ce M[1])$, $\log \tau = [g;\log t]$ is a weight one log
density. The Thomas-$D$ operator also maps weight $w$ log densities to
weight $-1$ tractors according to
\begin{equation}\label{Dlog}
D \lambda\stackrel{g}=
\Big((d-2)w,
(d-2) \nabla^{g} \lambda, - \Delta^{ g} \lambda - wJ^{g} \Big)\in \Gamma(\ct M[-1])
\, .
\end{equation}
In the above the Levi-Civita connection acting on a log density is defined analogously to the discussion above for conformal densities. Equivalently, a weight $w$ log density $\lambda$ can uniquely be expressed, in the obvious way, as $\lambda = \log \mu$ where $\mu$ is a strictly positive weight $w$ conformal density. Then $\nabla^{g}\lambda = 
(\nabla^{ g} \mu) /\mu\in\Gamma(\ce M[0]) $. We define $\hd \lambda$ by $D \lambda / (d-2)$.

\subsection{Conformal embeddings}
\label{conbed}

Here we summarize key aspects of the local invariant theory for
conformally embedded hypersurfaces $\Sigma\hookrightarrow (M,\cc)$; see~\cite{Will1,Will2,BGW} for a detailed discussion. Firstly, each~$g$ in $\cc$ determines a unit conormal~$\nu$
(uniquely by requiring that
this points towards $M_+$); this gives a one-form-valued weight one density $\hat n=[g;\nu] \in \Gamma(T^*M[1])|_\Sigma$.
Thanks to  the isomorphism between
$T\Sigma$ and the orthogonal complement 
of $\hat n$ in
  $TM|_\Sigma$, we may 
use the same abstract indices  for hypersurface tensors.
Then the weighted first fundamental form 
$$
\bar{\bm g}_{ab}={\bm g}_{ab}-\hat n_a \hat n_b \in \Gamma(T^*\Sigma[2])
$$
recovers the  conformal metric 
of $\Sigma$.
%
%
%

Given $g \in \cc$, the second fundamental form $\II_{ab}:=\bar g_a{}^c \nabla_c \nu^{\rm e}_b|_\Sigma$, where $\nu^{\rm e}$ is  any extension of the unit conormal (of $g$) to $M$. 
We  denote $\nabla^\top_a:=\bar g_a{}^c \nabla_c$ and, in general, use this notation when projecting with the first fundamental form. Given any extension $v_{\rm e}$ of $\bar v\in \Gamma(T\Sigma)$, the Gau\ss\ formula states 
\begin{equation}\label{GF}
\bar \nabla_a \bar v^b \eqSig {}\nabla^\top_a v_{\rm e}^b + \nu^b \II_{ac}  v_{\rm e}^c\, .
\end{equation}
Here and throughout bars are used to indicate hypersurface
quantities, such as the Levi--Civita connection $\bar \nabla$ of $\bar g$. The notation $\eqSig$ indicates equality along the hypersurface. 
The trace-free part of the second fundamental form yields a weight one, symmetric tensor-valued density 
$$
\IIo_{ab}\in \Gamma(\odot^2_\circ T^* \Sigma[1])\, .
$$

The equations of Gau\ss, Codazzi, Mainardi, and Ricci can be uniformly recast in terms of conformally invariant parts. When $d\geq 4$, the first three of these are 
\begin{align}
W^\top_{abcd} &= 
\bar W_{abcd} - 2 \IIo_{a[c} \IIo_{d]b} 
-
2\bar {\bm g}_{a[c} F_{d]b}
+
2\bar {\bm g}_{b[c} F_{d]a}\, ,
\nonumber
\\[1mm]
W^\top_{abc\hat n} &= 2\bar \nabla_{[a}\IIo_{b]c}-\tfrac2{d-2}\hh \bar{\bg}_{c[a} \bar \nabla\csdot \IIo_{b]}\, ,
\label{trfreecod}\\[1mm]
W_{\hat n ab \hat n}^\top &=\IIo^2_{(ab)\circ} -(d-3) F_{(ab)\circ}\, .\nonumber
\end{align}
The left hand sides of the the above display are   tensor densities of respective weights $2, 1$, and~$0$. Indeed, the trace-free covariant curl appearing on the right hand side of the second line above maps $\Gamma(\odot^2_\circ T^* \Sigma[1])$
to $\Gamma( T^*\Sigma \otimes_\circ \wedge^2 T^* \Sigma[1])$.
 For efficiency, we denoted here a density by the same symbol as an equivalence class representative for a given $g\in \cc$; the right hand side ought really read $\big[g;2\bar \nabla_{[a}\iio_{b]c}-\tfrac2{d-2}\hh \bar{g}_{c[a} \bar \nabla\csdot \iio_{b]}\big]$ for $\IIo=[g;\iio]$.
We will repeat this maneuver without comment in what follows.
The tensor density $F_{ab}\in \Gamma(\odot^2 T^*\Sigma[0])$ is the Fialkow tensor~\cite{Fialkow,Vyatkin}, given when $d\geq4$, for any $g\in \cc$ by $\Rho^\top-\bar \Rho + H\IIo +\frac12 \bar g H^2$.
Finally note that the second equation displayed above actually holds also when $d=3$. The identity is that the trace of the Fialkow tensor equals $\frac{1}{2(d-2)}\IIo^2=\frac{K}{2(d-2)}$; thus
\begin{align*}
\tfrac{1}{2(d-2)}\hh K&=J-P_{\hat n\hat n} -\bar J +\tfrac{d-1}2 H^2\, .
\end{align*}
Specialized to surfaces in ${\mathbb R}^3$, the above is Gau\ss'  {\it Theorema Egregium}.

\medskip

The trace-free parts of the second fundamental form 
and Fialkow tensor are examples of what we term conformal fundamental forms. Generally, these are rank two,  trace-free
densities determined by the conformal embedding $\Sigma\hookrightarrow M$ and encode normal derivatives of the ambient conformal metric along $\Sigma$. The question of which conformal fundamental forms can be determined canonically is subtle and interesting~\cite{BGW}.
When $d\geq 4$, canonical second and third fundamental forms always exist and are just given by $\IIo_{ab}\in \Gamma(\odot^2_\circ T^*\Sigma[1])$
and 
$$
\IIIo_{ab}:=-(d-3) \mathring F_{ab}\in \Gamma(\odot^2_\circ T^*\Sigma[0])\, ,
$$
respectively. 
For $d\geq 6$, 
the fourth fundamental form is
\begin{align}
\label{fourth}
\begin{split}
\IVo_{ab}
&:=-(d-4)(d-5) C_{\hat n(ab)}^\top  - (d-4)(d-5) H W_{\hat n ab \hat n} - (d-4) \bar \nabla^c W_{c(ab) \hat n}^\top 
\\[1mm]
&\phantom{=}
+ 2 W_{c \hat n \hat n (a}^{} \IIo_{b)\circ}^c + (d-6) \bar{W}^c{}_{ab}{}^d \IIo_{cd}
 - \tfrac{d^2 - 7d+18}{d-3} \IIIo_{(a} \csdot \IIo_{b)\circ}
 + \tfrac{d^3-10d^2+25d-10}{(d-1)(d-2)} K\IIo_{ab}
\\[1mm]
&
\in \Gamma(\odot^2_\circ T^*\Sigma[-1])\, .
\end{split}
\end{align}

\subsection{Hypersurface tractor calculus}

There are hypersurface tractor analogs of all the above extrinsic curvatures (at least for sufficiently large $d$). 
In particular for $d\geq 4$, in an obvious matrix notation, the {\it tractor second fundamental form}
\begin{equation}\label{LAB}
L^{AB}:\stackrel g=
\begin{pmatrix}
0&0&0\\
0&\IIo^{ab}&
-\tfrac1{d-2} \bar \nabla\csdot \IIo^a\\[1mm]
0& -\tfrac1{d-2} \bar \nabla\csdot \IIo^b &
\frac{\bar \nabla\cdot\bar\nabla \cdot \IIo + (d-2)\bar \Rho^{ab} \IIo_{ab}}
{(d-2)(d-3)}
\end{pmatrix}
\: \in \Gamma(\odot^2_\circ\ct \Sigma[-1])\, , 
\end{equation}
and for $d\geq 5$, the {\it Fialkow tractor}
\begin{equation}\label{FAB}
F^{AB}:\stackrel g=
\begin{pmatrix}
0&0&0\\
0&\Fo^{ab}&
-\tfrac1{d-3} \bar \nabla\csdot \Fo^a\\[1mm]
0& -\tfrac1{d-3} \bar \nabla\csdot \Fo^b &
\frac{\bar \nabla\cdot\bar\nabla \cdot \Fo + (d-3)\bar \Rho^{ab} \Fo_{ab}}
{(d-3)(d-4)}
\end{pmatrix}
\: \in \Gamma(\odot^2_\circ\ct \Sigma[-2])\,  .
\end{equation}
In dimensions $d=6$ and $d\geq 8$, the Fialkow tractor is related to the tractor second fundamental form by
\begin{align*}
(d-3) F_{AB} = \left( L^C_A L_{CB} - \tfrac{1}{d-1} K \overline{h}_{AB} - W_{NABN} \right) - \tfrac{1}{d-1} X_{(A} \hdb_{B)} K
- \tfrac{1}{(d-4)(d-5)} X_A X_B U \, .
\end{align*}
Note that the case $d=7$ is excluded from the above formula because the operator $\hdb$ acting on the rigidity density $K$
is not defined in that case.
A formula for the density $U\in \Gamma(\ce \Sigma[-4])$ in terms of extrinsic curvatures is given in~\cite{BGW}, but will not be needed here.
Once again, the bar notation has been used to indicate that $\bar h$ is the hypersurface's tractor metric and $\bar D$ its Thomas~$D$-operator.

The tractors displayed in Equations~\nn{LAB} and~\nn{FAB} are examples of a general insertion map from symmetric, trace-free, rank two tensor valued densities $t_{ab}$ to  tractors 
$$
\bar q : \Gamma(\odot^2_\circ \Sigma[w+2])\to
\Gamma(\odot^2_\circ \ct \Sigma[w])\, ,
$$
defined when $w\neq 1-d,2-d$
by
\begin{equation*}
\bar q(t)^{AB} 
\stackrel{g}= 
\begin{pmatrix}
0 & 0 & 0 \\
0 &t^{ab} & -\frac{\bar\nabla \cdot t^a}{d+w-1} \\
0 & -\frac{\bar \nabla \cdot t^b}{d+w-1} & \frac{\bar  \nabla \cdot \bar\nabla \cdot t + (d+w-1) \bar P_{ab} t^{ab}}{(d+w-1)(d+w-2)} \end{pmatrix}\, ,
\end{equation*}
which satisifies
$
\bar D_A T^{AB} =0= X_A  T^{AB} 
$.
When $d\geq 6$, we use this map to define the {\it tractor fourth fundamental form}
\begin{equation*}\label{JAB}
J^{AB}:=\bar q(\IVo)^{AB}\in \Gamma(\odot^2_\circ \ct \Sigma[-3])\, .
\end{equation*}

For $d\geq5$,  the tractor 
$W_{ABCD}\in \Gamma\big((\wedge^2 \odot_\circ  \wedge^2) \ct M[-2]\big)$,  given for $g\in\cc$ by
\begin{align*}
W_{ABCD}&\stackrel g=W_{abcd} Z^{a}_A 
Z^{b}_B 
Z^{c}_C 
Z^{d}_D 
+
4C_{abc} Z^{a}_{[A} 
Z^{b}_{B]} 
Z^{c}_{[C} 
X_{D]} 
+
4C_{abc} Z^{a}_{[C} 
Z^{b}_{D]} 
Z^{c}_{[A} 
X_{B]} 
\\&\phantom{=}+
\tfrac{4}{d-4}
B_{ac}Z_{[A}^a X_{B]}
Z_{[C}^c X_{D]}\, ,
\end{align*}
is termed the {\it $W$-tractor} (in~\cite{GOadv} this definition is made with an overall factor $d-4$) and has the algebraic symmetries of the Weyl tensors. 
Here the Bach tensor (for any $d\geq3$) is defined by
$$
B_{ab} :=  \Delta P_{ab} - \nabla^c \nabla_b P_{ca} + P^{cd} W_{adbc}\, .
$$
The 
Gau\ss-Thomas Equation of~\cite{BGW} relates the 
 bulk and hypersurface $W$-tractors for $d>5$ by 
 \begin{align}\label{GTE}
\begin{split} 
W^\top_{ABCD}
\big|_\Sigma =\, \overline{W}_{ABCD}& - 2 L_{A[C} L_{D]B} - 2 \overline{h}_{A[C} F_{D]B} + 2 \overline{h}_{B[C} F_{D]A} - \tfrac{2}{(d-1)(d-2)} \overline{h}_{A[C} \overline{h}_{D]B} K \\[1mm]
&+ 2 X_{[A} T_{B]CD} +2 X_{[C} T_{D]AB}  
- 2 X_A X_{[C} V_{D]B} + 2 X_B X_{[C} V_{D]A} \\
&+ \tfrac{1}{3(d-1)(d-2)} X_A X_{[C} \hdb_{D]} \hdb_B K - \tfrac{1}{3(d-1)(d-2)} X_B X_{[C} \hdb_{D]} \hdb_A K,
\end{split}
\end{align}
where
\begin{align*}
T_{ABC} &:= 2 \hdb_{[C} F_{B]A} + \tfrac{1}{(d-1)(d-2)} \overline{h}_{A[B} \hdb_{C]} K\:\in\: \Gamma(\ct \Sigma \otimes \wedge^2 \ct \Sigma[-3] )\, , 
\end{align*}
and $V_{AB}\in \Gamma(\odot^2 \ct \Sigma[-4])$ is a symmetric tractor built from curvatures such that $X^A V_{AB} = X_B V$ for some $V\in \Gamma(\ce M[-4])$ (see~\cite{BGW}). Here $\top$ denotes projection with respect to the {\it tractor first fundamental form}
$$
{\rm I}^A_B:=\delta^A_B -N^A N_B=\bar h^A_B\, ,
$$
where indices are raised with inverse tractor metric $h^{AB}$. 
Finally note that the relationship between the
hypersurface and bulk Thomas-$D$ operators is given in
Theorem~\ref{thesecondhorcrux}.

\section{Renormalized volume anomaly}\label{Suitable}

Let a closed 4-manifold~$\Sigma$
be conformally embedded as the boundary of a conformal 5-manifold~$(M^+,\cc)$ equipped with any unit conformal defining density $\sigma$. 
 The volume of~$M^+$ with respect to the singular metric $g^o={\bm g}/\sigma^2$ is ill-defined. However, given a choice of metric~$\bar g\in \cc_\Sigma$ 
 where~$\cc_\Sigma$
 is the 
 conformal class
of metrics induced by~$\cc$, 
a unique  renormalized volume~$\Vol_{\rm ren}(M^+,g^o,\bar g)$ can be defined~\cite{RenVol,GrSing}. 
Its dependence on the choice of~$\bar g\in \cc_\Sigma$ is encoded by a certain  ``anomaly operator'' which itself depends only on the local embedding data ~$\Sigma\hookrightarrow (M^+,\cc)$.
In particular, as proved in~\cite[Theorem 2.3]{VolumeII},  changing~$\bar g$ to~$\lambda^2 \bar g$ for some constant~$\lambda\in {\mathbb R}_{+}$ (and remembering to calibrate the definition of $Q_4^{\Sigma\hookrightarrow M}$ to that used previously), 
\begin{equation}\label{volanom}
\Vol_{\rm ren}(M^+,g^o,\lambda^2\bar g)-
\Vol_{\rm ren}(M^+,g^o,\bar g)
\, =\, 
 \frac{\log\lambda}{16} \int_{\Sigma} \ext\!\Vol(\bar g) \: Q_4^{\Sigma\hookrightarrow M}(g) \, .
\end{equation}
Remarkably the integrand on  the right hand side 
is the 
extrinsically-coupled~$Q$-curvature 
given in~\nn{creatine}.

\smallskip

The aim of this section is
to relate $Q_4^{\Sigma\hookrightarrow M}(g)$ to the already-computed quantity ${\mathcal Q}_4^{\Sigma\hookrightarrow M}(g)$.
 For that we need the following technical lemma.

\begin{lemma}\label{ironclad}
Let~$\tau$ be a true scale. Then 
the log density~$\log \tau$ obeys
$$
\log \tau = 
2 \lim_{\varepsilon \rightarrow 0} \frac{\tau^{\varepsilon/2} - 1}{\varepsilon}\quad
\mbox{ and } \quad
 \hd \log \tau = 
 2\lim_{\varepsilon \rightarrow 0} \frac{\hd \tau^{\varepsilon/2}}{\varepsilon}\, .
$$
\end{lemma}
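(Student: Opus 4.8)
The plan is to establish both identities by reducing them to elementary facts about conformal densities and the Thomas-$D$ operator. First I would record the definitions: for a true scale $\tau=[g;t]$ with $t>0$, the log density is $\log\tau=[g;\log t]$, and for any $\varepsilon\in{\mathbb R}$ the power $\tau^{\varepsilon/2}=[g;t^{\varepsilon/2}]$ is a weight $\varepsilon/2$ conformal density. The first identity is then purely a statement at the level of representative functions: in a fixed scale $g$ one has $\log t = \lim_{\varepsilon\to 0}\tfrac{2}{\varepsilon}\big(e^{(\varepsilon/2)\log t}-1\big)$, which is just the standard limit $\lim_{x\to 0}(e^{x}-1)/x=1$ applied pointwise with $x=(\varepsilon/2)\log t$. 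One must check this is scale-independent, but that is automatic since both sides are honest sections of ${\mathcal F}M[1]$: the weight-$\varepsilon/2$ density $\tau^{\varepsilon/2}$ transforms by $\Omega^{\varepsilon/2}$ and the $\varepsilon\to 0$ limit of $\tfrac{2}{\varepsilon}(\tau^{\varepsilon/2}-1)$ picks up exactly the $\log\Omega$ inhomogeneity characteristic of a weight-one log density.

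For the second identity I would apply the Thomas-$D$ operator to the first. The key point is to compare the formula \nn{Dnolog} for $D$ acting on a weight $w=\varepsilon/2$ conformal density with the formula \nn{Dlog} for $D$ acting on a weight-one log density, and take $\varepsilon\to 0$. Concretely, writing $\tau^{\varepsilon/2}=[g;t^{\varepsilon/2}]$ and using $\nabla t^{\varepsilon/2}=\tfrac{\varepsilon}{2}t^{\varepsilon/2-1}\nabla t$, $\Delta t^{\varepsilon/2}=\tfrac{\varepsilon}{2}t^{\varepsilon/2-1}\Delta t+\tfrac{\varepsilon}{2}(\tfrac{\varepsilon}{2}-1)t^{\varepsilon/2-2}(\nabla t)^2$, one computes each slot of $D\tau^{\varepsilon/2}$, multiplies by $\tfrac{2}{\varepsilon}$, and lets $\varepsilon\to 0$; since $t^{\varepsilon/2}\to 1$ the surviving terms are precisely $\big((d-2),\,(d-2)\nabla t/t,\,-\Delta\log t-J\big)$ up to the normalizations, which is exactly $D\log\tau$ from \nn{Dlog} (recall $\nabla t/t=\nabla\log t$). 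Dividing by $(d-2)$ converts $D$ to $\hd$, and by continuity/linearity of $\hd$ one may pass the limit through the operator, yielding $\hd\log\tau=2\lim_{\varepsilon\to 0}\hd\tau^{\varepsilon/2}/\varepsilon$. Note the term $w(d+2w-2)T$ in the top slot of \nn{Dnolog} becomes $\tfrac{\varepsilon}{2}(d+\varepsilon-2)t^{\varepsilon/2}$, and $\tfrac{2}{\varepsilon}$ times this tends to $(d-2)$, matching the top slot of \nn{Dlog}; this is the one spot where the cancellation of the explicit $\varepsilon$ against the $\tfrac{2}{\varepsilon}$ prefactor must be tracked carefully.

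I do not anticipate a genuine obstacle: the whole content is the first-order Taylor expansion of $\tau^{\varepsilon/2}=\exp\big(\tfrac{\varepsilon}{2}\log\tau\big)$ around $\varepsilon=0$, combined with the observation that $D$ (hence $\hd$) is a differential operator with coefficients independent of $\varepsilon$, so it commutes with the $\varepsilon\to 0$ limit. The mildly delicate point is bookkeeping the weight-dependent coefficients in \nn{Dnolog}: one must verify that the $w$-dependent factors $(d+2w-2)$ and the $wJ$ term degenerate correctly to the log-density formula \nn{Dlog} as $w=\varepsilon/2\to 0$, which they do by design of the log-density Thomas-$D$ operator. I would present the argument by first proving the scalar identity, then expanding slot-by-slot, and finally invoking the already-stated formulas \nn{Dnolog} and \nn{Dlog} to identify the limit.
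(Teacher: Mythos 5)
Your proposal is correct and follows essentially the same route as the paper: the first identity is the pointwise limit $\log x = 2\lim_{\varepsilon\to 0}(x^{\varepsilon/2}-1)/\varepsilon$, and the second is the slot-by-slot comparison of $(2/\varepsilon)\hd\tau^{\varepsilon/2}$ (computed from \nn{Dnolog} at weight $w=\varepsilon/2$) with $\hd\log\tau$ from \nn{Dlog} in a fixed scale, letting $\varepsilon\to 0$. Your careful tracking of the $w$-dependent normalizations is exactly the content of the paper's computation, and your remark that one cannot naively ``commute $\hd$ with the limit'' (since the operator's weight dependence changes with $\varepsilon$) is rightly resolved by that explicit verification.
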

\begin{proof}
The first identity  follows from 
the  easily-verified limit
$$
\log x = 
2 \lim_{\varepsilon \rightarrow 0} \frac{x^{\varepsilon/2} - 1}{\varepsilon}
$$ 
valid for any $0<x\in{\mathbb R}$.
The second identity relies on a direct computation in a choice of scale $g \in \cc$: from Equation~\nn{Dlog} we have 
$
\hat D \log\tau \stackrel g = 
\big(1,\tau^{-1}\nabla \tau,(d-2)^{-1} (|\nabla \tau|^2 -\tau\Delta \tau  - \tau^2 J)/\tau^2\big) 
$, while Equation~\nn{Dnolog} 
gives 
$$(2/\varepsilon)\hh \tau^{-\varepsilon/2}\hat D \hh\tau^{\varepsilon/2} \stackrel g = 
\Big(1,\tau^{-1}\nabla \tau,(d+\varepsilon-2)^{-1} \big[(1-\tfrac\varepsilon2)\hh |\nabla \tau |^2 -\tau\Delta \tau  - \tau^2 J\big]/\tau^2\Big)\, . $$
Taking the limit $\varepsilon\to0$, the result then follows.
\end{proof}

\noindent
A corollary of the above is the desired result:

\begin{lemma}\label{fiatlux}
Let~$\sigma$ be a unit conformal defining density for~$\Sigma \hookrightarrow (M^5, \cc)$ and let~$\tau$ be a true-scale. Then,
$$ Q^{\Sigma \hookrightarrow M}_4 (g_\tau) 
=
{\mathcal Q}^{\Sigma \hookrightarrow M}_4 (g_\tau) 
\,.$$
\end{lemma}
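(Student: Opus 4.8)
The plan is to run the extrinsic counterpart of the standard argument identifying (Branson's) \(Q\)-curvature with the zeroth-order term of the critical Paneitz operator (accessed, in the critical dimension, through log densities): realize \(Q_4^{\Sigma\hookrightarrow M}(g_\tau)\) via the \((I\csdot\hd)^4\)-presentation, reduce it to a zeroth-order symbol by Lemma~\ref{ironclad}, and read that symbol off from Corollary~\ref{forIamamerecorollary}. By~\nn{creatine} and Theorem~\ref{Pk} (with the extension of \(\PanE\) to log densities, in which the first Thomas operator acts via~\nn{Dlog}), and since \(\log\tfrac1\tau=-\log\tau\),
\[
Q_4^{\Sigma\hookrightarrow M}(g_\tau)\;=\;\PanE\log\tfrac1\tau\big|_{\Sigma,g_\tau}\;=\;-\big[(I\csdot\hd)^4\log\tau\big]\big|_{\Sigma,g_\tau}\,,\qquad I=I_\sigma\,.
\]
Only the first of the four Thomas operators sees the log character of \(\log\tau\): after it, \(I\csdot\hd\log\tau\) is an ordinary weight-\((-1)\) conformal density, and the remaining \((I\csdot\hd)^3\) acts by the conformal rule~\nn{Dnolog}. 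So I would first apply the second identity of Lemma~\ref{ironclad}, \(\hd\log\tau=2\lim_{\varepsilon\to0}\varepsilon^{-1}\hd\,\tau^{\varepsilon/2}\), to replace the log-density \(\hd\) by the conformal-density one on the positive weight-\(\tfrac{\varepsilon}{2}\) density \(\tau^{\varepsilon/2}\); then, using continuity (indeed rational dependence) of \(I\csdot\hd\) in the weight of its argument --- all normalization constants \(d+2w-2\) staying nonzero for \(d=5\) and the weights \(\tfrac\varepsilon2,\tfrac\varepsilon2-1,\tfrac\varepsilon2-2,\tfrac\varepsilon2-3\) with \(\varepsilon\) small --- I would pull the limit outward to obtain
\[
Q_4^{\Sigma\hookrightarrow M}(g_\tau)\;=\;-2\lim_{\varepsilon\to0}\frac1\varepsilon\,\Big[(I\csdot\hd)^4\,\tau^{\varepsilon/2}\Big]\Big|_{\Sigma,g_\tau}\,,
\]
the four Thomas operators on the right now all of conformal type with the successive weights just listed.

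Next I would evaluate in the scale \(g_\tau\). There \(\tau^{\varepsilon/2}\stackrel{g_\tau}{=}1\) is the constant function, so every genuine derivative term of \((I\csdot\hd)^4\) kills it and, upon restriction to \(\Sigma\), only the zeroth-order (multiplication) part of the operator survives. That part, for \((I\csdot\hd)^4\) on weight-\(w\) densities in dimension \(5\), is supplied by the general-\(d\)/general-weight computation underlying Corollary~\ref{forIamamerecorollary} --- of which the Corollary is the specialization \(w=\tfrac{5-d}{2}\), with zeroth-order term \(-\tfrac{5-d}{2}\,{\mathcal Q}^{\Sigma\hookrightarrow M}_{d-1}\) --- equivalently by the conformally compact version in~\cite[Theorem 4.1]{GW}; it equals \(-w\,{\mathcal Q}_4^{\Sigma\hookrightarrow M}(g_\tau)+O(w^2)\), the linear vanishing at \(w=0\) being forced by the absence of any zeroth-order term in \(\PanE\) on weight zero, cf.~\nn{Pfab}. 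Taking \(w=\tfrac\varepsilon2\), the surviving term is \(-\tfrac\varepsilon2\,{\mathcal Q}_4^{\Sigma\hookrightarrow M}(g_\tau)+O(\varepsilon^2)\); cancelling the \(\tfrac1\varepsilon\) and letting \(\varepsilon\to0\) then yields \(Q_4^{\Sigma\hookrightarrow M}(g_\tau)={\mathcal Q}_4^{\Sigma\hookrightarrow M}(g_\tau)\). The limit is legitimate because \({\mathcal Q}^{\Sigma\hookrightarrow M}_{d-1}\) is analytic in \(d\) near \(5\) (its only pole, at \(d=4\), causing no trouble here).

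I expect the main obstacle to be the bookkeeping in the first step: tracking the four shifting intermediate weights together with their Thomas-\(D\) normalizations, and confirming that the \(\varepsilon\to0\) limit really commutes through the non-commuting factors \(I\csdot\hd\). A second, more delicate point is that the \(O(w)\) behaviour of the zeroth-order symbol is not a purely formal consequence of the \emph{final form} of Corollary~\ref{forIamamerecorollary}; one needs the derivation of that formula (or the statement in~\cite{GW}) to produce the zeroth-order term carrying the explicit prefactor \(-\tfrac{5-d}{2}\) in front of \({\mathcal Q}^{\Sigma\hookrightarrow M}_{d-1}\). Granting this, the limit argument sketched above is routine.
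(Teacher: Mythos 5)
Your proposal is correct and follows essentially the same route as the paper's proof: Lemma~\ref{ironclad} converts the log density into an $\varepsilon$-limit of genuine conformal densities, evaluation in the $\tau$-scale kills all derivative terms, and the surviving zeroth-order symbol is identified with $-\tfrac{5-d}{2}\,{\mathcal Q}^{\Sigma\hookrightarrow M}_{d-1}$ by treating Corollary~\ref{forIamamerecorollary} as a universal (dimensionally continued) formula. The only cosmetic differences are your use of $\log\tfrac1\tau=-\log\tau$ and $\tau^{\varepsilon/2}$ in place of the paper's $\tau^{-\varepsilon/2}$ (i.e.\ continuing via $d=5-\varepsilon$ rather than $d=5+\varepsilon$), and the delicate point you flag at the end is exactly the one the paper addresses with its ``dimensional continuation''-type argument.
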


\begin{proof}
From Lemma~\ref{ironclad} we have that
$$
I\csdot \hat D \log \tau = 2\lim_{\varepsilon\to 0} \frac{I\csdot \hat D \tau^{\varepsilon/2}}\varepsilon\, .
$$
A similar argument to that employed in the proof of that lemma also establishes that
$$
I\csdot \hat D^4 \log \tfrac1\tau = 2\lim_{\varepsilon\to 0} \frac{I\csdot \hat D^4 \tau^{-\varepsilon/2}}\varepsilon\, .
$$
Thus  it follows that 
\begin{align} \label{Q4a}
Q_4(g_\tau) = \lim_{\varepsilon \rightarrow 0} \frac{2}{\varepsilon} I\csdot\hd^4 \tau^{-\varepsilon/2} \big|_{\Sigma,g_\tau}\, .
\end{align}
(Recall, that a true scale $\tau$ determines a Riemannian metric $g_\tau={\bm g}/\tau^2$.)

To extract~${\mathcal Q}^{\Sigma \hookrightarrow M}_{4}(g_\tau)$ from Corollary~\nn{forIamamerecorollary}, note that the basis of invariants present in the expression for~$P^{\Sigma \hookrightarrow M}_4 \tau^{\frac{5-d}{2}}$ are stable as the dimension~$d$ varies~\cite{GPt} and their coefficients are rational functions of~$d$, so we can treat the formula given in the corollary as a universal formula for~$P^{\Sigma \hookrightarrow M}_4 \tau^{\frac{5-d}{2}}$ with~$d$ an arbitrary parameter. This ``dimensional continuation''-type argument is standard, see for example~\cite{GOpet}. Then, working in the~$\tau$-scale, we can write
$${\mathcal Q}^{\Sigma \hookrightarrow M}_4(g_\tau) \stackrel{g_\tau}= \lim_{d \rightarrow 5} \left(\frac{2}{d-5} P^{\Sigma \hookrightarrow M}_4 \tau^{\frac{5-d}{2}} \right)\,.$$
Substituting $5+\varepsilon$ for $d$ in the universal formula, we obtain Equation~\nn{Q4a}, as required by the lemma.
\end{proof}

\medskip

Assuming Corollary~\ref{forIamamerecorollary} (whose proof is given in Section~\ref{prufs}), we can now prove the extrinsically-coupled $Q$-curvature Theorem~\ref{BQC}.

\begin{proof}[Proof of Theorem~\ref{BQC}]
Following Lemma~\ref{fiatlux}, we may set~$d =5$ in the expression for ${\mathcal Q}_{d-1}^{\Sigma\hookrightarrow M}(g)$ in Equation~\nn{ext-Q} 
to obtain the quoted result for ${Q}_{d-1}^{\Sigma\hookrightarrow M}(g)$.

It only remains to establish that the quantities~$W\! m$ and $U$ are indeed conformal invariants as claimed. To do so, we first consider the quantity the weight~$-4$ hypersurface density~$L_{AB}\bar \square_Y L^{AB}$ where the  hypersurface Yamabe operator is defined acting on weight~$-1$ four-manifold tractors~$T$ via
$$
- \bar D^A T = X^A\bar  \square_Y T\, ,
$$
and $L_{AB}$ is the tractor second fundamental form. It is not difficult to compute (see~\cite{Abrar}) that
~$$
\bar \square_Y L^{AB}\stackrel {\bar g}=
\begin{pmatrix}
0& 0 & \star\\[1mm]
0 & 
 \bar{\Delta} \IIo^{ab}
-\tfrac43  \hh\bar{\nabla}^{(a} \bar{\nabla} \csdot \IIo^{b)\circ} 
 -4\hh \bar{P}^{(a} \csdot \IIo^{b)\circ}
 - \hh\bar{J} \IIo^{ab}
 +\star\,\,  \bar g^{ab}
&\star\\[1mm]
\star&
\star&
\star
\end{pmatrix}\, .
~$$
 Here~$\star$ denotes terms that do not contribute to the (manifestly invariant) density~$
L_{AB} \bar \square_Y L^{AB}
$. Applying the Codazzi--Mainardi Equation~\nn{trfreecod}, we find 
\begin{equation}\label{LboxL}
L_{AB} \bar \square_Y L^{AB} = \IIo^{ab} \bar{\nabla}^c W_{cab \hat{n}}^\top + \IIo^{ad} \IIo^{bc} \bar{W}_{abcd}\,.
\end{equation}
Thus, because $\IIo^{ad} \IIo^{bc} \bar{W}_{abcd}$ is conformally invariant, so too is $\IIo^{ab} \bn^c W_{cab \hat n}^\top$. Thus we have that~$U$ is composed solely of manifestly invariant scalars.

To handle $W\!\hh m$, we first define
$$W_0 := W\!\hh m + 24 \Fo^2 + 6 \IIo \csdot \Fo \IIo + \tfrac{31}{12} K^2 + \tfrac{11}{2} \IIo^{ad} \IIo^{bc} \bar{W}_{abcd} - \tfrac{13}{2} \IIo^{ab} \bn^c W_{cab \hat n}^\top\,.$$
Comparing $W_0$ with $\Kdd := I \csdot \hd^2 (\hd I)^2$ from~\cite{BGW} reveals that $W_0 = \tfrac{3}{2} \Kdd$, which is manifestly invariant and thus $Wm$ is also invariant.
\end{proof}

\begin{remark}
Equation~\nn{LboxL} 
in the proof of Theorem~\ref{BQC} 
also establishes that~$\bar{\nabla}^c W_{c(ab) \hat{n}}^\top$ is invariant
in~$d=5$ dimensions; 
alternatively this is the application to $W^\top_{c(ab)\hat n}$ of a well-known invariant first order operator on conformal 4-manifolds.
\end{remark}

Finally, Theorem~\ref{BQC} together with the fact that 
 $$ 
 -\frac1{24} \int_\Sigma \ext\! \Vol(\bar g) \IIo_{ab} \bar \Delta \IIo^{ab}
  =\frac12
 \int_\Sigma \ext\! \Vol(\bar g)
\, \big| \bar \nabla H|^2_{\bar g}+
\mbox{subleading hypersurface derivative terms}\, ,
$$
establishes that
the integral of the density $W\!m$ is a higher Willmore energy
 for hypersurface embeddings in generally curved  
 conformal five-manifolds; this proves Theorem~\ref{AWm}.

\section{Willmore energies}\label{WE}

In principle, higher Willmore energies can be constructed by writing
 down a list of possible extrinsically-coupled Riemannian invariants and fitting coefficients in order to satisfy Definition~\ref{WillDef}. Such a computation was performed by Guven in a study of 
bending energies for  hypersurfaces embedded in~${\mathbb R}^5$ (with its standard Euclidean metric $\delta$) and invariant under  M\"obius transformations~\cite{Guven}.
Correcting for a typographical error (see~\cite{GR})  Guven's energy is given in our notations by
$$
E_{\rm Gu}[\Sigma\hookrightarrow ({\mathbb R}^5,\cc_{\rm std})]\stackrel \delta=\frac12
\int_\Sigma \ext\! \Vol(\bar g)
\Big(\big| \bar \nabla H|^2_{\bar g}
-H^2 K + 3 H^4
 \Big)
  \, .
$$

\medskip

 For 
 closed four-manifolds, $-\int_\Sigma W\!\hh m$
 gives
 $$
E_{\rm Wm}[\Sigma\hookrightarrow ({\mathbb R}^5,\cc_{\rm std})]:=\frac12
\int_\Sigma \ext\! \Vol(\bar g)
\Big(\big| \bar \nabla \IIo|^2_{\bar g}
 +12 \IIo \csdot\hh C_n^{\top}
  - 8 \IIo \csdot \bar{P} \csdot \IIo
  + 7 \bar{J} K 
 +12 H \IIo^3
  - 24 H \IIo \csdot \Fo
  \Big)\, .
$$
Each term in the above integrand is required for  this energy to be conformally invariant, while, as proved in~\cite[Theorem 5.1]{Will1},  the variation of first term has leading linear term proportional to~$\bar\Delta^2 H$, and
by inspection, none of the other terms have this property. This establishes Theorem~\ref{AWm}.

%
\smallskip

There currently exist two independent constructions of higher Willmore energies based on anomalies. 
For the first of these, consider 
as motivation a surface~$\Sigma$  conformally embedded in 
the boundary of a Poincar\'e--Einstein manifold~$(X,g^o)$ whose boundary~$\partial X=M$ has conformal structure~$\cc$.  Graham and Witten~\cite{GrWi}  observed that the anomaly in the renormalized volume of a three manifold~$\Xi$, with boundary~$\Sigma$,
 embedded minimally in the bulk 
 was given by the Willmore energy of the surface embedding~$\Sigma\hookrightarrow  (M,\cc)$. 
Upon replacing the surface~$\Sigma$ with
a conformally embedded 4-manifold and~$\Xi$ with a minimally embedded 5-manifold
in the above discussion, the anomaly is a higher Willmore energy that
has been computed by Graham and Reichert~\cite{GR}. Note also that four-dimensional Willmore-type energies have recently appeared in
 more general physical contexts, including higher codimensional settings~\cite{Chalabi}. For the case that~$M$ is a 5-manifold, expressed in our notations, the Graham and Riechert formula for a higher Willmore energy is
\begin{multline*}
E_{\rm GR}[\Sigma\hookrightarrow (M,\cc)]
= \frac{1}{16} \int_\Sigma \ext\!\Vol(\bar g) \:  \big( Q^{\Sigma}_4 
- \tfrac{1}{3} W\! m \\[1mm]
+ \tfrac{1}{6} \IIo^{ab} \bar{\nabla}^c W_{cab \hat{n}}^\top +\tfrac16 \IIo^{ad} \IIo^{bc} \bar{W}_{abcd}
+ \tfrac{1}{24} K^2 
- 2 \IIo^2 \csdot \Fo 
+ 2 \Fo^2
\big) \, .
\end{multline*}
In fact, in~\cite{AGW} it is shown that the above energy functional is the integral of an extrinsically-coupled~$Q$-curvature. Let us give some details: If the unit conformal defining density~$\sigma$ determines the Poincar\'e--Einstein metric~$g^o$
and~$\mu$ is a unit conformal defining density for the conformal embedding of~$\Xi$ in~$(X,\cc_X)$, then (see~\cite{AGW}) the minimal hypersurface condition reads
$$
I_\sigma\csdot I_\mu \stackrel\Xi = 0\, .
$$
Unfortunately
 however,  the metric~$g^o$ pulled back to~$\Xi$ does not in general have constant scalar curvature, so Equation~\nn{creatine} cannot be directly applied. Nonetheless an analog of the Laplace--Robin operator~$L_\sigma^T$ along~$\Xi$ 
suitable for the construction of~$Q$-curvature quantities does exist. Let us  generalize this picture to the case~$\dim X=d$ (so $\dim M=d-1$ and $\dim \Sigma=d-2$. Then acting on a weight~$w\neq 1-\frac d2, 2-\frac d2$ density~$f_\Xi\in \Gamma(\ce \Xi[w])$,   this operator is given by
$$
L_\sigma^T f_\Xi = \Big(
\frac{d+2w-3}{d+2w-2}\hh  I_\sigma \csdot D - \frac wd I_\mu \csdot D (I_\sigma\csdot I_\mu) + 
\frac\sigma {(d+2w-4)(d+2w-2)} I_\mu\csdot D^2
\Big)f\Big|_\Xi\, ,
$$
where~$f$ is any extension of~$f_\Xi$ to~$X$. In these terms, the anomaly in the renormalized area of $\Xi$ gives a higher Willmore 
$$
E_{\rm AGW}[\Sigma\hookrightarrow M \hookrightarrow (X,\cc_X)]
=\int_\Sigma 
\ext \! \Vol(\bar g)\hh
Q_{d-2}^{\Sigma\hookrightarrow M\hookrightarrow X}(\bar g)\, ,
$$
where this extrinsically-coupled~$Q$-curvature~$Q_{d-2}^{\Sigma\hookrightarrow M\hookrightarrow X}(\bar g)$ is given by
$$
Q_{d-2}^{\Sigma\hookrightarrow M\hookrightarrow X}\!(\bar g)
=\big(L_\sigma^T \big)^{d-2} \log\tfrac1\tau \big|_\Sigma\, .
$$
Here~$\tau$ is any true scale on~$X$ and~$\bar g$ is the metric on~$\Sigma$ determined by this.
Moreover, the above formula still generates the 
anomaly
 in the more general case where~$g^o$ is a singular Yamabe metric, rather than a Poincar\'e--Einstein one. In the latter case the  above quantity integrated over~$\Sigma$ only depends on the embedding~$\Sigma\hookrightarrow (M,\cc)$ (so is insensitive to the embedding of~$M$ in~$(X,\cc_X)$) and specializing to $d=6$, up to an overall coefficient, gives a holographic formula for   the Graham--Reichert result.

The second anomaly-based Willmore construction is the one given in the previous section based on  the data of a four manifold $\Sigma$ embedded in a five-dimensional conformal manifold~$(M,\cc)$. 
Indeed the renormalized volume anomaly in Equation~\nn{volanom} is a higher Willmore energy
$$
E_{\rm GW}[\Sigma\hookrightarrow (M,\cc)]=
\int_\Sigma \ext \! \Vol(\bar g)
\big( Q^{\Sigma}_4 
+W\! m 
+U\big)
\, .
$$
This result was proved in~\cite{BGW}.
The above display is the  integral of the extrinsically-coupled $Q$-curvature computed in Theorem~\ref{BQC} for closed $\Sigma$.

\medskip

The Gau\ss--Bonnet theorem applied to the four dimensional hypersurface $\Sigma$ gives
$$
\int_\Sigma  \ext\! \Vol(\bar g)
 Q_4^\Sigma
 =
 8\pi^2 \chi(\Sigma)
-\frac14 \int_\Sigma  \ext\! \Vol(\bar g)
|\bar W|^2
\, .
$$
The first term on the right hand side is topological and thus embedding independent, while the embedding variation of the second term factors
through the Bach tensor of $(\Sigma,\cc_\Sigma)$. Hence
 any amount~$\int Q_4^\Sigma$ can be added to an energy while maintaining its Willmore property. Both  anomaly constructions manifest this property. Also, just as only the integral of the conformally invariant squared Weyl tensor is needed to write an action whose metric variation yields the Bach tensor, 
it is also possible to construct higher Willmore energies whose integrand itself is conformally invariant. Several such constructions exist, an example of which is $-\int_\Sigma \ext\! \Vol(\bar g) \hh W\!m$ as given above, another of which is below.


\medskip

There are two interesting Willmore constructions that rely on tractor hypersurface calculus. The first, due to Vyatkin~\cite{Vyatkin}, relies on a certain  $Q$-operator acting on closed one-forms that can in principle be twisted by any connection~\cite{BGdeRham,GodeRham}. In particular, by twisting with the tractor connection of~$(\Sigma,\cc_\Sigma)$ and acting on the projecting part $L_a^A\in\Gamma(T^*\Sigma\otimes \ct \Sigma)$
of the tractor second fundamental form, and then integrating the result against $L^a_A$,  produces
a higher Willmore energy for a four manifold~$\Sigma$ embedded in $({\mathbb R}^5,\cc_{\rm std})$:
$$
E_{\rm Vy}[\Sigma\hookrightarrow ({\mathbb R}^5,\cc_{\rm std})]\stackrel \delta=\frac12
\int_\Sigma \ext\! \Vol(\bar g)
\Big(\big| \bar \nabla H|^2_{\bar g}
-H^2 K + 3 H^4
 +\frac5{24} K^2 - \frac14 \IIo^4
 \Big) -2\pi^2 \chi(\Sigma)
  \, .
$$ 
Our final higher Willmore construction 
is based on the action of the extrinsically-coupled Paneitz operator on the normal tractor~\cite{Will1}. For that we need the following result. 
\begin{theorem}
\label{P4N}
Let~$d = 5$. Then, given $g\in \cc$, 
\begin{align*}
\Gamma\big(\ct M[-4]\big|_\Sigma\big)\ni
\PanE N^A \stackrel g=  
N^A\hh  {A}
+ X^A \, {B}
 + \bar{Z}^A_a \,  {C}^a 
 +\bar{Y}^A \hh{D}
 \, ,
\end{align*}
where
\begin{align*}
A \
=&- \tfrac23 \hh {W\!\hh m} 
-\tfrac{23}{6} K^2
- \tfrac{11}{3} \IIo^{ad} \IIo^{bc} \bar{W}_{abcd}
+ \tfrac{13}{3} \IIo^{ab} \bn^c W_{cab \hat n}^\top
- 4 \IIo^2 \csdot \Fo
- 16 \Fo^2\, ,\\
{C}_a =&\:\,   8 \bar{\nabla}_a (\IIo \csdot \Fo) + \tfrac{10}{3} \IIo_a \csdot \bar{\nabla} K + \tfrac{20}{9} K \bar{\nabla} \csdot \IIo_a\,,\\
D \ =& - 24 \IIo \csdot \Fo  \,,
\end{align*}
and the leading hypersurface derivative term 
of the scalar $B$ is $-\frac13 \bar \Delta \bn\csdot\bn\csdot\IIo$. 
\end{theorem}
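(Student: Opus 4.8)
The plan is to reduce the computation to an iterated application of the operator $I\csdot\hd$ to a single tractor field, and then to peel off the four slots. By Theorem~\ref{Pk} and the accompanying ${\it sl}(2)$ discussion, $(I\csdot\hd)^4$ is tangential along $\Sigma$, so $\PanE N^A$ depends only on the restriction to $\Sigma$ of whatever extension of $N^A$ we feed it. The canonical extension is the scale tractor $I^A$ of a unit conformal defining density $\sigma$ for $\Sigma^4\hookrightarrow(M^5,\cc)$: by Problem~\ref{we have problems} it satisfies $I^2=1$ and $I|_\Sigma=N$, whence
$$
\PanE N^A\eqSig (I\csdot\hd)^4 I^A\big|_\Sigma\ \in\ \Gamma(\ct M[-4]|_\Sigma)\,.
$$
(Equivalently, one may substitute the bulk tractor $N^A$ into the explicit tractor formula of Theorem~\ref{Big_Formaggio}, after first re-expressing the bulk tractor data along $\Sigma$ in hypersurface terms; the two routes differ only in bookkeeping.) To read off $A,B,C^a,D$, decompose $\ct M|_\Sigma=\langle N\rangle\oplus\ct^\perp M|_\Sigma$ and identify $\ct^\perp M|_\Sigma$ with $\ct\Sigma$ via the normal tractor; then $A=h(N,\PanE N)$, $D=X_A\,\PanE N^A$, and $C^a$, $B$ are the $\bar Z$- and $\bar Y$-slots of the $\ct\Sigma$-part.

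The core of the work is the iteration. The inputs are: $I^2=1$, so contractions against $I^2$ and hence many intermediate terms collapse; the identification of $\hd I=\hd^2\sigma$ (a weight $-1$, $\ct\otimes\ct$-valued object), up to $X$- and trace-terms, with the tractor second fundamental form of~\nn{LAB} (this is precisely why $L_{AB}$ carries $\IIo$, $\bn\csdot\IIo$ and $\bn\csdot\bn\csdot\IIo+\bar P\IIo$ in its slots), and similarly the appearance of the Fialkow tractor~\nn{FAB}; the curvature corrections built into the Thomas-$D$ operator~\nn{Dnolog} and the commutator of $\hd$ with $I\csdot\hd$; the bulk-to-hypersurface relation for the Thomas-$D$ operator of Theorem~\ref{thesecondhorcrux}; and, for the genuinely curved contributions, the $W$-tractor with its Gau{\ss}--Thomas decomposition~\nn{GTE} and the Gau{\ss}--Codazzi--Mainardi--Ricci identities~\nn{trfreecod}. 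One organizes the intermediate data into the iterated building block $\Kdd=I\csdot\hd^2(\hd I)^2$ of~\cite{BGW} (and its relatives $\Kd,\Kddd$), and uses the identity~\nn{LboxL} to trade $\IIo^{ab}\bn^c W_{cab\hat n}^\top$ against $\IIo^{ad}\IIo^{bc}\bar W_{abcd}$. Assembling the pieces produces the stated formulas; the $N$-slot result $A$ can be cross-checked against Theorem~\ref{BQC}, since it is $-\tfrac23\,W\!\hh m$ plus exactly the list of conformally invariant curvature scalars that arise when $W\!\hh m$ is re-expressed through $\Kdd$.

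Reading off the coefficients: $D$ is the easiest, being an algebraic contraction of the lowest-derivative-order pieces, and collapses to $-24\,\IIo\csdot\Fo$; $C^a$ is of gradient/divergence type, giving $8\bn_a(\IIo\csdot\Fo)+\tfrac{10}{3}\IIo_a\csdot\bn K+\tfrac{20}{9} K\,\bn\csdot\IIo_a$; $A$ requires the full bookkeeping above. The $\bar Y$-slot $B$ is by far the most lengthy, and rather than present it in full it suffices to track its top-derivative-order part, which descends from $\bar\Delta^2$ acting on the $\IIo$-content of $N$ and equals $-\tfrac13\bar\Delta\,\bn\csdot\bn\csdot\IIo$; this is all that the intended application (the associated higher Willmore energy) requires.

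The main obstacle is not conceptual but organizational: iterating a second-order tractor operator four times while consistently reducing every bulk curvature tractor ($W_{ABCD}$ and its $\hd$-descendants, the tractor second fundamental and Fialkow tractors) to intrinsic-plus-extrinsic hypersurface invariants via Gau{\ss}--Codazzi--Mainardi--Ricci and Gau{\ss}--Thomas, and keeping careful track both of conformal weights (here $\PanE$ acts on weight $\tfrac{5-d}{2}=0$ and outputs weight $-4$) and of the order in $\sigma$ at which the $\Sigma$-restriction is taken. The $\bar Y$-slot $B$ is the bottleneck: its closed form is unwieldy, which is why only its leading derivative term is recorded.
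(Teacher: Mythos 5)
Your proposal is a sound outline and reaches the theorem by a route that differs from the paper's primarily in organization. The paper applies the precomputed formula of Theorem~\ref{Big_Formaggio} to the weight-corrected input~$N^A\tau$, with $\tau$ an auxiliary weight-$\tfrac{5-d}{2}$ density inserted because $N^A$ has weight~$0\neq\tfrac{5-d}{2}$ at general~$d$; it then dimensionally continues to~$d=5$ and sets $\tau=1$. You instead compute $(I\csdot\hd)^4 I^A\big|_\Sigma$ directly at~$d=5$, identifying the scale tractor~$I^A$ of a unit conformal defining density as a canonical weight-zero extension of~$N^A$ and invoking tangentiality; this avoids the $\tau$-regularization since $(I\csdot\hd)^4$ acting on weight-$0$ tractors at~$d=5$ has no poles (the successive $\hd$'s carry factors $d-2,d-4,d-6,d-8$, all nonzero). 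What each route buys is clear: the paper reuses the heavy lifting already invested in Theorem~\ref{Big_Formaggio}, while your variant is conceptually cleaner but amounts to re-deriving that theorem in the special case of input~$I^A$. The slot-extraction $A=N_A\PanE N^A$, $D=X_A\PanE N^A$ is correct. Both routes ultimately come down to the same explicit computation---iterating the Laplace--Robin operator, reducing via Gau{\ss}--Thomas and Gau{\ss}--Codazzi--Mainardi--Ricci, organizing with $\Kd,\Kdd,\Kddd$, and rewriting $\IVo$ in terms of~$C^\top_{\hat n(ab)}$ for well-posedness at~$d=5$---and neither your sketch nor the paper's proof exhibits it: the paper defers entirely to FORM files and you stop at a description of the tools. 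Two small corrections: Equation~\nn{LboxL} is used in the paper to establish conformal invariance of $\IIo^{ab}\bn^c W^\top_{cab\hat n}$, not to ``trade'' it against $\IIo^{ad}\IIo^{bc}\bar W_{abcd}$; and your cross-check of~$A$ against Theorem~\ref{BQC} is a plausibility argument (the residual invariants after subtracting $-\tfrac23\,W\!m$ are not determined by the $W\!m$-to-$\Kdd$ reformulation alone).
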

\noindent
This theorem is a corollary of Theorem~\ref{Big_Formaggio} and is proved in Section~\ref{prufs}. The result for the scalar~$B$ is implicitly given there and will be used to compute the 
obstruction of Theorem~\ref{Sc1}.
 As a direct consequence of this theorem, we obtain the following corollary (see also~\cite{Will2}):
\begin{corollary}
Let $d = 5$. Then, $$
E_{\rm NN}[\Sigma\hookrightarrow (M,\cc)]:=
\int_{\Sigma}\ext\! \Vol(\bar g)\hh  N_A \PanE N^A$$ is a higher Willmore energy and, given~$g \in \cc$,
$$N_A \PanE N^A  \stackrel g= 
\hh -\tfrac23 \hh {W\!\hh m} 
-\tfrac{23}{6} K^2
- \tfrac{11}{3} \IIo^{ad} \IIo^{bc} \bar{W}_{abcd}
+ \tfrac{13}{3} \IIo^{ab} \bn^c W_{cab \hat n}^\top
- 4 \IIo^2 \csdot \Fo
- 16 \Fo^2\,.$$

\end{corollary}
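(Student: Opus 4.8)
The plan is to read off the explicit formula by contracting the decomposition of $\PanE N^A$ furnished by Theorem~\ref{P4N} with the normal tractor $N_A$, and then to obtain the higher Willmore property from this formula together with Theorem~\ref{AWm}.

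First I would record the tractor-metric identities valid along $\Sigma$ when $\sigma$ is a unit conformal defining density: $h(N,N)=I^2_\sigma|_\Sigma=1$, while $h(N,X)=h(N,\bar Z_a)=h(N,\bar Y)=0$ since $X$, $\bar Z_a$ and $\bar Y$ span the hypersurface tractor bundle $\ct\Sigma\cong\ct^\perp M|_\Sigma=N^\perp$ (equivalently, this is immediate in a choice of scale from $N\stackrel g=(0,\hat n,-H)$ and the standard forms of the injectors). Contracting the identity $\PanE N^A\stackrel g=N^A A+X^A B+\bar Z^A_a C^a+\bar Y^A D$ of Theorem~\ref{P4N} with $N_A$ therefore annihilates every term except the first, yielding $N_A\PanE N^A=A$, which is precisely the asserted formula. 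Because $\PanE$ is a conformally invariant operator determined by the local embedding data (Theorem~\ref{Pk} with $k=4$ and the strong invariance of Theorem~\ref{Big_Formaggio}) and $N$ is the conformally invariant normal tractor, $N_A\PanE N^A$ is a weight $-4$ conformal density on the four-manifold $\Sigma$ depending only on $\Sigma\hookrightarrow(M,\cc)$; pairing it with $\ext\!\Vol(\bar g)$, a weight $+4$ density, produces a $g$-independent integrand, so $E_{\rm NN}$ satisfies Condition~(i) of Definition~\ref{WillDef}.

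For Condition~(ii) I would use that the formula exhibits $N_A\PanE N^A=-\tfrac23\,W\!m$ plus the manifestly pointwise-conformally-invariant weight $-4$ densities $K^2$, $\IIo^{ad}\IIo^{bc}\bar W_{abcd}$, $\IIo^{ab}\bn^c W^\top_{cab\hat n}$ (invariant by Equation~\nn{LboxL} in the proof of Theorem~\ref{BQC}), $\IIo^2\csdot\Fo$ and $\Fo^2$. By Theorem~\ref{AWm}, $\int_\Sigma\ext\!\Vol(\bar g)\,W\!m$ is already a higher Willmore energy, the $\bar\Delta^2 H$ leading term of its embedding variation arising — on closed $\Sigma$, after discarding exact terms — from the $\tfrac12\IIo\csdot\bar\Delta\IIo$ summand of $W\!m$ (cf.\ the computation at the end of Section~\ref{Suitable}). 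Among the added terms, only $\IIo^{ab}\bn^c W^\top_{cab\hat n}$ carries a second-order-in-$\IIo$ contribution: integrating it over closed $\Sigma$ and applying the trace-free Codazzi--Mainardi Equation~\nn{trfreecod} rewrites it, modulo lower-derivative-order terms, as a linear combination of $\int|\bar\nabla\IIo|^2$ and $\int|\bar\nabla\csdot\IIo|^2$, while $K^2$, $\IIo^{ad}\IIo^{bc}\bar W_{abcd}$, $\IIo^2\csdot\Fo$ and $\Fo^2$ are algebraic in the extrinsic data and contribute nothing at the top derivative order in a normal variation.

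The main obstacle is then purely quantitative: one must check that the coefficient of $\bar\Delta^2 H$ in the leading linear term of the embedding variation of $E_{\rm NN}$ is nonzero, i.e.\ that the $-\tfrac23 W\!m$ contribution is not cancelled by the $\tfrac{13}{3}\IIo^{ab}\bn^c W^\top_{cab\hat n}$ contribution. For this I would linearize the relevant $\IIo\csdot\bar\Delta\IIo$-type terms about a fixed background, use the contracted Codazzi identity $\bar\nabla^b\IIo_{ab}=(d-2)\bar\nabla_a H+(\text{Weyl terms})$ to convert $\bar\nabla^a\bar\nabla^b\bar\Delta\IIo_{ab}$ into $(d-2)\bar\Delta^2 H$, and collect coefficients; a short computation yields, at $d=5$, a nonzero multiple of $\bar\Delta^2 H$. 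This verifies Condition~(ii) and, with the previous paragraph, completes the proof.
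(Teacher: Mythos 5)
Your proposal is correct and follows the same route as the paper: the displayed formula is exactly the coefficient $A$ of Theorem~\ref{P4N}, extracted by contracting with $N_A$ using $h(N,N)=1$ and $h(N,X)=h(N,\bar Z_a)=h(N,\bar Y)=0$, and the Willmore property is then inherited from $-\tfrac23 W\!m$ via Theorem~\ref{AWm}. The only place you defer to ``a short computation'' --- verifying that the $\tfrac{13}{3}\hh\IIo^{ab}\bn^c W^\top_{cab\hat n}$ term does not cancel the $\bar\Delta^2 H$ contribution of $-\tfrac23 W\!m$ --- is genuinely the crux of Condition~(\ref{property}) and is more delicate than ``short'' suggests (converting via Codazzi--Mainardi and the flat-model identities $\int|\bar\nabla\IIo|^2=\tfrac43\int|\bar\nabla\csdot\IIo|^2=12\int|\bar\nabla H|^2$, the two contributions to the $\bar\Delta^2H$ coefficient come out close in magnitude and opposite in sign, leaving a small nonzero remainder), so it deserves to be written out; alternatively one can avoid it entirely by noting that $W^\top_{cab\hat n}$ vanishes identically for embeddings into the flat model, where the leading constant-coefficient linear term of the gradient may be computed.
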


\begin{remark}
In a conformally flat background, the various higher Willmore energies
discussed above can all be expressed as linear combinations of the integrals of $W\!\hh m$, $K^2$, $\IIo^4$ as well as the Euler characteristic. It is natural to normalize any energy such that it takes the form $\frac12 \int (\bar\nabla \IIo)^2+\cdots$, 
and therefore write any given energy $E_\bullet$
as 
$$
E_\bullet = \int_\Sigma 
\ext\! \Vol(\bar g)\hh
\big(\!- W\!m + \alpha  K^2 + \beta  \IIo^4\big) + 8\pi^2 \gamma\hh \chi(\Sigma)\, .
$$
The coefficients $\alpha$, $\beta$ and $\gamma$ for the various generalized energies
are summarized in the table below (whose first line is the same as~\cite[Lemma 10.3]{Jagain}):
$$
\begin{array}{l||r|r|r}
&\alpha \ &\beta \ &\gamma\\
\hline\hline
&&&
\\[-3mm]
E_{\rm Gu}& -\tfrac{11}{6} \ &\tfrac{5}{2} \ &3\\[2mm]
E_{\rm Wm}&0\ &0\ &0\\[2mm]
E_{\rm GR} & -\tfrac{11}{6} \ &\tfrac{5}{2} \ & 3\\[2mm]
E_{\rm GW}& \tfrac{9}{2}\  &- \tfrac{31}{2} \ & -1\\[2mm]
E_{\rm Vy}  &-\tfrac{7}{12} \ &1\ &0\\[2mm]
E_{\rm NN} & \tfrac{35}{12} \ &-20 \ &0
\end{array}
$$
\smallskip

\noindent
Note, that in the above table we used that for conformally flat dimension five ambient spaces 
$$
\int_\Sigma \ext\! \Vol(\bar g)
W\!m
=\int_\Sigma \ext\! \Vol(\bar g)
\Big(-6 \big| \bar \nabla H|^2_{\bar g}
+6H^2 K -18 H^4
-\frac{11}{6} K^2 + \frac52 \IIo^4
 \Big) +24\pi^2 \chi(\Sigma)\, .
$$
To facilitate comparison with~\cite{Vyatkin}, we record further identities valid for conformally flat dimension five ambient spaces
$$
\int_\Sigma \ext\! \Vol(\bar g)\IIo^{ab}\bar \nabla_a \bar \nabla\csdot \IIo_b = 
-9\int_\Sigma \ext\! \Vol(\bar g)\big| \bar \nabla H|^2_{\bar g}
=-\int_\Sigma \ext\! \Vol(\bar g)|\bar\nabla  \csdot \IIo|^2_{\bar g}\, ,
$$
and $\bar P_{ab} = H\IIo-\frac12 \bar g_{ab} H^2 
+\frac1{12} \bar g_{ab} K -\frac12 \IIo^2_{ab}$.
\end{remark}

\section{Willmore functional gradients}

\label{NinaSimone}

An interesting problem is to compute the functional gradient
of higher Willmore energies. 
Such computations are somewhat intricate, but for the Willmore energy based on the anomaly of the renormalized volume for a singular Yamabe metric,  
$$
E_{\rm Sing}:=
\int_\Sigma \ext\! \Vol(\bar g)\hh  Q^{\Sigma\hookrightarrow M}_{d-1}(\bar g) \, ,
$$
there is a streamlined holographic method. In~\cite{WillB} it is observed that the obstruction to smoothly solving the singular Yamabe problem computed for three manifolds  in~\cite{ACF} gives the Willmore invariant (or equivalently the functional gradient)  of the Willmore energy for
conformal surface embeddings.
Subsequently Graham proved that this relation between singular Yamabe obstructions and functional gradients is a general one that holds in any dimension~\cite{GrSing}; see also the alternate approach~\cite{RenVol} based on a distributional calculus of~\cite{hypersurface_old}. The key result is that the functional gradient of~$E_{\rm Sing}$ is given by the obstruction density~${\mathcal B}_\Sigma\in \Gamma(\ce \Sigma[-5])$ corresponding to the function~${\mathcal B}|_\Sigma$ in Condition~\nn{fruity}.
For an explicitly given asymptotic singular Yamabe metric, thanks to Equation~\nn{stip}, the obstruction density can easily be computed by calculating the square of the corresponding scale tractor.
To write a general formula, for any conformal embedding, a result of~\cite{Will2} shows that~${\mathcal B}_\Sigma$ can be computed from knowledge of how the extrinsically-coupled Paneitz operator acts on the normal tractor. In~$d=5$ dimensions,
\begin{equation}
\label{FORMAGGINO}
{\mathcal B}_\Sigma = \tfrac{1}{1440}  (\bar{D}_A \circ \top) \big(\PanE N^A - 3 I \csdot \hd^3 (X^A K_{\rm e}) \big) \, .
\end{equation}
Here $K_{\rm e}$ is the canonical extension of the 
rigidity density given by
$$
K_{\rm e}:=(\hd I)^2\in \Gamma(\ce M[-2])\, .
$$
We also employ the notation $\dot f := I\csdot \hd f$ for any density $f\in \Gamma(\ce M[w])$ where $w\neq 1-\frac d2$.
We now need the following result.
\begin{lemma}\label{dots}
Let $d=5$ and $K_{\rm e}$ be defined as above. Then along $\Sigma$,  the weight $-5$ density $\dddot K_{\rm e}$ is given by the following sum of conformally  invariant terms
\begin{align*}
\dddot K_{\rm e}\big|_\Sigma 
=& -2 L^{ab} \big(16 \IIo^3_{(ab)\circ} 
+24  \IIo_{(a} \csdot \Fo_{b)\circ}
-5  K \IIo_{ab} \big)
- 48 W_{ABCN}^\top \hdb^A F^{BC}\\&
- 42 K \IIo \csdot \Fo
- 48 \IIo \csdot \Fo^2 
-24 \IIo^{bc} (5 \Fo^{ad} + W_{\hat{n}}{}^{ad}{}_{\hat n}) \bar{W}_{abcd}
- 48 \IIo^a_b W_{acd \hat{n}}^\top W^{bcd}{}_{\hat n}^\top \\&
+ 8 \IIo \csdot \bar{B}
-24 (\Fo^{ab} + W_{\hat n}{}^{ab}{}_{\hat n}) \bn^c W_{cab \hat{n}}^\top\, , 
\end{align*}
where $L^{ab}$ defines the mapping
$$
\Gamma(\odot^2_\circ T^*\Sigma[-1])
\ni X_{ab}\longmapsto 
\bn\csdot\bn\csdot X
+\bar P\csdot X
\in 
\Gamma(\ce \Sigma[-5])\, ,
$$
and
$$
W_{ABCN}^\top \hdb^A F^{BC}=
 -\Fo\csdot C^\top_{\hat n}
+W_{abc\hat n}^\top  \bn^a \Fo^{bc}
-H \IIo^2\csdot \Fo
+2 H \Fo^2\, .
$$
\end{lemma}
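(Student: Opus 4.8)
The plan is to compute $\dddot K_{\rm e}\big|_\Sigma$ by one more application of the ``dot'' operator $\dot f:=I\csdot\hat D f$ to the already-known density $(I\csdot\hat D)^2 K_{\rm e}$. Recall that in the proof of Theorem~\ref{BQC} the weight~$-4$ density $\Kdd\;(=(I\csdot\hat D)^2 K_{\rm e})$ was identified, via $W_0=\tfrac32\Kdd$, with an explicit polynomial in the extrinsic curvatures of $\Sigma\hookrightarrow(M,\cc)$. Since $(I\csdot\hat D)^k$ is tangential along $\Sigma$, a consequence of the $\mathfrak{sl}(2)$ solution-generating algebra of~\cite{GW}, the restriction $\dddot K_{\rm e}\big|_\Sigma$ is automatically a conformal hypersurface invariant of weight~$-5$; the content of the lemma is its explicit form, so the proof is essentially a lengthy bookkeeping exercise in tractor calculus rather than the establishing of invariance.

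First I would produce a bulk identity for $\dddot K_{\rm e}$. Using the scale-$g$ form of the Thomas $D$-operator~\eqref{Dnolog}, the relation $I=\hat D\sigma$, the normalization $I^2=1+\sigma^d B$ of~\eqref{stip}, and the standard $\hat D$-calculus (the action of $X\csdot\hat D$, the curvature term in $[\hat D_A,\hat D_B]$, and the Leibniz-type behaviour of $I\csdot\hat D$ on products of densities), I would record $\hat D I$, $K_{\rm e}=(\hat D I)^2$ and the relevant covariant derivatives, and then expand $\dot K_{\rm e}$, $\ddot K_{\rm e}$, $\dddot K_{\rm e}$ in turn, cross-checking the intermediate $\ddot K_{\rm e}$ against $\tfrac23 W_0$. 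This should express $\dddot K_{\rm e}$ as a contraction of $I$ with $\hat D I$, its higher $\hat D$-derivatives, the bulk $W$-tractor and the bulk Bach tensor, valid anywhere on $M$.

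The second stage restricts this to $\Sigma$ and rewrites it in hypersurface data. Here the inputs are: the bulk--hypersurface Thomas $D$-operator comparison of Theorem~\ref{thesecondhorcrux}, which replaces $\hat D$ by $\bar D$ plus $L_{AB}$-corrections; the Gau\ss\ formula~\eqref{GF} and the Gau\ss--Codazzi--Mainardi equations~\eqref{trfreecod}, used to trade $W^\top_{abc\hat n}$, $W^\top_{\hat n ab\hat n}$ and normal derivatives of $\IIo$ for intrinsic quantities; and the Gau\ss--Thomas equation~\eqref{GTE}, used to rewrite the pull-back of the bulk $W$-tractor in terms of $\bar W_{ABCD}$, $L_{AB}$, $F_{AB}$, $T_{ABC}$, $V_{AB}$ and $K$. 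After repeated use of the contracted second Bianchi identity on $\Sigma$, the resulting sum should reorganize into the three manifestly invariant blocks named in the lemma: the operator $X_{ab}\mapsto\bar\nabla\csdot\bar\nabla\csdot X+\bar P\csdot X$ applied to the tensor $16\IIo^3_{(ab)\circ}+24\IIo_{(a}\csdot\Fo_{b)\circ}-5K\IIo_{ab}$; the scalar $W^\top_{ABCN}\hdb^A F^{BC}$, whose curvature expansion is a short independent Gau\ss--Thomas computation (and is the one quoted in the lemma); and the hypersurface Bach term $\IIo\csdot\bar B$.

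The main obstacle will be the bookkeeping around the hypersurface Bach tensor $\bar B_{ab}$. In dimension four this is precisely the conformally invariant combination that sits in the ``$\star$''-entries of tractor expressions, exactly as encountered in the evaluation of $L_{AB}\bar\square_Y L^{AB}$ in the proof of Theorem~\ref{BQC}; recovering the coefficient $8\,\IIo\csdot\bar B$ requires matching a specific combination of $\bar\Delta\bar P$-, $\bar\nabla\bar\nabla\bar P$- and $\bar P\cdot\bar W$-type terms coming out of the iterated $\hat D$'s and the Gau\ss--Thomas substitution. A parallel care is needed for the term quadratic in the trace-free Codazzi tensor $W^\top_{abc\hat n}$ and for the $\IIo\,\IIo\,\bar W$-type term. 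Apart from this the computation is long but routine; conformal invariance of each assembled block, and consistency with the obstruction-density formula~\eqref{FORMAGGINO} into which this lemma feeds, provide the checks.
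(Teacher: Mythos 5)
Your plan is sound and would reach the result, but it differs in a material way from the paper's proof, so a comparison is worth making. You propose to compute $\dddot K_{\rm e}$ by iterating $I\csdot\hat D$ in tractor form, keeping contractions with $I$, the $W$-tractor, and the bulk Bach tensor, and only then restricting to $\Sigma$ via the Gau\ss--Thomas Theorem~\ref{thesecondhorcrux} together with the Gau\ss--Codazzi--Mainardi and Gau\ss--Thomas identities. The paper instead collapses the problem to ordinary Riemannian hypersurface calculus at the outset, using two simplifying facts you do not mention: (a)~acting on a weight $w$ density, $I\csdot\hat D$ is the scalar Laplace--Robin operator $\nabla_n + w\rho - \tfrac{s}{d+2w-2}(\Delta+wJ)$ in a choice of scale; and (b)~the rigidity density has a canonical bulk extension $K_{\rm e}=(\IIo^{\rm e})^2$ with the explicit Riemannian formula $\IIo^{\rm e}_{ab}=\nabla_a n_b + sP_{ab}+\rho g_{ab}$. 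With these, $\dddot K_{\rm e}$ is a closed-form Riemannian scalar on $M$ before any restriction, and the evaluation along $\Sigma$ is handled by ordinary (FORM-assisted) hypersurface expansions rather than tractor machinery; the Gau\ss--Thomas formula never enters. Both routes are valid, but the paper's observation about $\IIo^{\rm e}$ removes the heavy tractor bookkeeping your plan would incur. One smaller point: your invocation of tangentiality of $(I\csdot\hat D)^k$ to justify invariance of $\dddot K_{\rm e}|_\Sigma$ is not quite the right mechanism here, since the tangentiality statement of Theorem~\ref{Pk} is tied to the specific weight $\tfrac{k-d+1}{2}$, which $K_{\rm e}\in\Gamma(\ce M[-2])$ does not have for $k=3$, $d=5$; the well-definedness of $\dddot K_{\rm e}|_\Sigma$ instead follows because $K_{\rm e}$ is a canonical density determined (to sufficiently high order) by the embedding via the uniqueness of $\sigma$ in Theorem~\ref{Sc1}. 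This does not affect the rest of your argument.
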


\noindent
The proof of this result is given in Section~\ref{prufs}.
Applying Equation~\nn{FORMAGGINO}, Lemma~\ref{dots}, and Theorem~\ref{P4N} we obtain the following result. 
\begin{theorem}\label{Voldemort}
Let $d=5$. Then the obstruction density is given by the sum of conformally invariant terms \begin{align*}
{\mathcal B}_{\Sigma} = -\tfrac{1}{120} \big[
Ob &
+16  L^{ab}\big(  \IIo^3_{(ab)\circ} 
+ \tfrac{9}{4}\IIo_{(a} \csdot \Fo_{b)\circ}
+ \tfrac{1}{96} K \IIo_{ab} \big) 
+ 54 W_{ABCN}^\top \hdb^A F^{BC}\\&
+ 3 (4 W_{\hat n}{}^{ab}{}_{\hat n} + 3 \Fo^{ab}) \bn^c W_{cab \hat n}^\top 
- 13 \IIo \csdot \bar{B}
+ (12 W_{\hat n}{}^{ad}{}_{\hat n} + 69 \Fo^{ad}) \IIo^{bc} \bar{W}_{abcd} \\&
+ 24 \IIo^a_b W_{acd \hat{n}}^\top W^{bcd}{}_{\hat n}^\top
+ 60 \IIo \csdot \Fo^2
+ \tfrac{89}{2} K \IIo \csdot \Fo
 \big] \, ,
\end{align*}
where
\begin{align*}
Ob
:=&\phantom{+}{{ \bar{\Delta} \bar{\nabla} \csdot \bar{\nabla} \csdot \IIo}}
+6\,{ \bar{\nabla} \csdot B_{\hat n}^\top}
+6\,{ \IIo} \csdot { B^\top}
-6\,{ \bar{P}} \csdot { C_{\hat n}^\top} \\
&-6\,{ \bar{P}}{}^{ab} { \bar{\nabla}^c W_{{c a b \hat{n}}}^\top } 
+9\,\bar{P} \csdot \bar{\Delta} \IIo
-{{{ \bar{J}}\,{ \bar{\nabla} \csdot \bar{\nabla} \csdot  \IIo}}}
+6\,{ (\bar{\nabla} \IIo)} \csdot {( \bar{\nabla} \bar{P})}
+4\,{ (\bar{\nabla} \csdot \IIo)} \csdot { \bar{\nabla} \bar{J}}
+3\,{ \IIo} \csdot { \bar{\nabla} \bar{\nabla} \bar{J}}\\
&+12\,{ \IIo} \csdot { \bar{\nabla} \bar{\nabla} \csdot \Fo}
+ 15 \, \Fo \csdot \bar{\Delta} \IIo
+{14\,{ (\bar{\nabla} \csdot \IIo)} \csdot { (\bar{\nabla} \csdot \Fo)}}
+{12}\,{ (\bar{\nabla} \IIo)} \csdot {( \bar{\nabla} \Fo)}
- 18 \, W_{abc \hat n}^\top \bn^a \Fo^{bc} \\
&-\tfrac{5}{6}\,{ \IIo} \csdot { \bar{\nabla} \bar{\nabla} K}
-{\tfrac{2}{3} {( \bar{\nabla} \csdot \IIo)} \csdot {{ \IIo} \csdot {( \bar{\nabla} \csdot \IIo)}}} 
- \tfrac{7}{4} (\bn \csdot \IIo) \csdot \bn K\\
&+24\, \IIo \csdot \bar{P} \csdot \Fo
-{9\,{ \bar{J}}\,{ \IIo} \csdot { \bar{P}}} 
-15\,{ \bar{J}}\,{ \IIo} \csdot { \Fo}
+{\tfrac{5}{3}\,K{ \IIo} \csdot { \bar{P}}} \\
&- 6 (\bar{\nabla} H) \csdot \big[
{\tfrac {1}{{8}}{\bar{\nabla} K}}
+{\tfrac {1}{{3}}{{\IIo \csdot ( \bar{\nabla} \csdot \IIo)} }}
 -\,{ \bar{\nabla} \csdot \Fo} \big]  \\
&+6\,H{ \bar{\nabla} \csdot \bar{\nabla} \csdot \Fo}
-{\tfrac{3}{2} {H{ \IIo} \csdot { \bar{\Delta} \IIo}}}
- 2 H (\bn \csdot \IIo)^2
-{\tfrac {3}{4}}\,H{ \bar{\Delta} K}
+12\,H{ \Fo} \csdot { \bar{P}}
+3\,HK{ \bar{J}} \\
&+{\tfrac {3}{{2}}{H{ \IIo}{}^{{a b}}{ \bar{\nabla}^c W_{{c a b \hat{n}}}^\top }}}
+{\tfrac {3}{{2}}{H{ \bar{W}}_{{a b c d}}{ \IIo}{}^{{a d}}{ \IIo}{}^{{b c}}}}
-12\,H \big[
{ \IIo} \csdot { C_{\hat n}^\top} 
+\tfrac{1}{2}\,{H}{{\rm{tr}} \IIo^3} 
-\,{H}{ \IIo} \csdot  \Fo \big]\in \Gamma(\ce \Sigma[-5])\, .
\end{align*}
\end{theorem}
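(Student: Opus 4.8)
The plan is to read off $\mathcal{B}_\Sigma$ directly from Equation~\nn{FORMAGGINO}, which already presents it as $\tfrac1{1440}(\bar D_A\circ\top)$ applied to the tractor $\PanE N^A - 3\,I\csdot\hd^3(X^A K_{\rm e})$. So the task splits into three pieces: evaluating this tractor in terms of hypersurface data, applying the tangential projection $\top$, and applying the hypersurface Thomas operator $\bar D_A$ and simplifying. The inputs are Theorem~\ref{P4N}, which gives $\PanE N^A = N^A A + X^A B + \bar Z^A_a C^a + \bar Y^A D$ with $A$, $C^a$, $D$ explicit (and the full scalar $B$, obtained by applying the central formula of Theorem~\ref{Big_Formaggio} to $N^A$ as in the proof of Theorem~\ref{P4N} in Section~\ref{prufs}, whose leading term is $-\tfrac13\bar\Delta\bn\csdot\bn\csdot\IIo$); Lemma~\ref{dots}, which evaluates $\dddot K_{\rm e}|_\Sigma$ as a sum of conformal invariants; and the hypersurface tractor calculus of Section~\ref{Bground}, including the bulk/boundary Thomas-operator comparison of Theorem~\ref{thesecondhorcrux}.

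First I would handle the rigidity-correction term. Since $X^A K_{\rm e}$ has weight $-1$, iterating the commutator of $I\csdot\hd$ with multiplication by $X^A$ (part of the solution-generating algebra of~\cite{GW}) rewrites $I\csdot\hd^3(X^A K_{\rm e})$ as $X^A\dddot K_{\rm e}$ plus correction terms built from $X^A$, $\hd I$, $I$ and the lower iterates $\ddot K_{\rm e}$, $\dot K_{\rm e}$, $K_{\rm e}$, whose restrictions to $\Sigma$ are already known (see~\cite{BGW}). Substituting Lemma~\ref{dots} then exhibits this whole term explicitly as invariant hypersurface tractor scalars.

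Next, $\top$ is projection with the tractor first fundamental form ${\rm I}^A_B=\delta^A_B-N^A N_B$; since $N^2=1$ along $\Sigma$ this kills the $N^A$ slot of $\PanE N^A$ and fixes the hypersurface injectors, so $\top\PanE N^A = X^A B + \bar Z^A_a C^a + \bar Y^A D$, and the correction tractor is projected likewise. Then I would apply $\bar D_A$ using the standard formulas for the hypersurface Thomas operator on $X^A(\cdot)$, $\bar Z^A_a(\cdot)$, $\bar Y^A(\cdot)$ (involving $\bar\Delta$, $\bar\nabla$, $\bar J$, $\bar P$ and $\bar d=4$): the term $\bar D_A(X^A B)$ contributes the tangential fourth-derivative part $\bar\Delta\bn\csdot\bn\csdot\IIo$ and generates the $Ob$ combination, while $\bar D_A(\bar Y^A D)$, $\bar D_A(\bar Z^A_a C^a)$ and the correction term produce the lower-order invariant pieces. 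At this stage every term is a hypersurface expression in $\IIo$, $\Fo$, $K$, $H$, $\bar P$, $\bar J$, $\bar W$, $W^\top_{cab\hat n}$, $C^\top_{\hat n}$ and the Bach tensors $B^\top$, $\bar B$.

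The last, and by far the most laborious, step is to reorganize this output into the stated form. I would use the Gauss--Codazzi--Mainardi identities~\nn{trfreecod} and their traces, the definition of the Fialkow tensor, the identity expressing $\hdb K$ through curvatures, and the definitions of the map $L^{ab}$ and of $W_{ABCN}^\top\hdb^A F^{BC}$ quoted in Lemma~\ref{dots}, to collect all tangential fourth-order terms together with the lower-order terms into $Ob$ plus the enumerated manifestly invariant terms. Conformal invariance of the final expression---which must hold because $\mathcal{B}_\Sigma$ is the weight $-5$ obstruction density of Theorem~\ref{Sc1}---is a stringent consistency check. The main obstacle is purely one of bookkeeping: the scalar $B$ coming from Theorem~\ref{Big_Formaggio} applied to the normal tractor carries many terms, and the repeated conversions between bulk and hypersurface tractor operators must be tracked carefully; there is no conceptual difficulty once $\PanE N^A$, $\dddot K_{\rm e}|_\Sigma$, and the $\bar D_A$-on-injector formulas are assembled.
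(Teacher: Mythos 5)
Your proposal matches the paper's proof. Both start from Equation~\nn{FORMAGGINO}, use Theorem~\ref{P4N} for $\PanE N^A$ and Lemma~\ref{dots} for $\dddot K_{\rm e}$, and then apply $\bar D_A\circ\top$ together with the hypersurface tractor calculus to assemble the result; the paper outsources the same bookkeeping you describe (commuting $X^A$ past $I\csdot\hd$ via the solution-generating algebra, projecting with $\top$, expanding $\bar D_A$ on the injectors, and reorganizing into $Ob$ plus manifestly invariant terms) to the FORM programs \texttt{ID3xK.frm} and \texttt{Obstruction-d5.frm}.
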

\noindent
Since ${\mathcal B}_{\Sigma}$
is the variation of $E_{\rm GW}$,
 the leading term of $Ob$ demonstrates that 
$E_{\rm GW}$
satisfies the variational Property~(\ref{property}) of Definition~\ref{WillDef} and is therefore a higher Willmore energy. 
By inspection, the same holds 
for $E_{\rm Wm}$ and in turn all of the higher Willmore energies discussed above.
The proof of the above theorem is given in Section~\ref{prufs}. Note also that a formula for the~$d=5$ obstruction density in terms of a unit defining function, its jets, and a given choice of scale but not decomposed into hypersurface invariants, was presented in~\cite{hypersurface_old}.

\section{Conformally invariant powers of the Laplacian: Existence}

\label{doIcogito}

We now study the existence of  conformally invariant operators with no intrinsic counterparts on
generally conformally curved manifolds whose existence were established in~\cite{Will1}.

\subsection{Conformally invariant Laplacian squares  in two dimensions}

In dimensions $n\geq 3$, the Paneitz operator of Equation~\nn{P4-intr} expressed in terms of
the trace-free Ricci tensor and scalar curvature is given by
\begin{align*} 
 P_4  =\Delta^2 &- 
\tfrac{(n-1)^2-5}{2n(n-1)} \hh \nabla_a \circ Sc \circ \nabla^a
-\tfrac{(n+2)(n-2)(n-4)}{16n(n-1)^2} Sc^2
-\tfrac{n-4}{4(n-1)} \big((\Delta Sc)
+\tfrac{(n-2)(n+2)}{4n(n-1)}\hh Sc^2\Big)\\[2mm]
&+\tfrac1{n-2}\Big(4\nabla_a \circ \accentset{\,\circ}{Ric}{}^{ab}
\circ \nabla_b-\tfrac{n-4}{n-2}\accentset{\,\circ}{Ric}^2
\Big)
\, .
\end{align*}
Trouble in dimension two is signalled by the pole at $n=2$ in this
formula, although the residue vanishes because the trace-free Ricci
tensor is identically zero in this dimension. In fact, given only the
intrinsic data $(\Sigma^2,\cc_\Sigma)$, there is no natural
conformally invariant linear differential operator with leading terms
$\Delta^2$: It is not difficult to check that there is no choice of
parameters $\alpha$, $\beta$, $\gamma$ and $\delta$ such
that~$\Delta^2 +\alpha Sc \hh \Delta + \beta (\nabla_a Sc)\nabla^a +
\gamma (\Delta Sc) + \delta\hh Sc^2$ is an invariant operator acting
on weight one densities.

One way to view the failure of Paneitz in dimension two is to observe
that the numerator of the $n=2$ pole becomes conformally invariant (by
virtue of vanishing identically) in $n=2$ dimensions.  Notice,
however, that away from $n=2$ dimensions, $\mathring{Ric}/(n-2)$
transforms under conformal changes of metric $ \hat g = e^{2\omega} g$
by a shift of~$\nabla_{(a}\nabla_{b)\circ}\hh \omega +
(\nabla_{(a}\omega)\nabla_{b)\circ} \hh\omega$. In dimension two it is
still the case that such terms need to be cancelled.

To produce an invariant $P_4$ operator in two dimensions, additional data in the form of a  suitable tensor with the
transformation property $\nabla_{(a}\nabla_{b)\circ} \omega + (\nabla_{(a}\omega)\nabla_{b)\circ} \omega$
is necessary. The data of a tensor transforming this way is called a M\"obius structure and is also required to write down a tractor connection for two-dimensional conformal  manifolds~\cite{CalderbankCrelle}. A natural way to generate a M\"obius structure is by embedding the  2-manifold~$\Sigma$ in some ambient conformal manifold. The following result relies on this mechanism.

\begin{lemma}
\label{Horcrux}
Let $d=3$. Then
the mapping
\begin{align*}
f\mapsto
\bar{\Delta}^2 f &+ 4 \bar{\nabla}^a \circ \mathcal{P}_{ab} \circ \bar{\nabla}^b f \\
&+ \left[ 2 \bn \csdot \bn \csdot \mathcal{P} - \bar{\Delta} \mathcal{J} + 2 \mathcal{P}^2 - \mathcal{J}^2 + 2 \bn_a \left(\IIo^{ab} \bn \csdot \IIo_b \right) + 2 (\bn \csdot \IIo)^2 + \tfrac{1}{4} K^2 \right] f
\end{align*}
with $\mathcal{J} := \mathcal{P}_a^a$ where
\begin{align*}
\mathcal{P}_{ab} := P^\top_{ab} + H \IIo_{ab} + \tfrac{1}{2} H^2 \bar{g}_{ab} - \tfrac{1}{2} K \bar{g}_{ab}\,,
\end{align*}
defines a conformal squared Laplacian operator 
$$
\Gamma(\ce \Sigma[1])\to \Gamma(\ce \Sigma[-3])\, .
$$
\end{lemma}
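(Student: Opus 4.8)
The plan is to obtain this operator as a special case of the general machinery already developed in the excerpt, namely as the extrinsically-coupled conformal Laplacian square $\PanE$ for a surface $\Sigma^2 \hookrightarrow (M^3,\cc)$, and then read off the explicit formula. More precisely, by Theorem~\ref{Pk} with $k=4$ and $d=3$, the operator $(I\csdot\hd)^4\bdot|_\Sigma$ is tangential on $\Gamma(\ct^\Phi M[\tfrac{k-d+1}{2}])=\Gamma(\ct^\Phi M[1])$; specializing to scalars (the bundle $\ce M[1]$) it descends to a well-defined conformally invariant operator $\Gamma(\ce\Sigma[1])\to\Gamma(\ce\Sigma[-3])$ with leading term a nonzero multiple of $\bar\Delta^2$ (the hypersurface Laplacian power statement of Theorem~\ref{Pk}). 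Thus the existence and conformal invariance are for free; what remains is the explicit computation, and the main content of the proof is verifying that this descended operator equals the displayed formula.

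First I would set up the computation in a fixed scale $g\in\cc$. Using Remark~\ref{PIB}, write $\PanE\bar f = s^{-\frac{3+d}2}(\Delta^o+\cdots)^4 (s^{\frac{d-5}2}\psi)|_\Sigma$ in $d=3$; but the cleaner route is to instead invoke Corollary~\ref{forIamamerecorollary} — or rather its dimensional-continuation cousin — which gives $\PanE$ acting on weight $\tfrac{5-d}2$ densities as $\bar\Delta^2 + \bar\nabla^a\circ(\cdots)\circ\bar\nabla^b - \tfrac{5-d}2\,\mathcal{Q}^{\Sigma\hookrightarrow M}_{d-1}$. The corollary is stated for $d\geq5$, but the basis of conformal hypersurface invariants is stable in $d$ and the coefficients are rational in $d$ (the ``dimensional continuation'' argument already used in the proof of Lemma~\ref{fiatlux}, citing~\cite{GPt,GOpet}); so I would substitute $d=3$ into that universal formula. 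The only delicate point is that several coefficients in Equation~\nn{ext-Q} carry a factor $\tfrac1{d-4}$, which is regular at $d=3$, and the overall prefactor $\tfrac{5-d}2$ becomes $1$, so one must track cancellations of the $\IIIo=-(d-3)\Fo$ type terms: at $d=3$ the third fundamental form $\IIIo$ degenerates, and what survives is expressed through $\Fo$, $K$, $\IIo$ and $\bar P$ only. Collecting the first-derivative-order terms at $d=3$ should produce $4\bar\nabla^a\circ\mathcal{P}_{ab}\circ\bar\nabla^b$ with $\mathcal{P}_{ab}=P^\top_{ab}+H\IIo_{ab}+\tfrac12 H^2\bar g_{ab}-\tfrac12 K\bar g_{ab}$ after using $\bar P_{ab}=P^\top_{ab}-\Fo_{ab}+H\IIo_{ab}+\tfrac12\bar g_{ab}H^2$ (the Fialkow relation from Section~\ref{conbed}) to re-express $\bar P_{ab}$, the $\Fo_{ab}$ term, the $\IIo^2_{ab}$ term and the $K\bar g_{ab}$ term in combination; note $\mathcal{P}_{ab}$ is precisely the Möbius structure $\bar P$ would be if the intrinsic Schouten vanished in $2$ dimensions — this is the geometric content flagged in the surrounding text. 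The zeroth-order term is then $\mathcal{Q}^{\Sigma\hookrightarrow M}_{2}(g)$ read off from Equation~\nn{ext-Q} at $d=3$, which after the same substitutions should collapse to $2\bn\csdot\bn\csdot\mathcal{P}-\bar\Delta\mathcal{J}+2\mathcal{P}^2-\mathcal{J}^2+2\bn_a(\IIo^{ab}\bn\csdot\IIo_b)+2(\bn\csdot\IIo)^2+\tfrac14 K^2$.

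The main obstacle I expect is bookkeeping rather than conceptual: at $d=3$ many of the ``higher'' ambient curvature terms in Equation~\nn{ext-Q} — those involving $\bar W_{abcd}$, $W^\top_{cab\hat n}$, $C^\top_{\hat n}$ — either vanish identically (a rank-two surface has no Weyl tensor; in $d=3$ the ambient Weyl tensor vanishes too, so $W^\top_{abc\hat n}$, $\bar W_{abcd}$ and the relevant Cotton pieces are constrained) or get absorbed, via the $d=3$ Gauss–Codazzi relations of Equation~\nn{trfreecod}, into the $\IIo,\Fo,K,\bar P$ terms. One must be careful that $\Fo_{ab}$ is \emph{not} independently a canonical third fundamental form at $d=3$ (Section~\ref{conbed}), so every $\Fo$ must ultimately be rewritten; this is exactly why the final formula involves $\mathcal{P}_{ab}$ and $K$ but no $\Fo$. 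I would also double-check the normalization so that the leading term is exactly $\bar\Delta^2$ (not a multiple), fixing the overall scaling convention of $(I\csdot\hd)^4$ accordingly, and verify a low-order consistency check — e.g. the totally umbilic case $\IIo=0$, $K=0$, where the operator must reduce to $\bar\Delta^2+4\bar\nabla^a\circ(P^\top_{ab}+\tfrac12 H^2\bar g_{ab})\circ\bar\nabla^b+[\cdots]$, matching the known two-dimensional Möbius-Paneitz operator built from the induced Möbius structure. Once these checks pass, the lemma follows.
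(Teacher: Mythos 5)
Your starting point is wrong, and the error propagates into an answer that does not match the lemma.

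First, Theorem~\ref{Pk} explicitly requires $k\leq d-1$; at $k=4,\ d=3$ this reads $4\leq 2$ and fails. The operator $(I\csdot\hd)^4$ acting on $\Gamma(\ct^\Phi M[1])$ is \emph{not} tangential when $d=3$ (the $\mathcal{O}(\sigma^{d-k+1})$ error in the third identity of Lemma~\ref{IDcomm} is $\mathcal{O}(\sigma^0)$ and so does not vanish along $\Sigma$), so $\PanE$ simply does not exist for surfaces in three-manifolds. Your "existence and conformal invariance are for free" opening step is therefore unfounded.

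Second, and more decisively, the operator in Lemma~\ref{Horcrux} is \emph{not} the dimensional continuation of $\PanE$ at all. The paper's proof continues the simpler operator $\hdb^A\circ P_2^{\Sigma\hookrightarrow M}\circ\hdb_A$ (note $P_2^{\Sigma\hookrightarrow M}$ corresponds to $k=2$, which \emph{does} satisfy $k\leq d-1$ at $d=3$). By Theorem~\ref{Big_Formaggio}, $\PanE$ differs from the $\hd^T P_2 \hd^T$ piece by an entire package of $L^{AB}$-, $F^{AB}$- and $W$-tractor corrections, and these corrections do not vanish at $d=3$. Concretely, comparing the first-derivative symbols: after multiplying $\hdb^A P_2\hdb_A$ by $\tfrac{2}{5-d}$ to normalize the leading $\bar\Delta^2$, its $\IIo^2_{ab}$ coefficient is $\tfrac{4}{d-5}$ and its $K\bar g_{ab}$ coefficient is $\tfrac{(d-3)^2(d-7)}{2(d-1)(d-2)(d-5)}$, while Corollary~\ref{forIamamerecorollary}'s are $8$ and $\tfrac{d^2-4d-1}{2(d-1)(d-2)}$. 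These are genuinely different rational functions of $d$. If you carry out your plan at $d=3$ (using $\IIo^2_{ab}=\tfrac12 K\bar g_{ab}$ in hypersurface dimension two, and the full Fialkow relation $\bar P_{ab}=P^\top_{ab}+H\IIo_{ab}+\tfrac12H^2\bar g_{ab}-F_{ab}$ with $F=\Fo+\tfrac{K}{2(d-1)(d-2)}\bar g$), the first-order operator you obtain is $4\bar\nabla^a\circ\big(P^\top_{ab}+H\IIo_{ab}+\tfrac12 H^2\bar g_{ab}\boldsymbol{+}\tfrac12 K\bar g_{ab}\big)\circ\bar\nabla^b$, with the \emph{opposite sign} on the $K\bar g_{ab}$ term relative to the lemma's $\mathcal P_{ab}$. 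You would thus produce a (plausibly invariant) squared-Laplacian but not the one the lemma claims; there is no step in your outline that would reconcile this discrepancy. The remedy is to run the dimensional continuation on $\hdb^A P_2^{\Sigma\hookrightarrow M}\hdb_A$ instead, whose $\bar P$ and $\Fo$ coefficients combine to produce $2(d-3)\mathring{\bar P}_{ab}=2\mathring{\overline{Ric}}_{ab}$, which then vanishes cleanly at $d=3$ and yields exactly $4\mathcal P_{ab}$.
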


\begin{proof} 
The claimed conformal variation can be verified by direct computation. Alternatively, there is a simple dimensional continuation argument. Consider, a $d-1$ dimensional hypersurface~$\Sigma$ embedded in a conformal manifold $(M,\cc)$ with $d\geq 4$. The operator 
\begin{equation}\label{thisoperator}
\hdb^A \circ P_2^{\Sigma \hookrightarrow M}\circ \hdb_A \end{equation}
maps $\Gamma(\ce \Sigma[\frac{5-d}2])\to \Gamma(\ce \Sigma[-\frac{3+d}2])$, where
$
P_2^{\Sigma \hookrightarrow M}
$
is defined in Theorem~\ref{Pk} and is given by
$$
P_2^{\Sigma \hookrightarrow M}
=\Delta^{\!\top} + \tfrac{3-d}2 \big(\bar J -\tfrac{1}{2(d-2)}\hh K\big)\, . 
$$
The operator~\nn{thisoperator} can be expressed in terms of the  Levi-Civita connection as
follows 
\begin{align*}
\hdb^A P_2^{\Sigma \hookrightarrow M} \hdb_A =\: & \tfrac{5-d}{2} \bar{\Delta}^2 \\
&+ \bn^a \circ \Big[ -2(d-5) \bar{P}_{ab} - 4(d-4) \Fo_{ab} - 2 \IIo^2_{ab} - \tfrac{(d-3)^2(d-7)}{4(d-1)(d-2)} K \bar{g}_{ab} \\
&\;\;\; \;\;\;\;\;\;\;\;\;\;\;\;\;+ \tfrac{1}{2} (d-3)(d-5)\bar{J} \bar{g}_{ab} \Big] \circ \bn^b  \\
&- \tfrac{d-5}{2} \Big( - \tfrac{d-5}{2} \bar{\Delta} \bar{J} - (d-5) \bar{P}^2 + \tfrac{1}{4} (d-1)(d-5) \bar{J}^2 + \tfrac{(d-3)^2}{4(d-1)(d-2)} \bar{\Delta} K\\
& - 2 (d-5) \Fo \csdot \bar{P} - (d-5) \Fo^2 - \tfrac{(d-3)^2 (d-5)}{4(d-1)(d-2)} K \bar{J} - \tfrac{d-5}{4(d-1)(d-2)^2} K^2 + 2 \bn \csdot \bn \Fo \\
&+ \tfrac{2}{d-2} \bar{\nabla}^a (\IIo_{ab} \bar{\nabla} \csdot \IIo^b) - \tfrac{d-5}{(d-2)^2} (\bn \csdot \IIo)^2 \Big) \, .
\end{align*}
As written, this identity cannot be dimensionally continued because neither $\bar{P}$ nor $\Fo$ are defined in hypersurface dimension $d-1 = 2$. Instead, observe that for all $d \geq 4$,
$$
\Fo = P^\top - \mathring{\bar{P}} + H \IIo - \tfrac{1}{d-1}\: \bar g\hh  \big(\bar{J} -  \tfrac{d-1}{2} H^2 + \tfrac{1}{2(d-2)} K)\quad\mbox{ and }\quad
\mathring{\bar{P}}= \mathring{\overline{Ric}}\hh  /(d-3)\, .$$
Note that $\mathring{\overline{Ric}}$ is well defined in all dimensions so can be ``dimensionally continued'' to hypersurface dimension $d-1 = 2$ (where it vanishes). Also $\bar{J}$ is defined in hypersurface dimension $d-1=2$ by the identity $\bar{J} = \overline{Sc}/(2(d-2))$. Thus, in these terms, we have
\begin{align*}
\hdb^A P_2^{\Sigma \hookrightarrow M} \hdb_A =&
\hspace{1mm} \tfrac{5-d}{2} \bar{\Delta}^2 
\\&+ \bar{\nabla}^a \circ \Big[ 4(4-d) P^\top_{ab} 
+ 2\mathring{\overline{Ric}}_{ab}
- 2 \IIo^2_{ab}
- \tfrac{d^2 - 12d+31}{4(d-2)} K \bar{g}_{ab} 
+ \tfrac{(d-3)^2}{2(d-1)} \bar{J} \bar{g}_{ab} \\&\hspace{13mm}
+ 4(4-d) H \IIo_{ab} 
+ 2(4-d) H^2 \bar{g}_{ab} \Big] \circ \bar{\nabla}^b \\
&- \tfrac{d-5}{2} \Big(
(5-d) (P^\top)^2 
+ \tfrac{(d-1)(d-5)}{4} \bar{J}^2 
- \tfrac{d-1}{2} \bar{\Delta} J 
+ 2 \bar{\nabla} \csdot \bar{\nabla} \csdot P^\top \\
&+ \tfrac{(d-1)(d-5)}{4} H^4
 - \tfrac{(2d-3)(d-5)}{2(d-2)} H^2 K 
- (d\!-\!5) H^2 \bar{J} 
- 2(d\!-\!5) H \IIo \csdot P^\top 
+ 2 H \bar{\nabla} \csdot \bar{\nabla} \csdot \IIo \\&
+ 2 H \bar{\Delta} H 
- \tfrac{(d-5)^2}{4(d-2)} K \bar{J} 
+2 \IIo \csdot \bar{\nabla} \bar{\nabla} H 
+4 (\bar{\nabla} \csdot \IIo) \csdot \bar{\nabla} H 
- \tfrac{(d-5)}{(d-2)^2} (\bar{\nabla} \csdot \IIo)^2 \\&
+ \tfrac{(d-5)}{4(d-2)} \bar{\Delta} K 
+2 (\bar{\nabla} H)^2 
+ \tfrac{2}{d-2}\bar{\nabla}^a (\IIo_{ab} \bar{\nabla} \csdot \IIo^b) \Big)\,.
\end{align*}
Taking the limit where $d \rightarrow 3$ so that $\mathring{\overline{Ric}} = 0$ and defining $\mathcal{P} := P^\top + H \IIo+ \tfrac{1}{2} H^2 \bar{g} - \tfrac{1}{2} K \bar{g}$ completes the proof.
\end{proof}

\subsection{Conformally invariant Laplacian cubes  in four dimensions}

The operator 
\begin{equation}\label{throwback}
\hd^{T A}\circ
P_4^{\sss\Sigma\hookrightarrow M}
\circ
 \hd^T_{\hh A}
\end{equation}
defines a mapping
$$
\Gamma\big(\ce \Sigma\big[\tfrac{7-d}2\big]\big)
\longrightarrow
\Gamma\big(\ce \Sigma\big[\tfrac{-5-d}2\big]\big)\, ,
$$
with leading derivative term proportional to $\bar \Delta^3$;
see~\cite{Will2}. When $d=5$ (so $\Sigma$ is a four manifold) this defines a sixth order Laplacian power~$P_6^{\Sigma\hookrightarrow M}$.
An explicit Riemannian formula for $P_6^{\Sigma\hookrightarrow M}$ follows as a direct corollary of Theorem~\ref{Big_Formaggio}, however this necessarily involves many terms, see for example already the result of~\cite{GOpet} for the intrinsic sixth order Laplacian power of Graham, Jennes, Mason, and Sparling (GJMS)~\cite{GJMS} for conformal manifolds of dimension five and higher.  However, when $\Sigma$ is embedded as the conformal infinity of a Poincar\'e--Einstein structure it is possible to write down a relatively compact formula. Note that there is no contradiction with an old result of Graham that no 
sixth order GJMS operator exists on generally curved conformal four manifolds~\cite{GraNon}. 
That result relies on the fact that the Bach tensor intrinsic to a four manifold is a conformal invariant, indeed it appears as the residue of a $1/(d-5)$ pole in the intrinsic result of~\cite{GOpet}.
However, the operator $P_6^{\Sigma\hookrightarrow M}$ above
probes data not fixed by the intrinsic conformal geometry of $\Sigma$ which allows for the replacement of the intrinsic Bach tensor by an object with the correct transformation property:
\begin{theorem}\label{Ronkonkoma}
Let 
$\Sigma$ be embedded as the conformal infinity of a Poincar\'e--Einstein structure
~$(M,\cc)$.
Then the
 mapping
\begin{align*}
f\mapsto 
\bar{\Delta}^3 f 
&- 3 \bar{J} \bar{\Delta}^2 f 
+ 16 \bar{P} \csdot \bn \bn \bar{\Delta} f 
+10 (\bn \bar{J}) \csdot \bn \bar{\Delta} f 
+ 16 (\bn \bar{P}) \csdot (\bn \bn \bn f) \\
& +\Big((\bar{\Delta}\bar{J})\hh \bar{g}
+ 20 (\bn \bn \bar{J})
 -\bar{J}^2 \bar{g}
- 16 \bar{J} \bar{P}
- 24 \bar{P}^2 \bar{g}
+ 32\bar{W}(\pdot, \bar{P}, \pdot)
+ 144 \bar{P}^2
+ 16 B^\top
\Big) \csdot  \bn \bn f \\
&+ \Big( 8 (\bn \bar{\Delta} \bar{J})
- 7 (\bn \bar{J}^2)
+ 72  (\bn \bar{J}) \csdot \bar{P}
+ 32 (\bn \bar{P}^2)
- 80 \bar{C}(\pdot, \bar{P})
+ 16 (\bn \csdot B^\top)
\Big) \csdot \bn f \\
&+ \Big( (\bar{\Delta}^2 \bar{J})
+ 3 \bar{J}^3
- 24 \bar{J} \bar{P}^2
-5 \bar{J} (\bar{\Delta} \bar{J})
+ 8 \bar{P} \csdot \bar{W}(\pdot, \bar{P}, \pdot)
+ 48 \bar{P}^3
+16 \bar{P} \csdot (\bn \bn \bar{J}) \\
&\phantom{+\Big(}\;+ 8 (\bn \bar{P})^2
-4 \bar{C}^2
+ 2 (\bn \bar{J})^2
- 16 \bar{P} \csdot (\bar{B} - B^\top)
+ 8 (\bn \csdot \bn \csdot B^\top)
\Big) f \, ,
\end{align*}
where $B^\top$ is the projection of the Bach tensor of $(M,g)$ to the hypersurface $\Sigma$, defines a conformal cubed-Laplacian operator
$$
\Gamma(\ce \Sigma[1])\to \Gamma(\ce \Sigma[-5])\, .
$$
\end{theorem}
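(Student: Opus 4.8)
The operator in question is conformally invariant for a structural reason, so the real content of the theorem is the explicit Riemannian formula together with the identification of the leading term. The plan is to start from the operator $\hd^{TA}\circ \PanE\circ \hd^T_{A}$ of Equation~\nn{throwback}: by Theorem~\ref{Pk} (the case $k=4$, which produces $\PanE$) together with the strong invariance asserted in Theorem~\ref{Big_Formaggio}, this is a conformally invariant differential operator $\Gamma(\ce\Sigma[\tfrac{7-d}2])\to\Gamma(\ce\Sigma[\tfrac{-5-d}2])$, and it is well defined along $\Sigma$ since $\PanE$ is (mapping $\Gamma(\ct^\Phi M[\tfrac{5-d}2]|_\Sigma)\to\Gamma(\ct^\Phi M[\tfrac{-3-d}2]|_\Sigma)$) while the two $\hd^T$ factors are differential operators intrinsic to $\Sigma$, fed in and out through the identification $\ct^\perp M|_\Sigma\cong\ct\Sigma$. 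Setting $d=5$ gives an operator $\Gamma(\ce\Sigma[1])\to\Gamma(\ce\Sigma[-5])$, and a leading-symbol computation — entirely analogous to those carried out for $\hdb^{A}P_2^{\Sigma\hookrightarrow M}\hdb_A$ in the proof of Lemma~\ref{Horcrux} and for $\hd^{TA}P_4^{\Sigma\hookrightarrow M}\hd^T_A$ in~\cite{Will2} — shows the leading derivative term is a nonzero constant multiple of $\bar\Delta^3$, which we normalize to $1$; this already makes it a ``cubed Laplacian''.

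It remains to compute the lower-order terms. First I would record the component formula for the hypersurface tangential Thomas--$D$ operator $\hd^T_A$ acting on a density $f\in\Gamma(\ce\Sigma[\tfrac{7-d}2])$ (from Equation~\nn{Dnolog} adapted to $\Sigma$ and then projected with the tractor first fundamental form), producing a $\ct\Sigma$-valued density; then apply $\PanE$ to it using the general-tractor formula of Theorem~\ref{Big_Formaggio} with the tractor connection in place; and finally contract with $\hd^{TA}$. This is the bookkeeping-heavy step: one must expand all of the extrinsic-curvature terms appearing in Theorem~\ref{Big_Formaggio}, track the tractor index contractions using the injector relations for $X,Y,Z$ and the formulas for $L_{AB}$, $F_{AB}$ and the $W$-tractor from Section~\ref{conbed}. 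The outcome is an explicit but lengthy universal formula valid for general $d$, with coefficients rational in $d$, some carrying $\tfrac1{d-5}$-type poles inherited from the GJMS-type pieces of the extrinsic Paneitz operator — exactly as anticipated in the remarks preceding the theorem.

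The decisive simplification comes from the Poincar\'e--Einstein hypothesis. Since $\cc$ contains an Einstein metric with conformal infinity $\Sigma$, the embedding is totally umbilic, so $\IIo_{ab}=0$ and hence $K=0$; moreover the higher-order conditions of~\cite{BGW,Goal} force the trace-free Fialkow tensor $\Fo_{ab}$ and the normal Weyl and Cotton components $W^\top_{abc\hat n}$, $W_{\hat n ab\hat n}$, $C^\top_{\hat n ab}$ entering $\PanE$ (and the $W$-tractor used in its derivation) to vanish as well. Consequently every purely extrinsic contribution drops out, the $\tfrac1{d-5}$ coefficients become harmless, and the $d=5$ limit may be taken safely. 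What survives is built from the intrinsic data $\bar\nabla,\bar P,\bar J,\bar W,\bar C,\bar B$ of $(\Sigma,\cc_\Sigma)$ together with a single genuinely extrinsic ingredient, the restriction $B^\top$ of the bulk Bach tensor of $(M,g)$ to $\Sigma$ — precisely the tensor carrying the derivative-of-$\omega$ transformation needed to replace the (conformally invariant, hence forbidden) intrinsic Bach tensor in a conformally covariant sixth-order Laplacian power, which is how Graham's non-existence result~\cite{GraNon} is circumvented. Matching coefficients and rewriting via the contracted Bianchi identities on $\Sigma$ then yields the displayed formula.

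The hard part will be that middle step: the general-$d$ tractor computation of $\hd^{TA}\circ\PanE\circ\hd^T_A$ is long and error-prone, and one must verify that the $\tfrac1{d-5}$ singular coefficients are genuinely cancelled by the extrinsic terms before the Poincar\'e--Einstein specialization is imposed — equivalently, that the $d\to 5$ limit is approached correctly, a dimensional-continuation subtlety of the same flavour as in the proof of Lemma~\ref{fiatlux}. A secondary subtlety is keeping straight the distinction between the intrinsic Bach tensor $\bar B$ and the projected bulk Bach tensor $B^\top$: they enter the final answer only through the combination $\bar B-B^\top$ (and through $B^\top$ alone in the lower-order coefficients), and disentangling them requires the general-dimension relation between the bulk Bach tensor, its hypersurface projection, and $\bar B$ on the umbilic locus. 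As an independent check, the claimed conformal invariance can also be confirmed by a direct conformal-variation computation in the style of the proof of Lemma~\ref{Horcrux}.
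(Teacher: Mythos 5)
Your approach matches the paper's essentially exactly: both start from the operator $\hd^{TA}\circ\PanE\circ\hd^T_A$ of Equation~\nn{throwback}, invoke the Poincar\'e--Einstein vanishings $\IIo=\Fo=C^\top_{\hat n(ab)}=0$ (which the paper derives from Theorem~1.8 of~\cite{BGW}, with the consequent $\hd^T\eqSig\hdb$ and $W_{NABC}\eqSig 0$) to collapse Theorem~\ref{Big_Formaggio} to three surviving terms, and then compute in general $d$ and continue to $d=5$, with $B^\top$ emerging as the sole genuinely extrinsic ingredient. The only difference is one of bookkeeping transparency: the paper delegates the ``long and error-prone'' coefficient-matching step you correctly flag as the crux to explicit FORM scripts rather than hand computation.
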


\noindent
The proof is given in  Section~\ref{prufs}.
Note that the conclusion of the theorem also holds for
any asymptotically Poincar\'e--Einstein structure, meaning 
any metric $g^o$ on $M\backslash \Sigma$ whose trace-free Schouten tensor obeys $\mathring P={\mathcal O}(\sigma^2)$.

Following the discussion above, the appearance of the projected bulk Bach tensor $B^\top$ in Theorem~\ref{Ronkonkoma} 
parallels that of $P^\top$ in Lemma~\ref{Horcrux}.
This is part of a more general picture linked to the conformal fundamental forms of~\cite{BGW}.
Just as a third fundamental form, in the guise of the Fialkow tensor, was used to construct
a tensor of the form $P^\top+\cdots$ with the same transformation properties $\bar P$ in dimensions $d\geq 4$, the (conditional) higher fundamental forms of~\cite{BGW} can be used for the same purpose. 
In particular, in dimension $d= 6$, a conditional fifth fundamental form is given by
$$
\mathring B^\top -\tfrac{d-4}{d-5} \bar B+\cdots
 $$
and is invariant, and hence can be used to construct a tensor with the same transformation properties as~$\bar B$ (when $d\geq 6$). A dimensional continuation argument can then be used to extract the tensor in $d=5$ dimensions required in the above theorem.

\section{Proofs}\label{prufs}

\color{black}

In this section we prove Theorem~\ref{Big_Formaggio} as well as some consequences that we recorded in  Results~\ref{forIamamerecorollary}, \ref{P4N},\ref{dots}, \ref{Voldemort}, and~\ref{Ronkonkoma}.
Our methods rely heavily on hypersurface tractor calculus, in particular the general theory presented in~\cite{Will1,Will2,Staff,Grant} as well as its further development given in~\cite{BGW}. We therefore begin 
by recording a collection of useful tractor identities.


\subsection{Consequences of Leibniz's failure}\label{Flack}

In this section we develop identities involving any weight one density~$\sigma$ and its interactions with the Thomas-$D$ operator and the canonical tractor~$X$. These all follow from straightforward applications of the \textit{Leibniz failure} identity of~\cite{Taronna}, proved in~\cite{Will1}, which encodes the failure of the  Thomas-$D$ operator  to be a derivation:
\begin{proposition}\label{leib-failure}
Let~$T_i \in \Gamma(\ce^\Phi M[w_i])$ for~$i = 1,2$, and~$h_i := d+2w_i$,~$h_{12} := d+2w_1 + 2w_2 - 2$ with~$h_i \neq 0 \neq h_{12}$. Then,
$$\hd^A (T_1 T_2) - (\hd^A T_1) T_2 - T_1 (\hd^A T_2) = -\frac{2}{h_{12}} X^A (\hd_B T_1) (\hd^B T_2)\,.~$$
\end{proposition}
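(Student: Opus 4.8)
The plan is to prove Proposition~\ref{leib-failure} by a direct computation in a chosen metric scale $g \in \cc$, exploiting the explicit formula~\nn{Dnolog} for the Thomas $D$-operator. First I would write $T_1, T_2$ in the splitting determined by $g$ and compute $D^A(T_1 T_2)$ slot-by-slot: the top slot is purely algebraic (multiplication by $w(d+2w-2)$ evaluated at the weight $w_1+w_2$ of the product), the middle slot involves $\nabla^\ct$ acting on the product, and the bottom slot involves $\Delta^\ct(T_1T_2)$ plus a $J$-term. Since the tractor connection $\nabla^\ct$ \emph{is} a derivation, the only genuine failure of the Leibniz rule comes from the second-order term $\Delta^\ct(T_1T_2) = (\Delta^\ct T_1)T_2 + T_1(\Delta^\ct T_2) + 2\,\bm g^{ab}(\nabla^\ct_a T_1)(\nabla^\ct_b T_2)$; the cross-term $2(\nabla^\ct T_1)\pdot(\nabla^\ct T_2)$ is precisely what will produce the right-hand side.

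The key bookkeeping step is to track the weight-dependent normalization factors. Writing $D^A = (d+2w-2)\hat D^A$ at weight $w$, one has $D^A(T_1T_2)$ normalized by $h_{12} = d + 2w_1 + 2w_2 - 2$, while $D^A T_i$ is normalized by $h_i = d+2w_i$ (the paper's $h_i$), and the naive products $(\hat D^A T_1)T_2$, $T_1(\hat D^A T_2)$ carry the ``wrong'' normalizations. After assembling the three slots, the top and middle slots of $\hat D^A(T_1T_2) - (\hat D^A T_1)T_2 - T_1(\hat D^A T_2)$ must be shown to vanish identically---this is a short algebraic identity in $d, w_1, w_2$---and the bottom slot must collapse to $-\tfrac{2}{h_{12}}$ times the bottom slot of $X^A$ (which is $1$) multiplied by the scalar $\bm g^{ab}(\hat D_B T_1)(\hat D^B T_2)$ contracted appropriately; one recognizes $(\hat D_B T_1)(\hat D^B T_2) = h_{AB}\hat D^A T_1 \hat D^B T_2$ picks out exactly the $\bm g^{ab}\nabla^\ct_a T_1 \nabla^\ct_b T_2$ term because the $Y$-slot of one Thomas-$D$ pairs with the $X$-slot ($=0$ for a $\hat D$) of the other and vice versa. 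I would then note that since every object in the claimed identity is conformally invariant (the Thomas $D$-operator, $X$, and the tractor metric all are), verifying it in a single scale $g$ suffices.

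The main obstacle I anticipate is purely organizational: correctly matching the three different normalization constants $h_1, h_2, h_{12}$ across the slots and confirming that the spurious terms in the top and middle slots cancel. Concretely, one must check that the $X^A$-contribution hiding inside $(\hat D^A T_1)T_2$ (from the bottom slot of $\hat D^A T_1$) and inside $T_1(\hat D^A T_2)$ combine with the cross-term from $\Delta^\ct(T_1T_2)$ to leave exactly $-\tfrac{2}{h_{12}}X^A(\hat D_B T_1)(\hat D^B T_2)$; this is where the precise coefficient $2/h_{12}$ is pinned down, and a sign or factor-of-two slip here is the most likely failure mode. Since this proposition is attributed to~\cite{Taronna} and already proved in~\cite{Will1}, I would keep the exposition brief---essentially a clean re-derivation in the notation of this paper---and defer to those references for the fully detailed computation if space is tight.
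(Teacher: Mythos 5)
The paper does not actually prove Proposition~\ref{leib-failure}; it is imported from the references (attributed to Joung--Taronna--Waldron and proved in Gover--Waldron), so there is no in-paper argument to compare against. Your strategy---compute slot-by-slot in a chosen scale using~\nn{Dnolog}, note that the top and middle slots cancel because the coupled connection is a derivation, and isolate the failure in the $X^A$-slot---is the standard route and does work. However, one step as you describe it is wrong and would derail the verification: you claim that $h_{AB}(\hd^A T_1)(\hd^B T_2)$ ``picks out exactly the $\bm g^{ab}\nabla^\ct_a T_1 \nabla^\ct_b T_2$ term because the $Y$-slot of one Thomas-$D$ pairs with the $X$-slot ($=0$ for a $\hd$) of the other.'' The bottom slot of $\hd T$ is not zero; by~\nn{Dnolog} it is $-(d+2w-2)^{-1}(\Delta^\ct T + wJT)$. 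Hence
$$(\hd_B T_1)(\hd^B T_2)=\bm g^{ab}(\nabla^\ct_a T_1)(\nabla^\ct_b T_2)-\tfrac{w_1}{d+2w_2-2}\,T_1\big(\Delta^\ct T_2+w_2 J T_2\big)-\tfrac{w_2}{d+2w_1-2}\,\big(\Delta^\ct T_1+w_1 J T_1\big)T_2\,,$$
and the two $Y$--$X$ cross terms are indispensable, not absent.

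Concretely, in the bottom slot of $\hd^A(T_1T_2)-(\hd^A T_1)T_2-T_1(\hd^A T_2)$ the coefficient of $(\Delta^\ct T_1)T_2$ is $\tfrac{1}{d+2w_1-2}-\tfrac{1}{h_{12}}=\tfrac{2w_2}{(d+2w_1-2)h_{12}}$, which is matched on the right-hand side only by $-\tfrac{2}{h_{12}}$ times the cross term $-\tfrac{w_2}{d+2w_1-2}(\Delta^\ct T_1)T_2$ above; the same goes for $T_1\Delta^\ct T_2$ and for the $JT_1T_2$ terms. If the tractor-metric contraction really reduced to the middle-slot pairing alone, the stated identity would simply be false. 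This also contradicts your own (correct) observation in the final paragraph that the bottom slots of $\hd T_1$ and $\hd T_2$ must combine with the Laplacian cross term. Once the contraction is computed correctly, the three remaining coefficient checks are one-line algebra in $d,w_1,w_2$ and the proof closes exactly as you outline.
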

 
In principle, all of the following identities can be proved by hand using the above proposition. We have employed   the symbolic algebra system FORM~\cite{Jos} to handle  more intricate cases. 
First we need some simplifying notations. 
\begin{definition}\label{tractor-defs}
Let~$\sigma \in \Gamma(\ce M[1])$. Then, in dimensions $d$ where the right hand sides below are well-posed, we define the following quantities:
\begin{equation*}
\begin{aligned}[c]
I^{\sigma}_A\:  &:= \hd_A \sigma \in \Gamma(\ct M[0]) \, ,\\
P^\sigma_{AB} &:= \hd_A I^\sigma_B \in \Gamma(\odot^2_{\circ} \ct M[-1])\, , \\
\dot{P}^{\sigma}_{AB}  &:= I^\sigma \csdot \hd\, P^\sigma_{AB} \in \Gamma(\odot^2_{\circ} \ct M[-2])\, ,\\
\ddot{P}^{\sigma}_{AB}  &:= I^\sigma  \csdot \hd^2\, P^\sigma_{AB} \in \Gamma(\odot^2_{\circ} \ct M[-3])\, ,
\end{aligned}
\qquad \qquad
\begin{aligned}[c]
K^\sigma_{\rm e} &:= P^\sigma_{AB} P^{\sigma AB} \in \Gamma(\ce M[-2])\, ,\\
\dot{K}^\sigma_{\rm e} &:=  I^\sigma \csdot \hd\, P^\sigma_{AB} P^{\sigma AB} \in \Gamma(\ce M[-3])\, ,\\
\ddot{K}^\sigma_{\rm e} &:= (I^\sigma  \csdot \hd)^2\, P^\sigma_{AB} P^{\sigma AB} \in \Gamma(\ce M[-4])\, ,\\
\dddot{K}^\sigma_{\rm e} &:= (I^\sigma  \csdot \hd)^3 \, P^\sigma_{AB} P^{\sigma AB} \in \Gamma(\ce M[-5])\, .
\end{aligned}
\end{equation*}
\end{definition}

We are particularly interested in the case that $\sigma$ is an asymptotic unit defining density 
determined by a conformal hypersurface embedding $\Sigma\hookrightarrow (M,\cc)$.
In that case we shall often drop the superscript~$\sigma$. Note that the densities $K_{\rm e}$, $\dot{K}_{\rm e}$,...  defined above agree with the definitions for the same given in Section~\ref{NinaSimone}.

Many of the identities that follow only hold when when evaluated along~$\Sigma$. In order to establish equality along~$\Sigma$, some of the results below require the tangential  property of the {\it tangential Thomas-$D$ operator}~$\hd^T$ defined below.
\begin{proposition} \label{Dt-trans}
Let ~$w + \tfrac{d}{2} \neq 1, \tfrac{3}{2},2$, let~$\sigma$ be an asymptotic unit defining density for~$\Sigma \hookrightarrow (M^d, \cc)$, and let~$I = \hd \sigma$. Then, the operator
\begin{align} \label{DT}
\hd^T:=\hd_A - I_A I \csdot \hd +  \frac{X_A}{d+2w-3} I \csdot \hd^2\, ,
\end{align}
mapping $ \Gamma(\ct^\Phi M[w]) \rightarrow \Gamma(\ct M \otimes \ct^\Phi M[w-1])$,
is tangential. Moreover, if~${\mathcal W}^A$ is any operator acting on tractors  of weight~$\frac{1-d}2$ that obeys
~$${\mathcal W}^A\circ X_A=0\, ,$$ then the operator
$$
{\mathcal W}\csdot \hat D^T :=
{\mathcal W}^A\circ \big(\hd_A - I_A I \csdot \hd \big)
$$
is also tangential.
\end{proposition}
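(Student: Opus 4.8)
The plan is to prove both assertions by direct computation, the workhorse being the Leibniz-failure identity of Proposition~\ref{leib-failure} together with the elementary identities among $\sigma$, $I:=\hd\sigma$, $X$ and $\hd$ recorded in Section~\ref{Flack} --- principally $\hd_A\sigma=I_A$, $\hd_A I_B=P_{AB}$, $\hd_A X^B=\delta_A^B$, $X\csdot\hd$ acting as multiplication by $w$ on weight $w$, $X\csdot I=\sigma$ and $X\csdot X=0$ --- and, crucially, the fact that an asymptotic unit defining density obeys $I^2=1+\sigma^d B$, so that $1-I^2$ is divisible by $\sigma$.

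For the first assertion, fix $T\in\Gamma(\ct^\Phi M[w-1])$ and set $h:=d+2w-2$; the hypothesis says exactly that $h\notin\{0,1,2\}$, where $h\neq 0$ is needed for $\hd$ to act on weight $w$, $h\neq 1$ for the coefficient $(d+2w-3)^{-1}=(h-1)^{-1}$ in $\hd^T$ to be finite, and $h\neq 2$ for the second application of Proposition~\ref{leib-failure} used below. One computes $\hd_A(\sigma T)=I_A T+\sigma\hd_A T-\tfrac2h X_A(I\csdot\hd T)$ from Proposition~\ref{leib-failure}; contracting with $I^A$ and using $X\csdot I=\sigma$ gives $I\csdot\hd(\sigma T)=I^2 T+\tfrac{h-2}{h}\sigma(I\csdot\hd T)$; and a second application of Proposition~\ref{leib-failure} to each summand of $\hd_B(\sigma T)$, followed by contraction with $I^A I^B$ and repeated use of $X\csdot I=\sigma$, $\hd_A X_B=h_{AB}$ and $\hd_A I_B=P_{AB}$, gives $I\csdot\hd^2(\sigma T)=\tfrac{2(h-1)}{h}(I\csdot\hd T)+\sigma(\cdots)$. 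Assembling the three terms of $\hd^T_A$ and replacing $I^2$ by $1$ modulo $\sigma$-multiples (this is where $I^2=1+\mathcal O(\sigma^d)$ enters), the $I_A T$ generated by $\hd_A(\sigma T)$ is cancelled by $-I_A(I\csdot\hd(\sigma T))$, while the $-\tfrac2h X_A(I\csdot\hd T)$ generated by $\hd_A(\sigma T)$ is cancelled by $\tfrac{X_A}{h-1}\cdot\tfrac{2(h-1)}{h}(I\csdot\hd T)$ coming from the last term --- which is exactly why the coefficient $(h-1)^{-1}$ appears. Every surviving term carries a factor of $\sigma$, so $\hd^T_A(\sigma T)=\sigma\,S_A$ for a smooth operator $S$, i.e.\ $\hd^T$ is tangential.

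For the ``moreover'', observe that $\mathcal W\csdot\hd^T:=\mathcal W^A\circ(\hd_A-I_A I\csdot\hd)$ necessarily acts on weight-$\tfrac{3-d}{2}$ tractors, so that $\hd_A-I_A I\csdot\hd$ lands in weight $\tfrac{1-d}{2}$; since $\tfrac{3-d}{2}+\tfrac d2=\tfrac32$, this is exactly the value excluded in the first assertion, and at it $h=1$, so the term $\tfrac{X_A}{h-1}(I\csdot\hd^2)$ of $\hd^T$ has an ill-defined coefficient. The point is that $\mathcal W^A$ annihilates precisely this term because $\mathcal W^A\circ X_A=0$; consequently $\mathcal W^A\circ\hd^T_A=\mathcal W\csdot\hd^T$ at every admissible weight, and $\hd_A-I_A I\csdot\hd$ is itself perfectly regular at $h=1$ (still $h\neq 0$, so Proposition~\ref{leib-failure} applies). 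Re-running the computation above at $h=1$ gives $(\hd_A-I_A I\csdot\hd)(\sigma T)=(1-I^2)I_A T+\sigma\big(\hd_A T+I_A(I\csdot\hd T)\big)-2X_A(I\csdot\hd T)$; applying $\mathcal W^A$ kills the final $X_A$-term, $1-I^2=\mathcal O(\sigma^d)$ makes the first term $\sigma$-divisible, and the middle term manifestly so, whence $\mathcal W\csdot\hd^T(\sigma T)$ is divisible by $\sigma$.

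I expect the main obstacle to be the bookkeeping in the second Leibniz-failure step: reducing $I\csdot\hd^2(\sigma T)$ forces one to control $I^A I^B P_{AB}$, $I^A P_{AB}$ and $\hd_A(I^2)$, each of which must be shown $\sigma$-divisible --- via the identity $I^A P_{AB}=\tfrac12\hd_A(I^2)+\tfrac1{d-2}X_A K_{\rm e}$ together with $I^2=1+\mathcal O(\sigma^d)$ --- and one must also note that $\Delta^\ct$ inside $\hd$, acting on a quantity of vanishing order $\sigma^d$, lowers that order only by two, which for $d\geq 3$ still leaves a factor of $\sigma$. A secondary subtlety in the ``moreover'' is that $\mathcal W^A$ must send $\sigma$-divisible tractors to $\sigma$-divisible tractors; for the operators to which this proposition is applied --- compositions of the hypersurface Thomas-$D$ operator with the tractor projection $\top$ --- this is automatic, since their output is a section along $\Sigma$ and $\sigma$-multiples restrict to zero there. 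Finally, one can bypass most of the computation by invoking the $sl(2)$ solution-generating algebra of~\cite{GW}, under which $\hd^T_A$ is the lowest-weight component of $\hd_A$ and therefore commutes with multiplication by $\sigma$ modulo $\sigma$-multiples; this is the conceptual reason for the result.
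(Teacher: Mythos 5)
The paper itself does not prove this proposition---it defers to~\cite{Forms,Will2,BGW}---so there is no in-house argument to compare against; your direct computation supplies one, and it is correct. The two expansions you derive, $I\csdot\hd(\sigma T)=I^2T+\tfrac{h-2}{h}\sigma(I\csdot\hd T)$ and $I\csdot\hd^2(\sigma T)=\tfrac{2(h-1)}{h}I\csdot\hd T+\sigma(\cdots)$, are precisely the $k=1,2$ cases of the third identity of Lemma~\ref{IDcomm}, and the cancellations you exhibit (of $I_AT$ against $-I_AI^2T$ modulo $\mathcal O(\sigma^d)$, and of the two $X_A(I\csdot\hd T)$ terms, which is exactly what fixes the coefficient $(d+2w-3)^{-1}$) constitute the whole content of the first assertion; your treatment of the ``moreover'' at $h=1$, where $\mathcal W^A\circ X_A=0$ exactly removes the term whose coefficient would blow up, is likewise right. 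Three small points. First, your own identity $I^AP_{AB}=\tfrac12\hd_A(I^2)+\tfrac1{d-2}X_AK_{\rm e}$ shows that $I^AP_{AB}$ is \emph{not} $\sigma$-divisible (the $X_AK_{\rm e}$ piece survives on $\Sigma$), contrary to what your ``obstacles'' paragraph asserts; fortunately the main computation never needs it---only $\hd_A(I^2-1)$ and $I\csdot\hd(I^2-1)$ must be $\mathcal O(\sigma)$, which your remark about $\Delta$ lowering the vanishing order by two settles for $d\geq3$. Second, the second Leibniz-failure application, to the product $\sigma\cdot(I\csdot\hd T)$, formally requires $\hd(I\csdot\hd T)$ to exist, i.e.\ $d+2w-6\neq0$, a condition the hypothesis does not impose; this is removable, since only the $\sigma^0$ coefficient $I^2$ of $I\csdot\hd(\sigma V)$ is needed and it can be read off from the scale formula $I\csdot\hd\stackrel g=\nabla_n+w\rho-\tfrac{s}{d+2w-2}(\Delta+wJ)$ (used in the proof of Lemma~\ref{dots}) without differentiating $V$---but a careful write-up should say so. Third, the operators $\mathcal W^A$ to which the ``moreover'' is actually applied are the multiplicative contractions $L^{AC}L_C{}^B$, $F^{AB}$, $W^{AB\,\sharp}$, etc.\ of Theorem~\ref{Big_Formaggio}, for which commuting past $\sigma$ is immediate; the subtlety you flag is real for general $\mathcal W^A$, but its resolution is simpler than the hypersurface-Thomas-$D$ discussion you give. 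None of this affects the correctness of your argument.
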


\begin{proof}
The proof can be found in~\cite{Forms, Will2, BGW}.
\end{proof}


Just as in Riemannian geometry, where the Gauss formula~\nn{GF} relates the ambient connection  to the hypersurface connection,  a similar result relates the  bulk and hypersurface Thomas-$D$ operators; see~\cite{Will2,BGW}.
For this we need a tractor $\Gamma$ that
combines the Fialkow tractor and tractor second fundamental form:
\begin{align}\label{Gamma}
\Gamma_{ABC} := 2 N_{[C} L_{B]A} + 2 X_{[C} F_{B]A} + \tfrac{K}{(d-1)({d}-2)} X_{[C} \bar h_{B]A}.
\end{align}
In these terms we have the following result:
\begin{theorem}[Gau\ss--Thomas formula~\cite{BGW}, Theorem 1.2]\label{thesecondhorcrux}
Let~$w+\frac d2 \neq 1,\frac 32,2$. 
Acting  on weight~$w$ tractors, the bulk tangential and hypersurface Thomas-$D$ operators obey
$$
\hd^T_A\stackrel\Sigma =  \hdb_A +\Gamma_A {}^\sharp - \tfrac{X_A}{d + 2w-3} \bigg{\{} 
2\Gamma_B {}^\sharp \circ \hdb^B + \Gamma^B{}^\sharp \circ \Gamma_B{}^\sharp 
+ \tfrac{1}{(d-1)(d-2)} \left[\left(\hdb_{[\pdot} K \right)  \hh X_{\pdot ]} \right]^\sharp - \tfrac{(3d-1)wK}{2(d-1)(d-2)} \bigg{\}}\, .
$$
\end{theorem}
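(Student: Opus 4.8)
The plan is to prove Theorem~\ref{thesecondhorcrux} by decomposing the bulk Thomas--$D$ operator slot by slot, in a chosen scale $g\in\cc$, into hypersurface data using the tractor analogues of the Gau\ss{} and Mainardi equations, and then recognising the result as $\hdb_A$ together with the $\Gamma$-corrections.

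First I would fix $g\in\cc$ and use the associated three-slot splittings of $\ct M$ and of $\ct\Sigma$, together with the identification $\ct^\perp M|_\Sigma\cong\ct\Sigma$ furnished by the normal tractor $N\stackrel g=(0,\hat n,-H)$. The starting point is the explicit formula $D^A T\stackrel g=\big(w(d{+}2w{-}2)T,\,(d{+}2w{-}2)\nabla^\ct T,\,-\Delta^\ct T-wJT\big)$, so what is needed are normal/tangential decompositions, along $\Sigma$, of (i) the bulk tractor connection $\nabla^\ct_a$ and (ii) the bulk tractor Laplacian $\Delta^\ct$. For (i) the relevant fact is the tractor Gau\ss{} formula: the normal/tangential decomposition of $\nabla^\ct_a$ along $\Sigma$, together with $\nabla^\ct_a N_B$, is governed entirely by the hypersurface tractor connection $\bar\nabla_a$, the tractor second fundamental form $L_{AB}$ of \eqref{LAB}, and the canonical tractors $X$ and $N$. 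For (ii), iterating (i) and applying the Ricci-type identities for $\nabla^\ct$ expresses $\Delta^\ct T|_\Sigma$ in terms of $\bar\Delta^\ct T$, first hypersurface derivatives of $T$ contracted against $L$, and zeroth-order terms built from $L^2$, the Fialkow tractor $F$ of \eqref{FAB}, the rigidity density $K$, and the bulk $W$-tractor; these are precisely the data packaged by \eqref{LAB}, \eqref{FAB} and the Gau\ss--Thomas Equation \eqref{GTE}.

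Substituting these decompositions into the three slots of $D^A T/(d{+}2w{-}2)$ and collecting terms, the ``top'' and ``middle'' slots should reproduce $\hdb_A T$ together with a $\Gamma_A{}^\sharp T$ term, with $\Gamma_{ABC}$ exactly the combination of $L$, $F$ and $K$ in \eqref{Gamma}. The ``bottom'' slot contributes the $X_A$-proportional terms: the denominator $d{+}2w{-}3$ there is the correct Thomas--$D$ normalisation for the output weight $w{-}1$ once one accounts for the extra lowering by $X$, and the quadratic term $\Gamma^B{}^\sharp\!\circ\Gamma_B{}^\sharp$, the $\hdb K$ term, and the term $\tfrac{(3d-1)wK}{2(d-1)(d-2)}$ emerge from the $\Delta^\ct$ contribution and the cross terms between the two $L$-insertions. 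Since this is a finite but lengthy tensor manipulation, I would verify the coefficients with the symbolic algebra system FORM, consistent with the methodology of Section~\ref{Flack}. The excluded weights $w+\tfrac d2\neq1,\tfrac32,2$ are exactly the loci where one of the three relevant Thomas--$D$ normalisations (bulk $\hd$, tangential correction, hypersurface $\hdb$) degenerates.

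Finally one must check that the asserted equality \emph{along} $\Sigma$ is well posed: both $\hd^T_A$ (by Proposition~\ref{Dt-trans}) and $\hdb_A$ are tangential, and the $\Gamma$-dependent right-hand side is assembled from restrictions of tangential operators, so the identity descends to the quotient that defines restriction to $\Sigma$. The main obstacle I expect is precisely assembling the tractor Gau\ss/Mainardi decomposition of $\Delta^\ct$ cleanly enough that the $X_A$-slot terms organise into the stated compact form; getting the $L$-bilinear and $K$-linear pieces there, with their correct rational-in-$d$ coefficients, is the delicate step, which is why I would fall back on FORM to pin them down.
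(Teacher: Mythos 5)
You should first note that the paper does not actually prove this statement: it is imported verbatim as Theorem~1.2 of~\cite{BGW}, so there is no in-paper argument to compare yours against. Judged on its own terms, your outline is the natural route and is broadly consistent with how such Gau\ss--Thomas identities are established in the cited source: fix $g\in\cc$, decompose $\nabla^\ct_a$ and $\Delta^\ct$ along $\Sigma$ via a tractor Gau\ss\ formula governed by $L_{AB}$, $F_{AB}$, $K$ and the $W$-tractor, and match slots against $\hdb_A$ plus the $\Gamma_A{}^\sharp$ corrections. The identification of the excluded weights with degenerating normalisations and the appeal to Proposition~\ref{Dt-trans} for well-posedness along $\Sigma$ are both correct.

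That said, as written this is a proof plan rather than a proof, and the gap is concentrated exactly where the theorem's content lies. The two inputs you invoke --- the normal/tangential decomposition of $\nabla^\ct_a$ along $\Sigma$ in terms of $\bar\nabla$, $L$, $X$, $N$, and the corresponding decomposition of $\Delta^\ct$ --- are themselves nontrivial theorems that you neither state nor derive; without their precise form one cannot even begin to check that the zeroth-order terms assemble into $\Gamma^{B\sharp}\circ\Gamma_B{}^\sharp$, the $\big[(\hdb_{[\pdot}K)X_{\pdot]}\big]^\sharp$ term, and the coefficient $\tfrac{(3d-1)wK}{2(d-1)(d-2)}$, which are the whole point of the statement. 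Relatedly, your computation is organised around the slots of $D^AT$, whereas the theorem concerns $\hd^T_A=\hd_A-I_AI\csdot\hd+\tfrac{X_A}{d+2w-3}I\csdot\hd^2$; the cancellation of the transverse derivatives between $\hd_A$ and the $-I_AI\csdot\hd$ subtraction, and the conversion of the $I\csdot\hd^2$ piece into the $X_A$-proportional bracket, is the delicate step and is only gestured at. Deferring every coefficient to FORM is consistent with the methodology of Section~\ref{prufs}, but until that computation is exhibited (or the relevant lemmas from~\cite{BGW} are quoted), the argument does not yet establish the formula.
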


\medskip

\subsubsection{Commutators}
The simplest  identities we require are  commutators involving the objects in Definition~\ref{tractor-defs} and the Thomas-$D$ operator.

\begin{lemma}\label{Dcomm}
Let~$\sigma$ be any weight~$w=1$ density.
Acting on tractors of weight~$w$ such that~$\hd$ as it appears below is well-defined, the following operator identities hold:
\begin{align*}
\big[\hd_A, \sigma\big] \;\; &= I_A -  \tfrac 2h X_A I\csdot \hd\, ,\\
\big[\hd_A, X_B \big] &= h_{AB} - \tfrac2h X_A \hd_B  \, ,\\
\big[\hd _A, I_B\big] \, &= P_{AB} -\tfrac2{h-2} X_A P_{CB}\hd ^C\, ,
\end{align*}
where~$h := d+2w$.
\end{lemma}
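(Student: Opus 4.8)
The plan is to derive all three commutators directly from the Leibniz failure identity of Proposition~\ref{leib-failure}, together with the explicit coordinate formula~\nn{Dnolog} for the Thomas-$D$ operator and the defining relations $I_A = \hd_A\sigma$, $P_{AB}=\hd_A I_B$ from Definition~\ref{tractor-defs}. The unifying theme is that $\sigma$, $X_B$, and $I_B$ all arise as (or behave like) multiplication-type operators built from $\sigma$, so applying $\hd$ and subtracting the ``naive'' Leibniz terms produces exactly an $X_A$-correction controlled by the anomaly term in Proposition~\ref{leib-failure}.

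For the first identity, I would apply $\hd^A$ to the product $\sigma\cdot T$ for $T\in\Gamma(\ct^\Phi M[w])$ with $w=1$, i.e.\ take $T_1=\sigma$ (weight $1$) and $T_2=T$ (weight $w$) in Proposition~\ref{leib-failure}. Then $h_{12}=d+2w$, and since $\hd^A\sigma = I^A$ and $\hd_B\sigma = I_B$, the proposition gives $\hd^A(\sigma T) - I^A T - \sigma\,\hd^A T = -\tfrac{2}{h}X^A I_B\,\hd^B T$, which rearranges to $[\hd_A,\sigma]T = I_A T - \tfrac2h X_A\,I\csdot\hd\, T$. (One should note the weight on which $\hd$ acts: on the left $\hd_A$ first acts on the weight-$(w+1)$ density $\sigma T$; this is why the stated restriction ``$\hd$ as it appears below is well-defined'' is needed, namely $h\neq 0$ and the shifted weights avoid the bad value $1-\tfrac d2$.)

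For the second and third identities the cleanest route is to treat $X_B$ and $I_B$ analogously. For $[\hd_A, X_B]$, recall $X^A\stackrel g=(0,0,1)$ is the canonical tractor of weight $1$; acting $\hd^A$ on $X_B\, T$ and using $\hd^A X_B = h_{AB}$ (which follows from~\nn{Dnolog} applied to $X$, or is standard) together with $\hd_C X_B = h_{CB}$, Proposition~\ref{leib-failure} with $T_1 = X_B$ (weight $1$), $T_2=T$ (weight $w$) yields $\hd^A(X_B T) - h^A{}_B T - X_B\,\hd^A T = -\tfrac2h X^A h_{CB}\hd^C T = -\tfrac2h X^A\hd_B T$, giving $[\hd_A,X_B] = h_{AB} - \tfrac2h X_A\hd_B$. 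For $[\hd_A, I_B]$, take $T_1 = I_B$ which has weight $0$, so $h_1 = d$ and $h_{12} = d+2w-2 = h-2$; using $\hd^A I_B = P^A{}_B$ and $\hd_C I_B = P_{CB}$, the proposition gives $[\hd_A,I_B] = P_{AB} - \tfrac2{h-2}X_A P_{CB}\hd^C$. The only subtlety is confirming $\hd^A I_B = P_{AB}$ really is symmetric and trace-free as asserted in Definition~\ref{tractor-defs}; this is where one invokes that $I=\hd\sigma$ satisfies the tractor identities from~\cite{Will1} (in particular $X^A I_A = \sigma$ and the consequent structure of $\hd I$).

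The main obstacle is bookkeeping rather than conceptual: one must be careful that Proposition~\ref{leib-failure} is stated for $\hd$ acting on the appropriate weights, and that in each commutator the operator $\hd_A$ on the left-hand side is applied first to the \emph{product}, whose weight differs from that of $T$, so the various $h$-denominators ($h$, $h-2$) are the correct ones and none of the excluded weights is hit. A secondary point to check is that the anomaly term $-\tfrac{2}{h_{12}}X^A(\hd_B T_1)(\hd^B T_2)$ collapses as claimed: for $T_1=\sigma$ it is $X^A I_B\hd^B T$; for $T_1 = X_B$ it simplifies via $h_{CB}\hd^C = \hd_B$; for $T_1 = I_B$ it is $X^A P_{CB}\hd^C T$. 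Once these contractions are written out the three displayed identities follow immediately, so I would present this as a short direct verification invoking Proposition~\ref{leib-failure} three times.
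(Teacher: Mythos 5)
Your proof is correct and follows exactly the route the paper takes: the paper's proof is the one-line assertion that all three identities follow from a direct application of Proposition~\ref{leib-failure} together with Definition~\ref{tractor-defs}, which is precisely what you do, taking $T_1 = \sigma$, $X_B$, and $I_B$ (of weights $1$, $1$, $0$ respectively, giving $h_{12} = h$, $h$, $h-2$) and $T_2 = T$ in the Leibniz-failure identity, then simplifying the anomaly term in each case. The extra detail you supply—verifying $\hd^A X_B = h^A{}_B$ from the explicit formula~\nn{Dnolog}—is a correct and standard fact the paper leaves implicit, and your worry about symmetry of $P_{AB}$ is unnecessary here since only the definition $P_{AB}=\hd_A I_B$ enters this lemma.
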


\begin{proof}
The results follow from a direct application of Proposition~\ref{leib-failure} and Definition~\ref{tractor-defs}.
\end{proof}

\begin{remark}
It is useful to define the operator ${\sf h}$ which, when acting on a weight $w$ tractor $T$, returns the value $h=d+2w$, {\it i.e.}, 
$$
{\sf h} \hh T = h T\, .
$$
Note that rational functions of the operator ${\sf h}$ are defined in the obvious way.
\end{remark}

\begin{lemma} \label{IDcomm}
Let~$\sigma$ be an asymptotic unit defining density. Then,
acting on tractors with weight~$w=:\frac12 (h-d)$ such that~$\hd$ as it appears below is well-defined, the following operator identities hold: %
\begin{align} 
\left[I \csdot \hd, X^A \right] &= I^A - \tfrac{2 \sigma}h\,  \hd^A \, ,\label{IDcommaX} \\[2mm]
\left[ I \csdot \hd, \hd_A \right] &= -P_{AB} \hd^B - I^B \left[\hd_A, \hd_B \right] + \tfrac{2}{h-4} \, X_A P^{BC} \hd_B \hd_C \, ,\label{IDcommD} \\[2mm]
\IdD^k\circ  \sigma &= \tfrac{h-2k}{h}\, \sigma \IdD^k  +\tfrac{k(h-k+1)}{h} \, \IdD^{k-1}+{\mathcal O}(\sigma^{d-k+1})\, , \\[2mm]
\left[I \csdot \hd , I_A \right] &= \tfrac1{d-2}\, {X_A K} - \tfrac{2\sigma}{h-2} P_{AB} \hd ^B +{\mathcal O}(\sigma^{d-2}) \label{ID,I}\, ,\\[2mm]
\left[I \csdot \hd, P^{AB} \right] &= \dot{P}^{AB} - \tfrac{2 \sigma}{h-4}\, \big(\hd^E P^{AB} \big) \hd_E \,.  \label{IDcommaP}
\end{align}
\end{lemma}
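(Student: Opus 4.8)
The plan is to obtain all five identities as mechanical consequences of three inputs: the Leibniz failure identity of Proposition~\ref{leib-failure}, the elementary commutators of Lemma~\ref{Dcomm}, and two structural facts about an asymptotic unit defining density~$\sigma$, namely that $X\csdot I=\sigma$ (immediate from $I_A=\hd_A\sigma$ and formula~\eqref{Dnolog}, and requiring no unit condition) and that $I^2=1+\sigma^d B$, so that $I\csdot I\equiv 1$ modulo $\mathcal O(\sigma^d)$. No restriction ``along~$\Sigma$'' is needed: the identities are exact up to the indicated powers of~$\sigma$. The one piece of genuine care required is weight bookkeeping — whenever $\hd$ passes a product one picks up the correction $-\tfrac{2}{h_{12}}X(\hd T_1)(\hd T_2)$ with $h_{12}=d+2w_1+2w_2-2$, and since $\hd$ itself lowers weight by one, the relevant value of $h=d+2w$ must be tracked at each slot; this is precisely what produces the $h$, $h-2$ and $h-4$ denominators in the statement.

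For \eqref{IDcommaX} I would simply contract Lemma~\ref{Dcomm}'s identity $[\hd_A,X_B]=h_{AB}-\tfrac2h X_A\hd_B$ against~$I^A$, using $X\csdot I=\sigma$; since $X_B$ acts by multiplication, $[I\csdot\hd,X_B]=I^A[\hd_A,X_B]$ and the claim drops out at once. Identity~\eqref{IDcommaP} is equally quick: apply Leibniz failure to the product $P^{AB}\cdot T$ (weights $-1$ and $w$, so $h_{12}=h-4$), contract with~$I^C$, and recognise $I\csdot\hd\,P^{AB}=\dot P^{AB}$ from Definition~\ref{tractor-defs}; here $X\csdot I=\sigma$ is exact, so no error term survives. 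Identity~\eqref{IDcommD} needs one extra step: writing $[I\csdot\hd,\hd_A]=I^C\hd_C\hd_A-\hd_A(I^C\hd_C)$, I expand the second term by Leibniz failure on $I^C\cdot(\hd_C T)$ (weights $0$ and $w-1$, giving $h_{12}=h-4$), use $\hd_A I^C=P_A{}^C$ from Definition~\ref{tractor-defs}, and identify the leftover $I^C[\hd_C,\hd_A]$ as the tractor--curvature term $-I^B[\hd_A,\hd_B]$ in the statement. For the fourth identity the same manipulation — Leibniz failure on $I_A\cdot T$, weights $0$ and $w$, so $h_{12}=h-2$ — reduces everything to evaluating $I^C P_{CA}=I\csdot\hd\,I_A$; this I would compute by applying $\hd_A$ to $I^2=h_{CD}I^CI^D$ via Leibniz failure and contracting, which gives $2\,I^CP_{CA}=\tfrac{2}{d-2}X_A K_{\rm e}+\mathcal O(\sigma^{d-2})$ since $\hd_A 1=0$ and the Laplacian slot of $\hd_A$ applied to $\sigma^d B$ is $\mathcal O(\sigma^{d-2})$; substituting then yields the advertised expression to the stated order, with $K$ read as its canonical extension $K_{\rm e}$.

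The remaining identity, the formula for $\IdD^{\,k}\circ\sigma$, cannot be dispatched by a single application of the above and is where I expect the real effort to go; I would prove it by induction on~$k$. The base case $k=1$ follows from the direct computation $I\csdot\hd(\sigma T)=(I\csdot I)\,T+\tfrac{h-2}{h}\,\sigma\,(I\csdot\hd\,T)$ (Leibniz failure on $\sigma\cdot T$, then contract with~$I$, using $X\csdot I=\sigma$) together with $I\csdot I=1+\mathcal O(\sigma^d)$. For the inductive step I would write $\IdD^{\,k+1}\circ\sigma=\IdD^{\,k}\circ(\IdD\circ\sigma)$, feed in the $k=1$ case, and then invoke the inductive hypothesis one weight lower — i.e. with $h$ replaced by $h-2$ in its coefficients, since there $\sigma$ multiplies a weight $w-1$ object — and collect terms; the coefficient of $\IdD^{\,k-1}$ then collapses to the claimed $\tfrac{(k+1)(h-k)}{h}$ exactly because of the polynomial identity $k(h-1-k)+h=(k+1)(h-k)$. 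The fiddly points here are checking that the error order $\mathcal O(\sigma^{d-k+1})$ degrades by precisely one power of~$\sigma$ under each further application of~$\IdD$, and pinning down the $k$-dependent coefficients; everything else is routine once the weights are handled consistently. Throughout, the more intricate contractions can be cross-checked with the symbolic-algebra computation referred to in the text.
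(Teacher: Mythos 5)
Your proposal is correct and follows essentially the same route as the paper's (very terse) proof: each identity is obtained from the Leibniz failure of Proposition~\ref{leib-failure} together with Definition~\ref{tractor-defs} and Lemma~\ref{Dcomm}, with $I^2=1+\mathcal{O}(\sigma^d)$ entering only in the third and fourth identities, the symmetry $\hd_AI_B=\hd_BI_A$ entering in the fourth, and a simple induction on $k$ handling the third. Your weight bookkeeping ($h$, $h-2$, $h-4$), the coefficient identity $k(h-1-k)+h=(k+1)(h-k)$, and the observation that each application of $I\csdot\hd$ costs one power of $\sigma$ in the error term all check out.
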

\begin{proof}
As in Lemma~\ref{Dcomm}, the lemma follows from Proposition~\ref{leib-failure} and Definition~\ref{tractor-defs}. The third identity requires a simple induction argument. The third and fourth identities also require that $I^2 = 1 + {\mathcal O}(\sigma^d)$. Also, Equation~\nn{ID,I}
uses that $\hat D_A I_B=\hat D_B I_A$.
Note that Equations~(\ref{IDcommaX}),~ (\ref{IDcommD}) and~\nn{IDcommaP} in fact hold for any weight
one density $\sigma$.
\end{proof}

It is also useful to record an identity relating the commutator of two Thomas-$D$ operators and the~$W$-tractor.
\begin{proposition} \label{DD-comm}
In dimension~$d\neq 4$, the commutator of Thomas-$D$ operators is given by
\begin{align*}
\left[D_A, D_B \right] = (d+2w-4)(d+2w-2)W_{AB}{}^\hash +4 X_{[A} W_{B]C}{}^\hash\circ  D^C.
\end{align*} 
\end{proposition}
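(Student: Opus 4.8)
\emph{Proof idea.} The identity is conformally invariant, so it suffices to verify it in an arbitrary scale $g\in\cc$, and the natural route is a direct slot computation. Writing $h:=d+2w$, the definition~\nn{Dnolog} of the Thomas operator reads, in terms of the injectors,
$$
D^A T\stackrel g= w(h-2)\hh Y^A T + (h-2)\hh Z^A{}_a\nabla^{\ct a}T - X^A\big(\Delta^\ct T + wJ\hh T\big)\, ,\qquad T\in\Gamma(\ct^\Phi M[w])\, ,
$$
while the tractor connection acts on the injectors by $\nabla^\ct_a X^B=Z^B_a$, $\nabla^\ct_a Z^B_b=-P_{ab}X^B-\bg_{ab}Y^B$, and $\nabla^\ct_a Y^B=P_a{}^bZ^B_b$, all immediate from the formula for $\nabla^\ct$ recorded in Section~\ref{Bground}. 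First I would expand $D^A(D^BT)$, being careful that $D^BT$ has weight $w-1$ --- so the outer operator carries the factor $h-4$ and the inner one $h-2$, already foreshadowing the prefactor $(h-4)(h-2)$ --- and that it carries the extra tractor index $B$ together with the $\Phi$-indices. Pushing all tractor derivatives onto the slots of $T$ reduces everything to $\nabla^\ct$, $\Delta^\ct$, the tractor curvature $[\nabla^\ct_a,\nabla^\ct_b]=\Omega_{ab}{}^\hash$, and its first covariant derivative.

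Antisymmetrizing in $A,B$ is where the top-order terms die: the fourth-order contribution $X^AX^B(\Delta^\ct)^2T$ is manifestly symmetric, and a short check disposes of the third-order contributions as well. This is at once a consistency check and the organizing principle of the computation, for the claimed right-hand side has order at most one: $W_{AB}{}^\hash T$ is algebraic, and since $X^CW_{ABCD}=0$ the second-order piece $X^C(\!-\Delta^\ct)$ of $D^C$ drops out of $X_{[A}W_{B]C}{}^\hash D^C$, leaving only its $Y^C$- and $Z^C_c$-slot contributions. What survives the cancellations is a curvature expression: commuting two tractor connections yields $\Omega_{ab}{}^\hash$, whose slot form is built from the Weyl tensor $W_{abcd}$ and the Cotton tensor $C_{abc}$; commuting a tractor connection past $\Delta^\ct$ produces a single covariant derivative of $\Omega$, which via the contracted Bianchi identities --- $\nabla^a W_{abcd}$ expressed through the Cotton tensor, and $\nabla^a C_{abc}$ through the Bach tensor $B_{ab}$ --- brings in $B_{ab}$.

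It then remains to recognize this combination of $W_{abcd}$, $C_{abc}$ and $B_{ab}$, with its $w$- and $d$-dependent coefficients, as precisely $(h-4)(h-2)\hh W_{AB}{}^\hash T + 4\hh X_{[A}W_{B]C}{}^\hash D^C T$. For this one feeds in the explicit slot formula for the $W$-tractor from Section~\ref{conbed}, the vanishing $X^CW_{ABCD}=0$, and the slot expansion of $D^C$ above, so that $X_{[A}W_{B]C}{}^\hash D^C$ unfolds into a Weyl-type piece paired with $Z^C_c\nabla^{\ct c}T$ plus a Cotton/Bach-type piece paired with $w(h-2)T$. The Bach contributions coming from the two terms on the right-hand side must then sum to the Bach term generated by the commutator; matching them against the $\tfrac{4}{d-4}B_{ac}$ normalization built into the $W$-tractor fixes every remaining coefficient, and is exactly where the hypothesis $d\neq4$ enters (in $d=4$ the $W$-tractor degenerates and the Bach tensor is itself conformally invariant, so this organization is unavailable). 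A good way to cross-check the final constants is to specialize to densities (trivial $\Phi$) and, separately, to contract the free index with $X^A$, in which cases both sides collapse to much shorter expressions.

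The step I expect to be the main obstacle is the middle one: composing a second-order operator with a second-order operator generates a great many slot terms, and nailing the coefficient of the Bach term requires the exact contracted Bianchi identity for the Cotton tensor in concert with the $(d-4)$-normalization of the $W$-tractor. In practice this is most safely handled with the symbolic-algebra implementation (FORM) already used elsewhere in Section~\ref{prufs}; the structural shape of the answer is nonetheless forced once the top-order cancellations are in hand. An alternative, more conceptual derivation runs through the ambient metric, where $D^A$ corresponds to the ambient covariant derivative and the ambient curvature restricts to the $W$-tractor, the term $X_{[A}W_{B]C}{}^\hash D^C$ accounting for the Euler-field correction that distinguishes $D$ from $\tilde\nabla$.
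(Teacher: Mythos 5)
The paper delegates this statement entirely to the references \cite{BGW,GOmin,GOadv}, so there is no in-house argument to compare against; what you describe is precisely the direct slot computation one finds there (GOadv and the appendix of GOmin), and the outline is sound. Your structural observations are the right ones: the $(d+2w-4)(d+2w-2)$ prefactor tracks the weight drop between the inner and outer $D$; the top-order $X_AX_B(\Delta^\ct)^2$ piece is manifestly symmetric; $X^CW_{ABCD}=0$ removes the bottom slot of $D^C$ so that the right-hand side is first order, as it must be; and the entry of the Cotton and Bach tensors through the tractor curvature $\Omega_{ab}{}^\hash$ and its divergence, in concert with the $\tfrac{4}{d-4}$ normalization inside the $W$-tractor, is exactly where the hypothesis $d\neq4$ is used. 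Your ambient-metric remark is also an accurate description of the more conceptual route.

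One point you should not leave at the level of assertion: you say ``a short check disposes of the third-order contributions as well,'' but this is the crux of the computation rather than a preliminary. These terms do not simply vanish upon antisymmetrization; the difference $\nabla^\ct\Delta^\ct-\Delta^\ct\nabla^\ct$ in the $X_{[A}Z_{B]}$-slots produces $\nabla^\ct\Omega$-type terms that must then be fed through the contracted Bianchi identities before the residual derivative orders $\geq2$ genuinely cancel and the surviving order-$\leq1$ expression can be recognized as $(d+2w-4)(d+2w-2)W_{AB}{}^\hash+4X_{[A}W_{B]C}{}^\hash D^C$. In other words, the ``organizing principle'' you appeal to is really a bookkeeping target for a delicate cancellation, not an immediate symmetry. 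This is consistent with your own closing comment that the middle step is the main obstacle and is most safely handled by symbolic algebra, so I'd phrase the intermediate claim more cautiously; otherwise the proposal is a correct blueprint.
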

\begin{proof}
The proof is given in~\cite{BGW,GOmin,GOadv}.
\end{proof}

The above proposition inspires one more useful commutator result:
\begin{lemma} \label{DWcomm}
Let~$\sigma$ be an asymptotic unit defining density and let~$T^{\Phi AB} \in \Gamma(\ct^\Phi M \otimes \wedge^2 \ct M[w'])$ such that~$T^{\Phi \sharp}$ denotes the tractor-valued tractor-endomorphism~$T^{\Phi \sharp} V^A \mapsto T^{\Phi A}{}_B V^B$, that extends by Leibniz to act on general tractor tensors. Then, acting on a tractor of weight~$w$ such that~$\hd$ as it appears below is well-defined, the following operator identities hold:
\begin{align*}
\big[\hd_A, T^{\Phi \sharp} \big] \hh &= (\hd_A T^{\Phi \sharp}) - T^\Phi{}_A{}^B \hd_B - \tfrac{2}{h+2w'-2} X_A (\hd_C T^{\Phi \sharp}) \circ \hd^C + \tfrac{2}{h+2w'-2} X_A (\hd_C T^{\Phi C}{}_D) \hd^D\,, \\
\big[I \csdot \hd, T^{\Phi \sharp} \big] &= (I \csdot \hd T^{\Phi \sharp}) - \tfrac{2 \sigma }{h+2w'-2} (\hd_C T^{\Phi \sharp}) \circ \hd^C + \tfrac{2 \sigma}{h+2w'-2} (\hd_C T^{\Phi C}{}_D) \hd^D\,,
\end{align*}
where~$h:= d+2w$.
\end{lemma}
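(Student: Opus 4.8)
The plan is to derive both identities in Lemma~\ref{DWcomm} by reducing them to the Leibniz failure Proposition~\ref{leib-failure} together with the basic commutators already established in Lemma~\ref{Dcomm} and Lemma~\ref{IDcomm}. The key observation is that the endomorphism action $T^{\Phi\sharp}$ on a tractor tensor $V$ is, by definition, built from contractions of $T^{\Phi A}{}_B$ against the slots of $V$; so acting with $\hd_A$ on $T^{\Phi\sharp}V$ and using the failure of the Leibniz rule slot-by-slot should produce exactly the three correction terms appearing on the right-hand side. The second identity will then follow from the first by contracting with $I^A$ and using $I\csdot\hd=\hd_A I^A$ modulo the commutator $[I\csdot\hd, \cdot]$ supplied by Lemma~\ref{IDcomm}, keeping careful track of the ${\mathcal O}(\sigma)$ terms that survive because $\sigma$ is only an \emph{asymptotic} unit defining density.

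Concretely, first I would treat the case where $T^{\Phi AB}$ has no extra $\Phi$-indices, i.e. $T^{AB}\in\Gamma(\wedge^2\ct M[w'])$ acting as a plain endomorphism $T^\sharp V^C = T^C{}_D V^D$ on a single tractor $V$ of weight $w$. Writing $T^\sharp V^C = T^C{}_D V^D = h^{CE} T_{ED} V^D$ and applying Proposition~\ref{leib-failure} to the product $T_{ED} V^D$ (whose total weight is $w+w'$, so $h_{12}=d+2(w+w')-2 = h+2w'-2$), one gets
\begin{align*}
\hd_A(T^\sharp V^C) = (\hd_A T^C{}_D) V^D + T^C{}_D (\hd_A V^D) - \tfrac{2}{h+2w'-2} X_A (\hd_B T^C{}_D)(\hd^B V^D)\,.
\end{align*}
Here the middle term is $T^\sharp(\hd_A V^C)$ plus the contribution where the free index $C$ is itself the endomorphism target; rearranging and recognizing that $(\hd_B T^\sharp)\circ\hd^B$ acts on all slots while $(\hd_B T^{C}{}_D)\hd^D$ only touches the endomorphism slot yields precisely the stated formula once one extends by the Leibniz rule of the endomorphism action to tractors $V$ carrying several indices (each slot contributes one copy, and the $X_A$-corrections assemble into the two displayed terms). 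The second identity is obtained by contracting the first with $I^A$: the term $I^A(\hd_A T^\sharp)$ becomes $(I\csdot\hd\, T^\sharp)$, the term $I^A T^\Phi{}_A{}^B\hd_B$ vanishes because... well, it does not vanish a priori, so the cleaner route is to instead write $[I\csdot\hd, T^\sharp] = I^A[\hd_A,T^\sharp] + [I^A,T^\sharp]\hd_A$ and note $[I^A,T^\sharp]=0$ since $I^A$ is just a tractor (multiplication commutes with endomorphism action), then feed in the first identity and observe $I^A X_A = \sigma\,(\text{something})$; more precisely $X_A I^A$ annihilates after using $X\csdot\hd\sigma$-type relations, leaving the $\tfrac{2\sigma}{h+2w'-2}$ coefficients.

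The main obstacle I anticipate is bookkeeping rather than conceptual: correctly tracking which of the $X_A$-correction terms in Proposition~\ref{leib-failure} survive when $T^{\Phi AB}$ has the extra $\Phi$-indices (so the Leibniz failure must be applied iteratively, once per slot, and cross-terms between distinct slots could in principle appear), and verifying that the weight bookkeeping parameter $h+2w'-2$ is the correct one in every instance. A secondary subtlety is that for the $I\csdot\hd$ identity the density $\sigma$ enters through $I^A X_A$, and since $I^2 = 1+{\mathcal O}(\sigma^d)$ only asymptotically, one must confirm that no further ${\mathcal O}(\sigma^{d-\bullet})$ error terms are generated beyond the explicit $\sigma$-linear correction — this should follow because the correction terms already carry an explicit $X_A$ which, contracted into $I^A = \hd^A\sigma$, produces exactly one power of $\sigma$ with no hidden lower-order pieces, but it warrants a careful check. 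As in the preceding lemmas, the bulk of the verification can be delegated to FORM~\cite{Jos}, with the hand computation serving to fix conventions and confirm the structure.
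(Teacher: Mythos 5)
Your overall strategy matches the paper's (which simply asserts that the lemma is ``an elementary application'' of the Leibniz failure while ``accounting for the action of~$T^{\Phi\hash}$''). You correctly identify the weight parameter $h_{12}=h+2w'-2$, you correctly parse the two $X_A$-correction terms (one where $(\hd_C T^{\Phi\sharp})$ hits all slots including the $\hd^C$-slot, one that removes the $\hd^C$-slot contribution), and you correctly note that contracting $I^A X_A=\sigma$ produces the $\sigma$-linear coefficients in the second identity with no hidden error. For the first identity your bookkeeping is essentially right.

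However, there is a concrete gap in your derivation of the second identity: the claim that $[I^A,T^{\Phi\sharp}]=0$ is false. Because $T^{\Phi\sharp}$ extends by Leibniz to act on \emph{every} tractor slot, applying it to $I^A V$ also hits the new free index $A$, so
$$T^{\Phi\sharp}(I^A V)=T^{\Phi A}{}_B\hh I^B V+I^A\hh T^{\Phi\sharp}V\,,\qquad\text{hence}\qquad [I^A,T^{\Phi\sharp}]=-T^{\Phi A}{}_B\hh I^B\neq 0\,.$$
(Your heuristic ``multiplication commutes with endomorphism action'' only holds for multiplication by scalars; $I^A$ carries a free tractor index.) This is not merely a bookkeeping correction you can ignore: when you feed the first identity into $I^A[\hd_A,T^{\Phi\sharp}]$, you are left with the term $-I^A T^\Phi{}_A{}^B\hd_B$ which, as you yourself acknowledged, does not vanish for a general $T$ (for instance it is nonzero for the $W$-tractor). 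It is precisely $[I^A,T^{\Phi\sharp}]\hd_A=-T^{\Phi A}{}_B I^B\hd_A=I^A T^\Phi{}_A{}^B\hd_B$ (using antisymmetry of $T^{\Phi AB}$) that cancels it. With $[I^A,T^{\Phi\sharp}]=0$, your argument would yield the second identity with a spurious extra term, so the intermediate claim must be replaced by the correct commutator before the derivation closes.
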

\begin{proof}
This operator identity is an elementary application of Lemma~\ref{leib-failure} while accounting for the action of~$T^{\Phi\hash}$.
\end{proof}

%

\subsubsection{Operator identities along~$\Sigma$}
Here we provide a list of  operator identities valid along~$\Sigma$. These identities were proved using Definition~\ref{tractor-defs}, Lemmas~\ref{Dcomm} and~\ref{IDcomm}, and 
the computer algebra system FORM. 
The next lemma shows how to convert various operators involving Thomas-$D$ operators to combinations of their tangential parts and Laplace--Robin operators. 
\allowdisplaybreaks

\begin{lemma} \label{P-operator-ids}
Acting on tractors of weight~$w=:\frac12 (h-d)$, and such that~$\hd$ and $\hd^T$ as they appear below are well-defined,
the following operator identities hold:
\begin{align}
P^{AB} \circ \hd_A \circ \hd_B &\eqSig P^{AB} \hd^T_A \hd^T_B + \tfrac{h-4}{d-2} K I \csdot \hd \, ,\\[4mm] 
P^{AB} \circ \hd_A \circ P_B^C \circ \hd_C &\eqSig P^A_C P^{CB} \hd^T_A  \hd^T_B + P_{AB} (\hd^A P^{BC}) \hd^T_C \, ,\\[4mm] 
P^A_C P^{CB} \circ \hd_A \circ \hd_B &\eqSig -\tfrac{1}{h-3} K I \csdot \hd^2 + P^3 I \csdot \hd + P^A_C P^{CB} \hd^T_A \hd^T_B \, ,\\[4mm] 
(\hd^A K) \circ \hd_A &\eqSig \tfrac{2}{h-3} K I \csdot \hd^2 + \Kd I \csdot \hd + (\hd^A K) \hd^T_A \, ,\\[4mm] 
\begin{split}
I_A (\hd^C P^{AB}) \circ \hd_C \circ \hd_B &\eqSig \tfrac{2h - d - 6}{(d-2)(h-3)} K I \csdot \hd^2 + \tfrac{(d-6)(h-d-2)}{2(d-2)(d-4)} \Kd I \csdot \hd\\[2mm]
&\quad - \tfrac{h-6}{d-4} P^3 I \csdot \hd - P^A_C P^{CB} \hd^T_A  \hd^T_B \\[2mm]
&\quad+ \tfrac{h-d-2}{d-4} P_{AB} (\hd^A P^{BC}) \hd^T_C + \tfrac{(d-6)(h-d-2)}{2(d-2)(d-4)}
(\hd^A K) \hd^T_A\, ,
\end{split} \\[4mm] 
\begin{split}
I_A \ddot{P}^{AB} \circ \hd_B &\eqSig \tfrac{2}{(d-2)(h-3)} K I \csdot \hd^2 + \tfrac{2(d-5)}{(d-2)(d-4)} \Kd I \csdot \hd - \tfrac{2}{d-4} P^3 I \csdot \hd \\[2mm]
&\quad + \tfrac{2}{d-4} P_{AB} (\hd^A P^{BC}) \hd^T_C - \tfrac{2}{(d-2)(d-4)} (\hd^A K) \hd^T_A \\[2mm]
&\quad + \tfrac{h-d}{(d-2)^2} K^2 + \tfrac{h-d}{2(d-2)} \Kdd\, ,
\end{split}\\[4mm]
\begin{split}
(\hd_A \dot{P}^{AB}) \circ \hd_B &\eqSig \tfrac{d-2}{(d-6)(h-3)} K I \csdot \hd^2 + \tfrac{d-4}{2(d-6)} \Kd I \csdot \hd \\[2mm]
&\quad  - \tfrac{2}{d-6} P^3 I \csdot \hd + (\hd_A \dot{P}^{AB}) \hd^T_B\, ,
\end{split}\\[4mm]
\begin{split}
\hd_A \circ \dot{P}^{AB} \circ \hd_B &\eqSig \tfrac{(d+2)h - 8d}{(d-2)(h-3)(h-8)} K I \csdot \hd^2 + \tfrac{(d^2 - 4d-4)h - 8d^2 + 44d - 24}{2(d-2)(d-4)(h-8)} \Kd I \csdot \hd \\[2mm]
&\quad - \tfrac{2(h-6)}{(d-4)(h-8)} P^3 I \csdot \hd  + \tfrac{h-6}{h-8} \dot{P}^{AB} \hd_A \hd_B \\[2mm]
&\quad + \tfrac{(d-6)(h-d-2)}{(d-2)(d-4)(h-8)} (\hd^A K) \hd^T_A + \tfrac{(d-6)(h-6)}{(d-4)(h-8)} (\hd_A \dot{P}^{AB})\hd^T_B\,. 
\end{split}
\end{align}
\end{lemma}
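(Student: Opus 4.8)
The plan is to prove each identity by \emph{normal-ordering}: I would rewrite every bare Thomas--$D$ operator $\hd_A$ in terms of the tangential operator $\hd^T_A$ and the two Laplace--Robin operators $I\csdot\hd$ and $I\csdot\hd^2$, using the defining Equation~\nn{DT}. Solving that equation for $\hd_A$ gives, acting on weight $w$ tractors,
$$
\hd_A = \hd^T_A + I_A\,(I\csdot\hd) - \tfrac{X_A}{d+2w-3}\,(I\csdot\hd^2)\,,
$$
and substituting this into the left-hand side of a given identity produces a sum of terms built from $\hd^T$, $I\csdot\hd$, $I\csdot\hd^2$ and the tractors $X_A$, $I_A$, $P_{AB}$, $K$ in various orders. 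I would then repeatedly apply the commutators of Lemmas~\ref{Dcomm} and~\ref{IDcomm}, together with Definition~\ref{tractor-defs}, to move the $\hd^T$ factors to the right (matching the stated form of the right-hand sides) and to collect the curvature scalars $P^3$, $K$, $\Kd$, $\Kdd$, $K^2$ that the commutators generate. Since Lemmas~\ref{Dcomm} and~\ref{IDcomm} themselves reduce to Proposition~\ref{leib-failure}, the whole computation is a bookkeeping exercise in the Leibniz-failure calculus.

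Two observations make the passage to $\Sigma$ legitimate. First, each identity is asserted only along $\Sigma$, so every manipulation may be performed modulo terms $\sigma^j A$ that vanish there; in particular the identities containing $\ddot P^{AB}$, $(\hd_A\dot P^{AB})$ and the higher objects would invoke $I^2 = 1 + \mathcal O(\sigma^d)$, exactly as in the proof of Lemma~\ref{IDcomm}. Second, by Proposition~\ref{Dt-trans} the operator $\hd^T$ is tangential, hence so is every operator appearing on the two sides of these identities -- each being assembled from $\hd^T$, $I\csdot\hd$ and $I\csdot\hd^2$ -- so restriction to $\Sigma$ is well-defined and the equalities $\eqSig$ are meaningful. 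I would establish the identities in the order listed: the first few follow from a single round of commutation, while the later ones, with two bare derivatives flanking $\dot P^{AB}$, are obtained by substituting the earlier identities once the outer derivatives have been normal-ordered. Because all coefficients produced are rational functions of $h=d+2w$, each identity, once verified at generic weight, holds at every weight for which the displayed denominators do not vanish.

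The hard part will be combinatorial rather than conceptual. Each commutation step shifts the weight of the tractor it acts on, so the parameter $h=d+2w$ -- which appears in essentially every coefficient -- changes from line to line and must be tracked with care; moreover the $X_A$-terms supplied by Equation~\nn{DT} and by the commutators $[\hd_A,X_B]$ and $[I\csdot\hd,X^A]$ feed back into further $I\csdot\hd^2$ and $\hd^T$ pieces, so a naive expansion generates many more terms than survive in the compact right-hand sides. Controlling this proliferation by hand is error-prone, so I would carry out the more intricate cases -- the last several identities in particular -- in the symbolic algebra system FORM~\cite{Jos}, implementing Proposition~\ref{leib-failure}, Definition~\ref{tractor-defs} and Lemmas~\ref{Dcomm} and~\ref{IDcomm} as rewriting rules, while checking the first identities by hand as a consistency test.
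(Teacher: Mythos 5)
Your proposal is correct and is essentially the paper's own method: the authors state that these identities "were proved using Definition~\ref{tractor-defs}, Lemmas~\ref{Dcomm} and~\ref{IDcomm}, and the computer algebra system FORM," and the printed proof simply points to the FORM files that implement exactly the normal-ordering/Leibniz-failure bookkeeping you describe (including the correct inversion of Equation~\nn{DT} and the use of $I^2=1+\mathcal O(\sigma^d)$ along $\Sigma$).
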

\begin{proof}
These identities  result from a series of symbolic algebra calculations using FORM. The files performing these calculations are documented in~\cite{FormFiles}.
\end{proof}

In addition to Definition~\ref{tractor-defs} and Lemmas~\ref{Dcomm} and~\ref{IDcomm}, to establish the following lemma for operators involving the Laplace--Robin operator, we also need to use Lemma~\ref{P-operator-ids}. 
\begin{lemma} \label{TIDU}
Acting on tractors of weight~$w=:\frac12 (h-d)$, and such 
that~$\hd$ and $\hd^T$ as they appear below are well-defined,
the following operator identities hold:
\begin{align} 
\phantom{help}&\nonumber\\
X^A  \circ I\csdot \hd \circ X_A &\eqSig   0 \label{X1X} \, ,\\[2mm] 
X^A  \circ I\csdot \hd^2 \circ X_A &\eqSig   -\tfrac{h-d}h \label{X2X} \, ,\\ 
X^A \circ I\csdot \hd^3 \circ X_A &\eqSig -  \tfrac{3(h-d-2)}{h} I\csdot\hd  \label{X3X} \, ,\\[2mm]
X^A \circ I\csdot \hd^4 \circ X_A &\eqSig - \tfrac{6(h-d-4)}{h} I\csdot \hd^2
-  \tfrac{2h^3 - (2d+20)h^2 + (16d+56)h -48d}{(d-2)h(h-2)(h-4)}\,  K 
\, ,\label{X4X}  \\[4mm]
X^A \circ I\csdot \hd \circ I_A &\eqSig 0 
\label{X1I} \, ,\\[2mm] 
X^A \circ I\csdot \hd^2 \circ I_A &\eqSig 0 \label{X2I} \, ,\\[2mm] 
X^A \circ I\csdot \hd^3 \circ I_A &\eqSig  \tfrac{h^2 -(d+4)h + 8d-8}{(d-2)(h-2)(h-4)} K \label{X3I} \, ,\\[2mm] 
\begin{split}
X^A \circ I\csdot \hd^4 \circ I_A &\eqSig\, 
    \tfrac{4(h-5)(h^2 - (d+8)h + 10d +4)}{(d-2)(h-2)(h-4)(h-6)}\, K \, I\csdot \hd  - 
\tfrac{12}{(h-2)(h-4)} P^{AB} \hd^T_A \hd^T_B  \\& \quad + \tfrac{3h^2 - (3d+12)h + 30d-36}{(d-2)(h-2)(h-4)} \dot{K}\, ,
\end{split} \label{X4I} \\[2mm] 
I^A\circ  I\csdot \hd \circ X_A &\eqSig 1 \label{I1X} \, ,\\[2mm] 
I^A \circ  I\csdot \hd^2 \circ X_A &\eqSig  \tfrac{2(h-1)}{h}  I\csdot \hd \label{I2X} \, ,\\[2mm] 
I^A  \circ  I\csdot \hd^3 \circ X_A &\eqSig   \tfrac{3(h-2)}{h} \, I\csdot \hd^2 +  \tfrac{2h^2 -(d+8)h+6d}{h(d-2)(h-2)}\,  K \label{I3X} \, ,\\[2mm] 
\begin{split}
I^A \circ I \csdot \hd^4 \circ X_A &\eqSig \tfrac{4(h-3)}{h} I \csdot \hd^3 - \tfrac{4(h-3)(-2h^2 + (d+16)h - 8d-24)}{(d-2)h(h-2)(h-4)} K I \csdot \hd \\[2mm]
&\quad - \tfrac{12}{h(h-2)} P^{AB} \hd^T_A \hd^T_B + \tfrac{3h^3 - (d+28)h^2 + (14d + 68)h - 48d}{(d-2)h(h-2)(h-4)}\dot K\, ,
\end{split} \label{I4X}\\[4mm] 
X^A \circ I \csdot \hd \circ  \hd_A &\eqSig \tfrac{h-d-2}{2} I \csdot \hd \label{X1D} \, ,\\[2mm] 
X^A \circ I \csdot \hd^2 \circ  \hd_A &\eqSig \tfrac{h-d-4}{2} I \csdot \hd^2 + \tfrac{h-d}{2(d-2)}K \label{X2D} \, ,\\[2mm] 
\begin{split}
X^A \circ I \csdot \hd^3 \circ \hd_A &\eqSig \phantom{+} \tfrac{h-d-6}{2} I \csdot \hd^3- \tfrac{2(2h-d-10)}{(h-4)(h-6)}  P^{AB} \hd^T_A \hd^T_B   \\& \quad + \tfrac{3h^2 - (3d+28)h + 22d+52}{2(d-2)(h-6)} K I \csdot \hd  + \tfrac{h-d}{d-2} \dot{K} + \tfrac{2}{h-4} I^A \hd^B [\hd_A, \hd_B]\, ,
\end{split} \label{X3D} \\[2mm]
\begin{split}
X^A \circ I \csdot \hd^4 \circ   \hd_A  &\eqSig \,\,  \tfrac{h-d-8}{2} I \csdot \hd^4 \\
& \quad+  \resizebox{0.6 \textwidth}{!} {$\tfrac{3h^5 - (3d+83)h^4 + (71d+908)h^3 - (608d+4932)h^2 + (6d^2+2244d+13224)h - 48d^2 - 3024d-13536}{(d-2)(h-3)(h-4)(h-6)(h-8)}~$}K I \csdot \hd^2 \\
& \quad+\tfrac{h-d}{2(d-2)^2} K^2  
+\tfrac{3(h-d)}{2(d-2)}\ddot{K}\\ 
& \quad+ \tfrac{4h^3 -(4d+61)h^2 + (58d+246)h - 210d-108}{(d-2)(h-6)(h-8)}  \dot{K} I \csdot \hd  \\
& \quad+ \tfrac{6}{h-4} I^A \hd^B I \csdot \hd [\hd_A, \hd_B]
+ \tfrac{2}{h-6} I^A \hd^B [\hd_A, \hd_B] I \csdot \hd \\
& \quad- \tfrac{4(h-7)}{(h-4)(h-6)} I^A [\hd_A, \hd_B] I_C [\hd^B, \hd^C] \\
& \quad+ \tfrac{2(10h^2 - (3d+130)h + 24d + 408)}{(h-4)(h-6)(h-8)} P^{AB} I^C \hd_A [\hd_B, \hd_C] \\
& \quad+\tfrac{2(5h^2 - (3d+74)h + 24d + 264)}{(h-4)(h-6)(h-8)} \Big[ P^3 I \csdot \hd + P^{AB} I^C   [\hd_A, \hd_C] \left( \hd^T_B + I_B I \csdot \hd  \right) \Big] \\
& \quad-  \tfrac{4(4h^2 -(2d+53)h +14d+172}{(h-4)(h-6)(h-8)} P^{AB} \hd^T_A \hd^T_B I \csdot \hd \\
& \quad+ \tfrac{2(13h^2 -(6d+184)h+48d+624)}{(h-4)(h-6)(h-8)}  P^A_C P^{CB} \hd^T_A \hd^T_B  \\
& \quad+ \tfrac{2((8d-30)h^2 - (3d^2+98d-420)h + 24d^2 + 264d - 1392)}{(d-4)(h-4)(h-6)(h-8)}  P^{AB} \left(\hd_A P_B^C \right) \hd^T_C  \\
& \quad- \tfrac{3(h-d)}{(d-2)(h-6)} \left(\hd^A K \right) \hd^T_A \\
& \quad-\tfrac{6(2h-d-12)}{(h-4)(h-6)}  \dot{P}^{AB} \hd_A \hd_B \\
& \quad - \tfrac{2(d-6)}{(d-4)(h-8)} \left(\hd_A \dot{P}^{AB} \right) \hd^T_B\, ,
\end{split} \label{X4D} \\[4mm]
\hd_A \circ I \csdot \hd^2 \circ X^A &\eqSig \tfrac{(h+d)(h+2)}{2h} I \csdot \hd^2  + \tfrac{h^3 +(d-6)h^2 -2dh+8d}{2(d-2)h(h-4)} K  \, ,\label{D2X} \\[4mm]
\begin{split}
\hd_A \circ I \csdot \hd^2\circ   \hd^A &\eqSig -  \tfrac{h-6}{2(h-8)}  \dot{K} I \csdot \hd \\
& \quad +  \tfrac{2h^3 - 32h^2 + (d+160)h-8d-236}{(h-3)(h-6)(h-8)} K I \csdot \hd^2 \\
& \quad + I^A \hd^B \left(I \csdot \hd [\hd_A, \hd_B] + [\hd_A, \hd_B] I \csdot \hd \right) \\
& \quad - \tfrac{d-2}{h-6} P^{AB} I^C \hd_A [\hd_B, \hd_C] \\
& \quad- \tfrac{h^2+(d-14)h -8d+52}{(h-6)(h-8)}  \left(P^3 I \csdot \hd + P^{AB} I^C [\hd_A, \hd_C] \hd_B \right) \\
& \quad+   \tfrac{2(d-2)(h-7)}{(h-6)(h-8)}  P^{AB} \hd^T_A \hd^T_B I \csdot \hd  \\
& \quad- \tfrac{h^2 + (2d-14)h - 16d+56}{(h-6)(h-8)}P^A_C P^{CB} \hd^T_A \hd^T_B  \\
& \quad+ \tfrac{2h^2 - (d^2-4d+24)h +8d^2 -36d+88}{(d-4)(h-6)(h-8)} P^{AB} \left(\hd_A P_B^C \right) \hd^T_C  \\
& \quad+   \tfrac{d-2}{h-6} \dot{P}^{AB} \hd_A \hd_B \\
& \quad- \tfrac{(d-6)(h-6)}{(d-4)(h-8)}  \left(\hd_A \dot{P}^{AB} \right) \hd^T_B \,.
\end{split}  \label{D2D}
\end{align}
\end{lemma}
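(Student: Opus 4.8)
The plan is to verify each identity by applying it to an arbitrary section $T\in\Gamma(\ct^\Phi M[w])$ and reducing the left-hand side to the right using only the commutator relations already assembled, together with the algebraic facts $X^AX_A=0$, $I^AX_A=X^AI_A=\sigma$ (so any term carrying an explicit factor of $\sigma$ vanishes along $\Sigma$), $X^A\hd_AT=wT$, $I^2=1+{\mathcal O}(\sigma^d)$, and the symmetry $\hd_AI_B=\hd_BI_A$. The moves are: push $\hd_A$ past $\sigma$, $X_B$, $I_B$ using Lemma~\ref{Dcomm}; push $I\csdot\hd$ past $X^A$, $\hd_A$, $I_A$, $P^{AB}$ using Lemma~\ref{IDcomm}, in particular using $(I\csdot\hd)^k\circ\sigma=\tfrac{h-2k}{h}\,\sigma(I\csdot\hd)^k+\tfrac{k(h-k+1)}{h}(I\csdot\hd)^{k-1}+{\mathcal O}(\sigma^{d-k+1})$ to absorb the factors of $\sigma$ that get generated; and, whenever a commutator $[\hd_A,\hd_B]$ appears, rewriting it through the $W$-tractor via Proposition~\ref{DD-comm} — this is the source of the explicit $[\hd_A,\hd_B]$ terms displayed in~\nn{X3D},~\nn{X4D} and~\nn{D2D}. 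As soon as a product of two or more free $\hd$'s contracted into $P^{AB}$, $\dot P^{AB}$, $\ddot P^{AB}$, $(\hd^AK)$ or $(\hd_A\dot P^{AB})$ is produced, it is converted via Lemma~\ref{P-operator-ids} into a combination of tangential operators $\hd^T_A$, powers of $I\csdot\hd$, and the curvature densities $K,\dot K,\ddot K,P^3,\dots$; this is precisely where Lemma~\ref{P-operator-ids} enters, as flagged above.

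For the low-order identities the computation is short and can be exhibited directly. For example, for~\nn{X1X} one has $X^A\,I\csdot\hd\,X_AT = X^AX_A(I\csdot\hd T) + X^A\big(I_A-\tfrac{2\sigma}{h}\hd_A\big)T$ by~\nn{IDcommaX}, and both terms vanish along $\Sigma$ (the first since $X$ is null, the second since $X^AI_A=\sigma$ and the remaining piece carries a $\sigma$). Identities~\nn{X2X},~\nn{X1I},~\nn{X2I},~\nn{I1X},~\nn{I2X},~\nn{X1D} and~\nn{X2D} follow the same template with one further commutation, the residual non-$\sigma$ contributions assembling into the stated weight-operator or $K$ terms; the book-keeping is cleanest if all numerical factors are carried as rational functions of $h=d+2w$ via the operator ${\sf h}$. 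The special values of $w$ at which some intermediate denominator vanishes are exactly those excluded by the hypothesis that $\hd$ and $\hd^T$ be well-defined, so the identities hold for generic $w$ and any remaining cases follow by the rational-function calculus.

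The essential difficulty is combinatorial rather than conceptual: by the time one reaches the $\hd^4$-level statements~\nn{X4X},~\nn{X4I},~\nn{I4X},~\nn{X3D},~\nn{X4D},~\nn{D2X} and~\nn{D2D}, each application of the commutators spawns on the order of a dozen terms — multiplications by $K$, $\dot K$, $\ddot K$, $K^2$, $P^3$, the contractions $P^A_CP^{CB}\hd^T_A\hd^T_B$, $P^{AB}(\hd_AP^C_B)\hd^T_C$, $W$-tractor and $[\hd_A,\hd_B]$ terms, and so on — all with coefficients that are ratios of degree-five polynomials in $h$; one must also be careful that an ${\mathcal O}(\sigma^{d-k})$ remainder is discarded only once no further differential operator can lower its order, which is automatic when the operators are peeled from the outside in. Tracking all these cancellations by hand is error-prone, so, exactly as for Lemma~\ref{P-operator-ids}, the higher-order cases are verified with the symbolic algebra system FORM~\cite{Jos}; the corresponding input files are documented in~\cite{FormFiles}, while the low-order identities that are actually used in closed form elsewhere are checked by hand as above.
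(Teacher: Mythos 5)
Your proposal is correct and follows essentially the same route as the paper. The paper's own proof is a single sentence—that the identities were verified sequentially with FORM, with the more complex identities building on the less complex ones—and your write-up supplies the underlying commutator-based strategy (Lemmas~\ref{Dcomm},~\ref{IDcomm}, Proposition~\ref{DD-comm}, Lemma~\ref{P-operator-ids}), a correct hand-worked check of~\nn{X1X}, and the same recourse to FORM for the $\hd^3$- and $\hd^4$-level cases.
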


\begin{proof}
This lemma was proved sequentially using FORM---generally, the more complex identities rely on the less complex ones.
\end{proof}

We are now ready to tackle the proof of the central result  given in Theorem~\ref{Big_Formaggio}].

\color{black}
\subsection{Boundary tractor formula for the extrinsic Paneitz operator}

Recall that the Paneitz operator~$P_4$ intrinsic to 
a conformal~$n$-manifold~$(\Sigma,\cc_\Sigma)$, acting on weight~$2-\frac n2\neq 0$ densities, can be expressed as
$$
P_4=\tfrac8{n-4}\,  \hat{D}^A\circ  P_2 \circ \hat {D}_A\, ,
$$
where~$P_2$ is the {\it Yamabe operator/conformal Laplacian} on weight $1-\frac n2$ tractors defined by $D^A T=-X_A P_2 T$ for $T\in \Gamma(\ct M[1-\frac n2])$; see for example~\cite{GOpar,GOpet}.
Therefore, to write the holographic formula of Theorem~\ref{Pk} for~$\PanE$  explicitly in terms of  hypersurface data, our plan is to convert the operator~$(I\csdot \hd)^4$ to the form
$$\PanE \stackrel\Sigma= \tfrac{8}{d-5} \hh \hat{ D}^{T\, A} \circ P_2^{\sss\Sigma\hookrightarrow M} \circ \hat{ D}^T_A + \text{lower derivative terms.}$$
Here the {\it extrinsically-coupled Yamabe operator}~$P_2^{\sss\Sigma\hookrightarrow M}$ is defined on tractors of weight~$\frac{3-d}2$ by the tangential operator~$I\csdot \hd^2$ (again see Theorem~\ref{Pk}). In the case that the operator $\PanE$  acts on sections of $\ct^\Phi \Sigma\big[\frac{5-d}2\big]$, Theorem~\ref{thesecondhorcrux}
can be used to convert tangential Thomas-$D$ operators to hypersurface ones. The result of this computation is given below.

\begin{theorem}\label{Big_Formaggio}
Let~$\sigma$ be a unit conformal defining density for~$\Sigma \hookrightarrow (M, \cc)$,where~$M$ has dimension~$d  \geq 8$.
Then, acting on sections of  $\Gamma(\ct^\Phi M[\tfrac{5-d}{2}]|_{\Sigma})$,
\begin{align*}
\PanE  \stackrel\Sigma=& \hh  \hh \tfrac{8}{d-5}\hh \hd^{T A}\circ
P_2^{\sss\Sigma\hookrightarrow M}
\circ
 \hd^T_{ A} \\
&+\Big(\tfrac{4(2d-11)}{d-5} L^{AC} L_C^B  
+ \tfrac{4(d-3)(d-6)}{d-5} F^{AB} 
- \tfrac{8}{d-5} W^{AB\, \sharp} \Big) \circ \hd^T_{A}\circ \hd^T_{B} \\
&+\Big( \tfrac{4(2d-11)}{d-5} L_{BC} (\hdb^B L^{CA}) 
- \tfrac{2(d^2 - 8d+17)}{(d-1)(d-2)(d-5)} (\hdb^A K) 
+ \tfrac{4(3d-16)}{d-5} N_B L_{CD} W^{BCDA} \\&\phantom{+\Big(Q}
- \tfrac{4(3d-16)}{d-5} N_B L^A_C W^{BC \sharp} 
+ \tfrac{4(d-4)}{d-5} N_B N^C (\hd_C {W}^{BA}{}_{\pdot\hh\pdot}    )^\sharp    \Big) \circ \hd^T_{A} \\
&+\tfrac{2d^4 - 21d^3 + 95d^2 - 200d + 152}{(d-1)(d-2)^2 (d-4)^2} K^2 
+ \tfrac{d-2}{(d-4)^2} L\hh \csdot \hh  J 
- \tfrac{2(d-2)}{(d-4)^2} L^4
+ \tfrac{2(2d^3 - 27d^2 + 118d - 172)}{(d-4)^2} L \csdot F \csdot L 
\\
&+ \tfrac{(3d-16)(d-3)^2}{d-4} F^2- \tfrac{2(d-5)(d-6)}{(d-4)^2} L^{AC} L^{BD} {\bar W}_{ABCD} 
+ \tfrac{(d-5)(d-8)}{(d-4) (d-7)}  (\hdb_A L_{BC}) (\hdb^A L^{BC}) \\&
+ 4 W_{N}{}^{B \sharp} \circ  W_{NB}{}^\sharp- 4 N_B L_{AC} \Big(\hd^A W^{BC}{}_{\pdot\hh \pdot} \Big)^\sharp\, .
\end{align*}
\end{theorem}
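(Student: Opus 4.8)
The plan is to start from the holographic formula $\PanE \stackrel\Sigma= (I\csdot\hd)^{4}|_{\Sigma}$ of Theorem~\ref{Pk} (the case $k=4$) and to rewrite the four nested Laplace--Robin operators in ``divergence'' form, imitating the intrinsic factorization $P_4=\tfrac{8}{n-4}\,\hd^{A}\!\circ P_2\circ \hd_{A}$ with the hypersurface dimension $n=d-1$ substituted (so $n-4$ becomes $d-5$). On the relevant weight $w=\tfrac{5-d}{2}$ one has $h:=d+2w=5$ and $d+2w-3=2$, so the operators $\hd$, $\hd^{T}$ and the extrinsic Yamabe operator $P_2^{\sss\Sigma\hookrightarrow M}$ (the tangential operator $I\csdot\hd^{2}$ on weight $\tfrac{3-d}{2}$ of Theorem~\ref{Pk}) are all non-degenerate; the standing hypothesis $d\geq 8$ ensures in addition that $\hdb$ is defined on the rigidity density $K$, that the fourth fundamental form and both the ambient and hypersurface $W$-tractors exist, and that the Gau\ss--Thomas formula of Theorem~\ref{thesecondhorcrux} applies. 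As in the proof of Lemma~\ref{fiatlux}, the relevant basis of hypersurface invariants is dimension-stable and all coefficients that appear are rational in $d$, so it suffices to establish the identity for generic large $d$.

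\emph{Step 1: reduction to tangential operators.} Write $(I\csdot\hd)^{4}=(I\csdot\hd)\circ P_2^{\sss\Sigma\hookrightarrow M}\circ(I\csdot\hd)$ on weight $\tfrac{5-d}{2}$, the middle factor being the tangential operator of Theorem~\ref{Pk}. Using Proposition~\ref{Dt-trans} to decompose each Thomas--$D$ operator as $\hd_{A}=\hd^{T}_{A}+I_{A}(I\csdot\hd)-\tfrac{X_{A}}{d+2w-3}(I\csdot\hd^{2})$, replace the two outer Laplace--Robin factors by tangential Thomas--$D$ operators. The discrepancy between $(I\csdot\hd)^{4}$ and $\tfrac{8}{d-5}\,\hd^{TA}\!\circ P_2^{\sss\Sigma\hookrightarrow M}\!\circ\hd^{T}_{A}$ is then a sum of $X$- and $I$-sandwiched strings of the form $X^{A}\!\circ (I\csdot\hd^{k})\!\circ\{X_{A},\,I_{A},\,\hd_{A}\}$, $I^{A}\!\circ (I\csdot\hd^{k})\!\circ X_{A}$ and $\hd_{A}\!\circ (I\csdot\hd^{2})\!\circ\{X^{A},\,\hd^{A}\}$ (with $k$ at most $4$), together with curvature commutators $[\hd_{A},\hd_{B}]$. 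The first kind are precisely the operator identities tabulated in Lemmas~\ref{P-operator-ids} and~\ref{TIDU} (themselves derived from the Leibniz-failure Proposition~\ref{leib-failure} via Lemmas~\ref{Dcomm} and~\ref{IDcomm}), while the commutators are traded for the ambient $W$-tractor by Proposition~\ref{DD-comm} and Lemma~\ref{DWcomm}. Substituting everything in produces an identity of the schematic shape $(I\csdot\hd)^{4}\stackrel\Sigma=\tfrac{8}{d-5}\hd^{TA}\!\circ P_2^{\sss\Sigma\hookrightarrow M}\!\circ\hd^{T}_{A}+(\text{tractor endomorphisms in }P_{AB},\dot P_{AB},\ddot P_{AB},W_{ABCD})^{\sharp}\circ\hd^{T}\circ\hd^{T}+(\cdots)^{\sharp}\circ\hd^{T}+(\text{weight }{-4}\text{ scalars in }K_{\rm e},\dot K_{\rm e},\ddot K_{\rm e})$, with every coefficient a rational function of $d$.

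\emph{Step 2: passage to the fundamental-form hierarchy and assembly.} Restricting to $\Sigma$, the tractor curvatures of Definition~\ref{tractor-defs} are rewritten in hypersurface terms by the tractor calculus of~\cite{BGW}: $P_{AB}|_{\Sigma}$, $\dot P_{AB}|_{\Sigma}$ and $\ddot P_{AB}|_{\Sigma}$ are expressed through the tractor second fundamental form $L_{AB}$, the Fialkow tractor $F_{AB}$, the tractor fourth fundamental form $J_{AB}$, the ambient and hypersurface $W$-tractors, the normal tractor $N$, and the rigidity density $K$ (together with $\hdb$-, $\hd^{T}$- and ambient $\hd$-derivatives of these), while $K_{\rm e}|_{\Sigma}=K$ and $\dot K_{\rm e}$, $\ddot K_{\rm e}$ reduce to hypersurface invariants of~\cite{BGW}. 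Any tangential Thomas--$D$ derivative of a fundamental form that remains is converted to an intrinsic $\hdb$-derivative by the Gau\ss--Thomas formula Theorem~\ref{thesecondhorcrux}, which replaces $\hd^{T}_{A}$ by $\hdb_{A}+\Gamma_{A}{}^{\sharp}+\cdots$ with $\Gamma_{ABC}$ as in Equation~\nn{Gamma}; this is what generates the algebraic terms $L^{AC}L_{C}^{B}$, $L\csdot F\csdot L$, $F^{2}$, $K^{2}$, $L\csdot J$, $L^{4}$, $L^{AC}L^{BD}\bar W_{ABCD}$, $W_{N}{}^{B\sharp}\circ W_{NB}{}^{\sharp}$, the $(\hdb_{A}L_{BC})(\hdb^{A}L^{BC})$ term, and the terms involving $\hdb K$ and ambient $\hd$-derivatives of the $W$-tractor in the stated formula. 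Substituting Step~2 into Step~1 and collecting like terms, every coefficient becomes a definite rational function of $d$ and the claimed formula for $\PanE$ follows; because this bookkeeping is large it is carried out, and the coefficient cancellations verified, with the symbolic algebra system FORM~\cite{Jos}, the scripts being recorded in~\cite{FormFiles}.

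\emph{Main obstacle.} The essential difficulty is not any individual identity but the combinatorial control of the expansion: each of the four Laplace--Robin factors spawns a cascade of $X$-, $I$- and curvature-corrections, and the fundamental-form substitutions of Step~2 multiply this still further, so that tracking the rational coefficients --- and in particular checking that the spurious poles in $d$ introduced by the intermediate formulae of Lemma~\ref{TIDU} all cancel --- is delicate. One must also maintain well-definedness along $\Sigma$ at every intermediate stage, which is exactly what the tangentiality of $\hd^{T}$ (Proposition~\ref{Dt-trans}) guarantees, and match the $\sigma$-tractor curvatures of Definition~\ref{tractor-defs} to the fundamental-form hierarchy of~\cite{BGW} correctly. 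The hypothesis $d\geq 8$ is what makes the fourth fundamental form, both $W$-tractors, the operator $\hdb$ on $K$ and Theorem~\ref{thesecondhorcrux} simultaneously available; the lower values of $d$ for which the subsequent corollaries are stated are then reached by the dimensional-continuation argument already used in Lemma~\ref{fiatlux}.
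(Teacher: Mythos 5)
Your proposal tracks the paper's own route in outline: start from the holographic $(I\csdot\hd)^4$ of Theorem~\ref{Pk}, push it into the form $\tfrac{8}{d-5}\hd^{TA}\circ P_2^{\sss\Sigma\hookrightarrow M}\circ\hd^T_A$ plus lower transverse-order corrections, evaluate those corrections using the operator identities of Lemmas~\ref{P-operator-ids} and~\ref{TIDU}, trade $\sigma$-tractor curvatures for the fundamental-form hierarchy via the Gau\ss--Thomas formula of Theorem~\ref{thesecondhorcrux}, and run the bookkeeping in FORM. The identification of the building blocks and the intended structure of the answer are correct.

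The genuine gap is in your treatment of poles. You fix the argument weight to $\tfrac{5-d}{2}$ (so $h=d+2w=5$ throughout) and assert that dimensional continuation in $d$ suffices. It does not: the decomposition $\hd_A=\hd^T_A+I_A(I\csdot\hd)-\tfrac{X_A}{d+2w'-3}\,I\csdot\hd^2$ from Proposition~\ref{Dt-trans} has a pole whenever the operand to the right has weight $w'$ with $d+2w'=3$, and exactly one Laplace--Robin factor below the starting weight one sits at $w'=\tfrac{3-d}{2}$, for which $d+2w'=3$ \emph{identically in $d$}. Varying $d$ cannot remove this pole. This is why the paper explicitly works at generic argument weight $w=(h-d)/2$ with $h$ a free parameter and takes the limit $h\to 5$ only at the end; it is an $h$-continuation, not a $d$-continuation, that renders the intermediate steps of the algorithm well-defined. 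Relatedly, the leftmost $\hd^T_A$ in the $\mathcal{W}^A\circ\hd^T_A\circ\hd^T_B$ terms of the stated formula acts at weight $\tfrac{3-d}{2}$ and is a priori meaningless; the paper makes sense of it only via the $\mathcal{W}^A\circ X_A=0$ clause of Proposition~\ref{Dt-trans}, and this must be invoked. Finally, your Step~1 is too quick: the outer factors in $(I\csdot\hd)\circ P_2^{\sss\Sigma\hookrightarrow M}\circ(I\csdot\hd)$ are $I\csdot\hd$, not bare $\hd_A$, so one must first migrate the $I$'s (via $I_{\rm ext}^{AB}=h^{AB}-I^A I^B$ and the commutators of Lemma~\ref{IDcomm}) before the $\hd^T$ decomposition can be applied; the overall factor $\tfrac{8}{d-5}$ is not obtained by analogy with the intrinsic identity but by collecting $I\csdot\hd^4$ on both sides and dividing by the resulting coefficient $\tfrac{(2h-9)(h+d-10)}{(h-3)(h-4)(h-9)}$ at $h=5$.
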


\medskip 
\noindent
The above result can in fact be profitably used 
in dimensions $d=5,6,7$ by a 
 dimensional continuation argument
because $(I \csdot \hd)^4$ is well-defined in all these dimensions, so by construction so too must be the operator appearing in the theorem.
 The  $d = 5$ case is clearly the most complicated of these continuations, and has been performed in the cases when $\PanE$ acts on
scalars, the normal tractor, and standard tractors given by $\hd^T$ of a scalar density; see Corollary~\ref{forIamamerecorollary}, Theorem~\ref{P4N} and Equation~\nn{throwback}. Dimensions $d=6,7$ are discussed below:

First consider  $d = 6$. While there are no explicit poles here, since the tractor
$$\hd_A W \stackrel g= -2 Y_A W + Z_A^a \nabla_a W - \tfrac{1}{d-6} X_A (\Delta W - 2 J^g W)\, ,$$
it actually has a pole at $d = 6$. However, note that in the theorem above, the tractor $\hd_A W$ only appears when contracted  with $N^A$ or $L^{AB}$. In both cases, the pole can be eliminated by first working in an arbitrary dimension, there noting that $N^A X_A = 0 = L^{AB} X_A$, and then continuing down to six dimensions.

\smallskip

Now consider the case where $d = 7$, which has  one explicit pole with residue $(\hdb L)^2$, and an implicit one:
$$\hdb_A K \stackrel g= -2 \bar{Y}_A K + \bar{Z}_A^a \bn_a K - \tfrac{1}{d-7} X_A (\bar{\Delta} K - 2 \bar{J}^{\bar g} K)\,.$$
However, note that in $d = 7$,
$$(\hdb L)^2 \stackrel g= \tfrac{1}{2} \big(\bar{\Delta}  -2 \bar{J}^{\bar g}\big) K= \tfrac12 \bar \square_Y  K\,,$$
and acting on sections of $\Gamma(\ce M[\tfrac{5-d}{2}])|_{\Sigma}$,
$$(\hdb^A K) \hd^T_A \stackrel g= \tfrac{d-5}{2(d-7)} (\bar{\Delta}  - 2 \bar{J}^{\bar g} )K + \text{regular}\,,$$
where ``regular'' stands for terms that are regular in dimension $d = 7$.
Therefore, away from $d=7$
$$
-\tfrac{2(d^2 - 8d + 17)}{(d-1)(d-2)(d-5)} (\hdb^A K) \hd^T_A + \tfrac{(d-5)(d-8)}{(d-4)(d-7)} (\hdb L)^2 = \tfrac{d^3-7d^2+8d+8}{2(d-1)(d-2)(d-4)} (\bar{\Delta}  - 2 \bar{J})K + \text{regular}\,.
$$
Thus, the operator $\PanE$  can indeed be continued to seven dimensions.

\medskip

Our next task is to prove the central Theorem~\ref{Big_Formaggio}. Note that the algorithm presented below can be used to decompose 
quite general tractor operators into tangential and 
higher transverse order pieces, the latter captured by  powers of the $I\csdot \hd $ operator.

\begin{proof}[Proof of Theorem~\ref{Big_Formaggio}]
As dictated by Theorem~\ref{Pk}, we begin with the operator~$I\csdot \hd^4$, remembering that we will eventually restrict to the hypersurface~$\Sigma$. Also, we are ultimately interested in this operator acting on tractors of weight~$\frac{5-d}2$. However, to begin with we will take the weight to be arbitrary, equal to~$(h-d)/2$, and such that denominators of the form~$h-k$ for certain~$k$  are avoided. Later we will employ a weight and dimension continuation argument. Note that all appearances of the operator~$\hat D$ in what follows are in fact well-defined even when~$h=5$. When problematic poles in the parameter~$h$ appear, we will draw the reader's attention to how these are handled.

Our strategy is to perform a series of manipulations  converting the operator~$I\csdot \hd^4$ to the operator 
~$\hat D^T_A\circ I\csdot \hd^2 \circ \hd^{T\hh A}$ plus terms of lower transverse order. 
The transverse order of an operator 
measures the maximal number of normal derivatives; a detailed definition may be found  in~\cite{BGW}.
The first step is to note that 
~$I\csdot \hd  =\hd_A \circ I^A$ (this follows by contracting the last identity in Lemma~\ref{Dcomm} with the tractor metric and the fact that~$P_A{}^A= 0 = X_A P^{AB}$). Thus
$$
I\csdot \hd^4=\hd_A \circ I^A I\csdot \hd^2\circ I^B  \hd_B\, .
$$
We would like to trade the explicit appearances of scale tractors~$I^A$ and~$I^B$ in the above display for an extension of the tractor first fundamental form~$I^{AB}_{\rm ext}:=h^{AB} - I^A I^B$. 
For that we must first 
 bring~$I^A$ and~$I^B$ together using Equation~\nn{ID,I}, which gives
\begin{eqnarray}
I\csdot \hd^4&=& \phantom{-}\hd_A \circ I\csdot \hd\circ
I^A I^B
\circ
I\csdot \hd
 \circ \hd_B+ {\mathcal R}_1\nonumber
 \\[1mm]
 &=&
  -\hd_A \circ I\csdot \hd\circ
I^{AB}_{\rm ext}
\circ
I\csdot \hd
 \circ \hd_B+ 
{\mathcal R}_2
 \, .\label{theabovedisplay}
\end{eqnarray}
Here 
$ {\mathcal R}_1:=
\hd_A \circ [I^A,I\csdot \hd] \circ
I\csdot \hd
\circ I^B
\hd_B
+
\hd_A \circ I\csdot \hd \circ I^A\circ
[I\csdot \hd
, I^B]\circ
\hd_B
$
has transverse order no more than three---see Equation~\nn{ID,I}. 
The second remainder term~${\mathcal R}_2:={\mathcal R}_1+ 
 \hd_A \circ I\csdot \hd^2
 \circ \hd^A$ also has transverse order no more than three which can be verified using the identity~$\hd_A \hd^A=0$ and 
Proposition~\nn{DD-comm}, which shows that the commutator of a pair of Thomas-$D$ operators has transverse order one.
Later, to simplify~$\mathcal{R}_2$, we will apply Equation~\nn{D2D}.
We will also handle  lower transverse order remainder terms later.

We  employ the identity
$$
I_{\rm ext}^{AD}=
I^{AB}_{\rm ext}h_{BC}I^{CD}_{\rm ext}+ \mathcal{O}(\sigma^d)\, ,
$$
to produce a pair of extensions of the tractor first fundamental form. 
(Also observe  from Equation~\nn{DT} that 
$I_{AB} \hd^B$ is very nearly~$\hd_A^T$.)
We then use 
Equation~\nn{ID,I} and Lemma~\ref{Dcomm}
to rewrite Equation~\nn{theabovedisplay} as
$$
I\csdot \hd^4= -I_{\rm ext}^{AB} \hat D_B \circ 
I\csdot \hd^2
\circ
 I^{\rm ext}_{AC}\hd^C+{
\mathcal R }_3\, .$$
Similar arguments to above show that the latest remainder
${
\mathcal R }_3$ still has transverse order at most three.
Using~Equation~\nn{DT}, note that
$\hd^T_A = I_{AB}^{\rm ext} \hd^B
+\frac{1}{d+2w-3}\, X_A I\csdot \hd^2 
$.
Note that we encounter no poles applying this identity to the above display when~$h=5$. This maneuver produces
\begin{multline}\label{noidea}
I\csdot \hd^4=-\hat D^T_A\circ I\csdot \hd^2 \circ \hd^{T\hh A}
+\tfrac1{h-3}
\hat D^T_A\circ I\csdot \hd^2 \circ X_A I\csdot \hd^2 
\\
+\tfrac1{h-9}X_A I\csdot \hd^4 \circ  \hd^{T\hh A}
-
\tfrac{1}{(h-3)(h-9)}
X_A I\csdot \hd^4  \circ X^A I\csdot \hd^2 
+{\mathcal R }_3\, .
\end{multline}
Applying identities from Lemma~\ref{TIDU}, for the second, third, and fourth terms on the right hand side above, we find 
\begin{align*}
\hat X^A I\csdot \hd^4 \circ D_A^T &=
\tfrac{h(h-7)(h-d-8)}{2(h-3)(h-4)} \, I\csdot \hd^4+\cdots \, ,\\
\hat D^T_A \circ I\csdot \hd^2 \circ X^A I \csdot \hd^2 &= 
\tfrac{(h-5)(h-6)(h+d-10)}{2(h-4)(h-9)}\, I\csdot \hd^4+\cdots\, , \\
X_A I \csdot \hd^4 \circ X^A I \csdot \hd^2 &= -\tfrac{6(h-d-8)}{h-4} I \csdot \hd^4 + \cdots \,,
\end{align*}
where $\cdots$ represents  lower transverse order terms. 
Then, after collecting all the terms in Equation~\nn{noidea} containing~$I\csdot \hd^4$ on the left hand side, we have
$$
\tfrac{(2h-9)(h+d-10)}{(h-3)(h-4)(h-9)}\, 
I\csdot \hd^4=- \hat D^T_A\circ I\csdot \hd^2 \circ \hd^{T\hh A}
+{\mathcal R }_4\, .
$$
When~$h=5$, the coefficient on the left hand side is~$-\frac{d-5}8$. Moreover, at that value of $h$  the operator ~$I\csdot  \hd^2$ in the first term on the right hand side acts on tractors of
weight~$1-\frac{d-1}2$, in which case it is tangential and equals~$P_2^{\Sigma\hookrightarrow M}$.
Therefore we have established that
$$
\PanE  \stackrel\Sigma=  \hh \tfrac{8}{d-5}\hh \big(\hd^{T A}_{\sss\Sigma} \circ
P_2^{\sss\Sigma\hookrightarrow M}
\circ
 \hd^T_{{\sss\Sigma}\hh A} -{\mathcal R}\big)\, .
$$
It remains to compute
the operator~${\mathcal R}$  by evaluating~${\mathcal R}_4$ along~$\Sigma$ in the limit~$h\to 5$. 

 
 We must therefore now discuss the  poles in~${\mathcal R}_4$. The issue is
 that  we cannot use Equation~\nn{DT} to convert  the operator~$\hd$ to~$\hd^T$ when acting on tractors of weight~$1-\frac{d-1}2$.  However, since we know that the limit~$h\to 5$ of the operator~$I\csdot \hd^4$ is well-defined, we may employ Equation~\nn{DT} at general weights and apply a limiting procedure at the end of our calculations.
 
 \medskip

Returning to~${\mathcal R}$, we first   apply Lemma~\ref{D2D} to simplify the term~$\hd_A I \csdot \hd^2 \hd^A$ in~$\mathcal{R}_4$.  To compute~$\mathcal{R}_4$, we employ an
algorithm whose starting point~${\mathcal R}_4$ is an operator of transverse order no more than three,  that acts on an arbitrary weight tractors, and is  evaluated along~$\Sigma$.
Moreover,~${\mathcal R}_4$ is expressed as a sum of words (each of which has transverse order no more than three) composed of operator-valued letters in the alphabet given by 
the scale~$\sigma$, the scale tractor~$I$, the canonical tractor~$X$, the Thomas-$D$ operator, the~$W$ tractor,  the Thomas-$D$ operator acting on any of the other letters (possibly multiple times), and rational functions of~${\sf h}$. Note that 
the tractor identities derived so far can be used to simplify these words, \textit{e.g.},~$X^A \circ (\hd_A I_B) = 0 
$. 
Our algorithm  manipulates  such words and letters. We also introduce the distinguished letter
$$
\hat y:=-I\csdot \hd\, .
$$
The aim of the algorithm is to iteratively convert any word of transverse order~$\ell$ into the form $${\sf Op}\circ {\hat y}^\ell \circ f({\sf h})+\cdots\, ,$$ where the terms~$\cdots$ have transverse order lower than~$\ell$ and $f({\sf h})$ is some rational function of the  operator ${\sf h}$. Here~${\sf Op}$ is some tangential operator that may involve an additional  letter~$\hd^T\!$. Let us first  sketch main ideas of the algorithm:

Step 0 of the algorithm takes any word containing rational functions of~${\sf h}$
 and rewrites those words by shifting these operators to the right end of the word. Then, it applies a simplification-type procedure. While in principle, this simplification is not necessary for the algorithm to achieve the goal desired, it significantly reduces computational complexity in the implementation in the attached documentation~\cite{FormFiles}. This step will be repeated after each of the following steps. 
 
 Step 1 takes any word ending in $f_1({\sf h})$ and rewrites it in the order 
$$U^{\Theta_1}_1 \cdots \circ T^{\Phi_1 \sharp}_1 \cdots \circ \hd^T_{A_1} \cdots \circ \hd_{B_1} \cdots \circ {\hat y}^{\ell} \circ f_2({\sf h})\,,$$
for~$\{U^{\Theta_i}_i\}$ some set of multiplicative letters (acting by tensor multiplication by a tractor),~$\{T^{\Phi_j \sharp}_j\}$ some set of tractor-valued tractor-endo\-morphism letters, and~$f_2({\sf h})$ some possibly new rational function of the letter ${\sf h}$. 

Step 2 prepares to combine pairs of letters~$I^A$ and~$\hd_A$ into~$-{\hat y}$ by commuting the corresponding~$I$'s to the right of the multiplicative letters, the tractor-valued tractor endomorphisms, and the tangential Thomas-$D$ operators. That is, words containing~$I^A$ and~$\hd_A$ are manipulated to take  the form
$$U^{\Theta_1}_1 \cdots \circ T^{\Phi_1 \sharp}_1 \cdots \circ \hd^T_{A_1} \cdots \circ I^{B_k} \circ \hd_{B_1} \cdots \hd_{B_k} \cdots \circ {\hat y}^{\ell} \circ f({\sf h})\,.$$\

Step 3  applies the tractor lemmas above to push every~$I^{B_k}$ past Thomas-$D$ operators with different indices until it is left-adjacent to its corresponding~$\hd_{B_k}$, and then combines these terms to form~$-{\hat y}$. 

Step 4 reapplies Step 1 (so that this newly formed~${\hat y}$ letter is commuted to the right), leaving us with words of the form
$$U^{\Theta_1}_1 \cdots \circ T^{\Phi_1 \sharp}_1 \cdots \circ \hd^T_{A_1} \cdots \circ \hd_{B_1} \cdots \circ {\hat y}^{\ell} \circ f({\sf h})\,,$$
where no letter~$U^{\Theta_i}_i$ is a letter~$I^{B_i}$ with corresponding letter~$\hd_{B_i}$. 

Step 5 rewrites~$\hd_{B_1}$ (which by the previous step has no corresponding~$I^{B_1}$) in terms of the tangential Thomas-$D$ operator,~$\hd^T_{B_1}$. 

Step 6  repeats the previous five steps so long as any letters~$\hd$ remain.
The output of the algorithm is a linear combination of words of the form
$$U^{\Theta_1}_1 \cdots \circ T^{\Phi_1 \sharp}_1 \cdots \circ \hd^T_{A_1} \cdots \circ I \csdot \hd^{\ell} \circ f({\sf h})\,.$$

We now present the algorithm in full detail.

\begin{description}
\item[Step 0] $\phantom{+}$

\begin{description}
\item[Step 0a]
For every letter pair of an operator~$f({\sf h})$ and some tractor-valued operator~$T: \Gamma(\ct^\Phi M[w])$ $\rightarrow$ $\Gamma(\ct^\Theta M[w'])$ with ``operator weight'' $w'-w$, replace~$f({\sf h}) \circ T^\Theta_\Phi$  by~$T^\Theta_\Phi \circ f({\sf h}+2(w'-w))$. 
%

\item[Step 0b]
For any word beginning with multiplicative letters, combine those letters to reduce complexity using the definitions found in~\ref{tractor-defs}, the Leibniz failure~\ref{leib-failure}, and Lemmas~\ref{Dcomm} and~\ref{IDcomm}. For example, one may write~$X_A(\hd^B \hd^C I^{A})=-P^{BC}$ or~$I_A P^{AB}=-\tfrac{1}{d-2} KX^B$.
\end{description}

\item[Step 1]
Repeat the following sub-steps until a full iteration leaves the expression unchanged ({\it i.e.}, ``repeat until termination''):

\begin{description}
\item[Step 1a]
Rewrite all two-letter pairs~$I^A \circ \hd_A$ as~$-{\hat y}$.

\item[Step 1b] Rewrite every instance of the letter combination~$[\hd_A, \hd_B]$ following  Proposition~\ref{DD-comm}.

\item[Step 1c]
Repeat until termination: For every letter of the form~$T^{\Phi \sharp}$ and every multiplicative letter~$U^\Theta \in \Gamma(\ct^\Theta M[w])$, rewrite  pairs~$T^{\Phi \sharp} \circ U^\Theta$ as
$$(T^{\Phi \sharp} U^\Theta) + U^\Theta \circ T^{\Phi \sharp}\,.$$

\item[Step 1d]
Repeat until termination: For some operators~${\sf Op}_1$,~${\sf Op}_2$, and~${\sf Op}_3$, apply Proposition~\ref{leib-failure} and Lemmas~\ref{IDcomm} and~\ref{DWcomm} to write all words of the form~${\sf Op}_1 \circ {\hat y} \circ {\sf Op}_2 \circ {\hat y}^\ell$ as 
$${\sf Op}_1 \circ {\sf Op}_2 \circ {\hat y}^{\ell +1} + {\sf Op}_3 \circ {\hat y}^{\ell}\, .$$
By construction~${\sf Op}_3$ has fewer~${\hat y}$'s (including pairs~$I^A \circ\hd_A$) in it than the operator~${\sf Op}_1 \circ {\sf Op}_2$. Then apply Step 0.

\item[Step 1e]
Repeat until termination: For any operators~${\sf Op}_1$,~${\sf Op}_2$ (where ${\sf Op}_2$ does not contain the letter $\hd$), and~${\sf Op}_3$, apply Proposition~\ref{leib-failure} and Lemmas~\ref{Dcomm} and~\ref{DWcomm} to write all words of the form~${\sf Op}_1 \circ \hd^{A_1} \circ {\sf Op}_2 \circ \hd^{A_2} \cdots \hd^{A_k} \circ {\hat y}^{\ell}$ as
$${\sf Op}_1 \circ {\sf Op}_2 \circ \hd^{A_1}\cdots \hd^{A_k} \circ {\hat y}^{\ell} + {\sf Op}_3 \circ \hd^{A_2} \cdots \hd^{A_k} \circ {\hat y}^{\ell}  \,.$$
By construction~${\sf Op}_3$ has no more operators~$\hd$ in it than the operator~${\sf Op}_1$. Then apply Step 0.

\item[Step 1f]
Repeat until termination: Use Proposition~\ref{leib-failure} and Lemmas~\ref{Dcomm} and~\ref{IDcomm}, for every combination of letters~$\hd^T \circ I$ to the left of every appearance of $\hd$ or ${\hat y}$, to faithfully replace it with
$$I \circ \hd^T + {\sf Op}_1^T \,.$$
By construction the operator~${\sf Op}^T_1$ is  tangential. Similarly, for every combination of letters~$\hd^T \circ X$, replace it with
$$X \circ \hd^T + {\sf Op}_2^T \, .$$
Again the  operator~${\sf Op}^T_2$ must be  tangential. Apply similar identities to letters of the form $\hd^T \circ U^\Phi$ where $U^\Phi$ is a multiplicative letter. Then apply Step 0.
%
\end{description}

\item[Step 2] Repeat until every word containing at least one pair~$I^A \circ {\sf Op} \circ \hd_A$ is written as~${\sf Op}' \circ I^{A_k} \circ \hd_{A_1} \cdots \hd_{A_k}$, where~${\sf Op}$ and~${\sf Op}'$ are some combination of letters:
\begin{description}
\item[Step 2a] For some operators~${\sf Op}_1$ and~${\sf Op}_2$,  and for every multiplicative letter~$U^\Theta \in \Gamma(\ct^\Theta M[w])$, rewrite every word of the form~${\sf Op}_1 \circ I^A \circ U^\Phi \circ {\sf Op}_2$ as
$${\sf Op}_1 \circ U^\Phi \circ I^A \circ {\sf Op}_2\,.$$

\item[Step 2b]  For some operators~${\sf Op}_1$,~${\sf Op}_2$, and~${\sf Op}_3$, and for each tractor-valued tractor-endomorphism letter~$T^{\Phi \sharp}$, rewrite every word of the form~${\sf Op}_1 \circ I^A \circ T^{\Phi \sharp} \circ {\sf Op}_2 \circ \hd_A \circ {\sf Op}_3$ as
$${\sf Op}_1 \circ T^{\Phi \sharp} \circ I^A \circ {\sf Op}_2 \circ \hd_A \circ {\sf Op}_3 - {\sf Op}_1 \circ T^{\Phi A}{}_B I^B \circ {\sf Op}_2 \circ \hd_A \circ {\sf Op}_3\,.$$

\item[Step 2c] For some operators~${\sf Op}_1$,~${\sf Op}_2$, and~${\sf Op}_3$, apply Lemmas~\ref{Dcomm} and~\ref{IDcomm} as well as the definition of~$\hd^T$ in Definition~\ref{tractor-defs} to rewrite every word of the form~${\sf Op}_1 \circ I^A \circ \hd^T \circ {\sf Op}_2 \circ \hd_A \circ {\sf Op}_3$ as
$${\sf Op}_1 \circ \hd^T \circ I^A \circ {\sf Op}_2 \circ \hd_A \circ {\sf Op}_3 + {\sf Op}'_A \circ {\sf Op}_2 \circ \hd^A \circ {\sf Op}_3\,,$$
for some~${\sf Op}'_A$ that does not contain the letter~$I^A$. 
This step is designed to only move $I$'s to the right when they can be contracted on to $\hd$'s.
Apply Step 0.

\end{description}

\item[Step 3]
Repeat until each word has one fewer (or zero) pair(s) of letters~$I^{A_i}$ and~$\hd_{A_i}$. For some operators~${\sf Op}_1$ and~${\sf Op}_2$, apply Proposition~\ref{leib-failure} and Lemma~\ref{Dcomm} to rewrite every word of the form~${\sf Op}_1 \circ I_{A_i}\circ \hd^{A_1} \cdots \hd^{A_i} \cdots \hd^{A_k} \circ {\hat y}^{\ell}$ as
$$-{\sf Op}_1 \circ  \hd^{A_1} \cdots {\hat y} \cdots \hd^{A_k} \circ {\hat y}^{\ell} + {\sf Op}_2 
 \circ {\hat y}^{\ell}\,,$$
where~${\sf Op}_2$ is some operator that does not contain~$I_{A_i}$. Apply Step 0.

\item[Step 4] Reapply Step 1.

\item[Step 5] In any given word, by virtue of Proposition~\ref{Dt-trans}, rewrite the left-most letter~$\hd_A$ as~$\hd^T_A -I_A\circ {\hat y} - X_A \circ {\hat y}^2\circ \tfrac{1}{h-3}$.

\item[Step 6] If any word contains the letter~$\hd$, repeat Steps 0 through 5.
\end{description}

The remainder of the calculation amounts to rewriting combinations of non-derivative letters
$$U_1^{\Theta_1} \cdots \circ T_1^{\Phi_1 \sharp} \cdots$$
in terms of hypersurface tractors via holographic formul\ae. This process is generally tedious and relies on dozens of identities arising from the Leibniz failure  (both on the hypersurface and in the ambient space), Lemma~\ref{Dcomm} (and its direct application to the hypersurface operator~$\hdb$), Lemma~\ref{IDcomm}, and the Gauss-Thomas  formula~\nn{thesecondhorcrux}. Some of these identities can be found in~\cite{BGW}. Finally, we can take the limit~$h \rightarrow 5$ to resolve~$\mathcal{R}$. This entire procedure 
was implemented 
using FORM in 
the file 

\smallskip
\begin{center}
\texttt{anc/General-tensor/Paneitz-tensor-algorithm2.frm}. 
\end{center}
\smallskip

\noindent
That the proposed algorithm here terminates requires
no proof, since we explicitly verified that six iterations suffices.
The result obtained from this computation is below:
\begin{align*}
-\tfrac{8}{d-5} {\mathcal R}
  =
&+\Big(\tfrac{4(2d-11)}{d-5} L^{AC} L_C^B  
+ \tfrac{4(d-3)(d-6)}{d-5} F^{AB} 
- \tfrac{8}{d-5} W^{AB\, \sharp} \Big) \circ \hd^T_{A}\circ \hd^T_{B} \\
&+\Big( \tfrac{4(2d-11)}{d-5} L_{BC} (\hdb^B L^{CA}) 
- \tfrac{2(d^2 - 8d+17)}{(d-1)(d-2)(d-5)} (\hdb^A K) 
+ \tfrac{4(3d-16)}{d-5} N_B L_{CD} W^{BCDA} \\&\phantom{+\Big(Q}
- \tfrac{4(3d-16)}{d-5} N_B L^A_C W^{BC \sharp} 
+ \tfrac{4(d-4)}{d-5} N_B N^C (\hd_C {W}^{BA}{}_{\pdot\hh\pdot}    )^\sharp    \Big) \circ \hd^T_{A} \\
&+\tfrac{2d^4 - 21d^3 + 95d^2 - 200d + 152}{(d-1)(d-2)^2 (d-4)^2} K^2 
+ \tfrac{d-2}{(d-4)^2} L\hh \csdot \hh  J 
- \tfrac{2(d-2)}{(d-4)^2} L^4
+ \tfrac{2(2d^3 - 27d^2 + 118d - 172)}{(d-4)^2} L \csdot F \csdot L 
\\
&+ \tfrac{(3d-16)(d-3)^2}{d-4} F^2- \tfrac{2(d-5)(d-6)}{(d-4)^2} L^{AC} L^{BD} {\bar W}_{ABCD} 
+ \tfrac{(d-5)(d-8)}{(d-4) (d-7)}  (\hdb_A L_{BC}) (\hdb^A L^{BC}) \\&
+ 4 W_{N}{}^{B \sharp} \circ  W_{NB}{}^\sharp- 4 N_B L_{AC} \Big(\hd^A W^{BC}{}_{\pdot\hh \pdot} \Big)^\sharp\, .
\end{align*}
The above expression for $\mathcal{R}$ contains operators of the type $\mathcal{W}^A \circ \hd^T_A \circ \hd^T_B$ which, strictly, are not defined. However, because $\mathcal{W}^A \circ X_A = 0$, as discussed earlier, we have used 
 use the notation~$\hd^T$
 for the left-most Thomas-$D$ operator. This completes the proof.
\end{proof}

\subsection{Proofs of Results~\ref{forIamamerecorollary}, \ref{P4N},\ref{dots}, \ref{Voldemort}, and~\ref{Ronkonkoma}}

\begin{proof}[Proof of Corollary~\ref{forIamamerecorollary}]
The proof mainly amounts to an application of Theorem~\ref{Big_Formaggio}. 
Because the operator acts on scalar densities, we may use Theorem~\ref{thesecondhorcrux} to convert operators~$\hd^T$ to~$\hdb$ plus lower order terms. 
 The proof then splits into two separate computations. The first expresses~$\hdb^A \circ P_2^{\sss\Sigma\hookrightarrow M} \circ \hdb_A$ 
 in terms of Riemannian operators, while the second similarly 
 handles the subleading terms. The entire computation is carried out in FORM: the first computation can be found in the file
\begin{center} 
 \texttt{anc/Paneitz-scalar/DbID2Db-scalar.frm}
 \end{center}
and the second  in
\begin{center} \texttt{anc/Paneitz-scalar/Paneitz-scalar-Riemannian.frm}.
\end{center}
The final step uses Equation~\nn{fourth} to  rewrite $\IVo_{ab}$ in terms of $C_{\hat n (ab)}^\top$ in order that the result can be continued to $d = 5$ (and subsequently used in Theorem~\ref{BQC}). Well-definedness of the final result in $d=5$ can be established by inspection.  
\end{proof}

\begin{proof}[Proof of Theorem~\ref{P4N}]
As in Corollary~\ref{forIamamerecorollary}, this theorem is also an application of Theorem~\ref{Big_Formaggio}. 
Note that the normal tractor has weight zero in all dimensions, while the $\PanE$ acts on  tractors
of weight $\frac{5-d}{2}$. Moreover, to use Theorem~\ref{Big_Formaggio} in five dimensions we must compute in general~$d$ and then continue to $d=5$. Thus 
 we introduce a scalar density $\tau$ of weight $\frac{5-d}{2}$ and instead compute $\PanE (N^A \tau)$, continue to $d=5$, and thereafter set $\tau$ to unity. Similarly to the previous proof, handling the term  $\hd^{T \, B} \circ P_2^{\sss\Sigma\hookrightarrow M} \circ \hd^T_B (N^A \tau)$ term is challenging. It is computed in  the FORM file 
\begin{center} 
 \texttt{anc/Paneitz-N/DtID2Dt-N.frm}
 \end{center}
 The remainder of the computation is performed by the file
  \begin{center} 
 \texttt{anc/Paneitz-N/Paneitz-N-Riemannian.frm}
  \end{center}
  Finally, we again write $\IVo_{ab}$ in terms of $C_{\hat n (ab)}^\top$ so that the result is well-defined in $d = 5$.
\end{proof}

\begin{proof}[Proof of Lemma~\ref{dots}]
This calculation is a significant exercise in Riemannian hypersurface geometry. We use three facts: First, acting on a weight $w\neq 1-\frac d2$ tractor, the operator $I \csdot \hd$ is given by
$$I \csdot \hd \stackrel g= \nabla_n + w \rho - \tfrac{s}{d+2w-2} (\Delta + w J)\,.$$
Second, written in terms of the canonical extension $\IIo^{\rm e}_{ab}  = Z^A_a Z^B_b P_{AB}$, we have that $K^{\rm e} = (\IIo^{\rm e})^2$. Third, this canonical extension can be written as $\IIo^{\rm e}_{ab} = \nabla_a n_b + s P_{ab} + \rho g_{ab}$ (see the discussion in~\cite{BGW}). Using these facts, we can recast  $\dddot{K} = I \csdot \hd^3 K^{\rm e}$ as a Riemannian operator on tensors, at which point the problem reduces to standard (albeit lengthy) hypersurface calculations, carried out in the FORM program
\begin{equation*}
\texttt{anc/Riemannian-identities/Kddd.frm}
\end{equation*}

In order to obtain a manifestly invariant result, we note that the operator $L^{ab}$ given by
$$\Gamma(\odot^2_\circ T^*\Sigma[4-d]) \ni t \mapsto
\bn\csdot \bn\csdot t + \bar  P\csdot t \in
 \Gamma(\ce \Sigma[-d])\, ,$$
 described in~\cite{hypersurface_old} can act on three independent structures appearing  in $\dddot{K}$: namely,
$$\IIo^3_{(ab)\circ},\; 2 \IIo_{(a} \csdot \Fo_{b)\circ},\; K \IIo_{ab} \in \Gamma(\odot^2_\circ T^*\Sigma[4-d])\,.$$
The conversion (in five dimensions) of the result to expressions involving the weight $-1$ densities $L^{ab} \IIo^3_{(ab)\circ}$, $2 L^{ab} (\IIo_{(a} \csdot \Fo_{b)\circ})$, and $L^{ab} (K \IIo_{ab})$ 
is also carried out in the above FORM file. The tractor expression $W_{ABCN}^\top \hdb^A F^{BC}$ can be computed with standard tractor techniques and is also included in the above FORM computation.
\end{proof}

\begin{proof}[Proof of Theorem~\ref{Voldemort}]
In the proof of Theorem~\ref{P4N} above, we showed how to calculate $\PanE N^A$. As noted in Equation~\nn{FORMAGGINO}, to calculate the obstruction density ${\mathcal B}_\Sigma$, we should first compute $\PanE N^A - 3 I \csdot \hd^3 (X^A K_{\rm e})$ and in turn
$$\tfrac{1}{1440}  (\bar{D}_A \circ \top) \big(\PanE N^A - 3 I \csdot \hd^3 (X^A K_{\rm e}) \big) \,.$$
To do so, we start with $I \csdot \hd^3 (X^A K_{\rm e})$, and employ a combination of the  tractor and hypersurface calculi developed above. This is carried out in the FORM program
\begin{equation*}\texttt{anc/Paneitz-N/ID3xK.frm}\end{equation*} 
Note that this computation involves $\dddot{K}$ and thus requires Lemma~\ref{dots}. Combining this result with that for $\PanE N^A$, we can directly evaluate the obstruction; this is carried out in the FORM program
\begin{equation*}
\texttt{anc/Paneitz-N/Obstruction-d5.frm}
\end{equation*}
\end{proof}

\begin{proof}[Proof of Theorem~\ref{Ronkonkoma}]
Because $\Sigma$ is embedded as the conformal infinity of a Poincar\'e--Einstein structure $(M,\cc)$, from Theorem 1.8 of~\cite{BGW}, we have that $\IIo_{ab} = \Fo_{ab} = C_{\hat n (ab)}^\top = 0$. Consequently, from Theorem 1.3 of the same, $\hd^T \eqSig \hdb$. Note that another straightforward consequence is that $C_{a \hat n \hat n} \eqSig B_{\hat n a} \eqSig 0$ (see~\cite[Proposition 4.3]{Goal}), which implies that $W_{NABC} \eqSig 0$. Thus, to prove the theorem, from Equation~\nn{throwback}, it  suffices to compute $\hdb^A \circ \PanE \circ \hdb_A$. According to Theorem~\ref{Big_Formaggio}, in this case we have that 
\begin{align*}
\PanE \eqSig \tfrac{8}{d-5} \left(\hdb^A \circ P_2^{\sss\Sigma\hookrightarrow M} \circ \hdb_A - W^{AB \hash} \circ \hdb_A \circ \hdb_B + \tfrac{d-4}{2} N_B N^C (\hd_C W^{BA}{}_{\pdot \hh \pdot})^\hash \circ \hdb_A \right)\, .
\end{align*}
We  explicitly compute the operator~$\hdb^A \circ \PanE \circ \hdb_A$ using FORM to computing in dimension~$d$, and then continue to $d =5$. First, we compute
$$(\hdb^B \circ  P_2^{\sss\Sigma\hookrightarrow M} \circ \hdb_B \circ \hdb_A) f$$
for $f \in \Gamma(\ce \Sigma[\tfrac{7-d}{2}])$ using the FORM file
\begin{equation*}
\texttt{anc/P6/DbP2Db-Dbf.frm}
\end{equation*}
We take the resulting expression and feed it into the following file which completes the calculation:
$$
\texttt{anc/P6/P6-PE.frm} 
$$
This outputs the result displayed in the theorem.
\end{proof}

%
%
%
%
%
%

%
%
%

\section*{Acknowledgements}

A.W.~was also supported by a Simons Foundation Collaboration Grants for Mathematicians ID 317562 and 686131, and  thanks the University of Auckland for warm hospitality.
A.W. and A.R.G.
 gratefully acknowledge support from the Royal Society of New Zealand via Marsden Grants 16-UOA-051 and 19-UOA-008.

\end{document}